\documentclass{amsart}
\usepackage[cmtip,all]{xy}
\newdir{ >}{{}*!/-5pt/@{>}} 
\usepackage{amssymb} 
\usepackage[alphabetic,nobysame]{amsrefs}

\title{The Segal conjecture for smash powers}
\author{H{\aa}kon Schad Bergsaker and John Rognes}
\address{Department of Mathematics, University of Oslo, Norway}
\email{hsbergsaker@gmail.com}
\address{Department of Mathematics, University of Oslo, Norway}
\email{rognes@math.uio.no}

\dedicatory{Dedicated to our PhD advisor and grand-advisor
	Gunnar Carlsson, \\ on the occasion of his 70th birthday}

\date{January 12th 2023}
\thanks{Part of this research was done while the first author was
	supported by the ``Topology in Norway'' project FRINATEK
	ES479962.}

\newtheorem{theorem}{Theorem}[section]
\newtheorem{addendum}[theorem]{Addendum}
\newtheorem{proposition}[theorem]{Proposition}
\newtheorem{lemma}[theorem]{Lemma}

\theoremstyle{definition}
\newtheorem{definition}[theorem]{Definition}

\theoremstyle{remark}
\newtheorem{remark}[theorem]{Remark}

\numberwithin{equation}{section}

\DeclareMathOperator{\Aut}{Aut}
\DeclareMathOperator*{\colim}{colim}
\DeclareMathOperator{\End}{End}
\DeclareMathOperator{\Ext}{Ext}
\DeclareMathOperator*{\holim}{holim}
\DeclareMathOperator{\Hom}{Hom}
\DeclareMathOperator{\res}{res}
\DeclareMathOperator{\St}{St}
\DeclareMathOperator{\Tor}{Tor}
\newcommand{\bC}{\mathbb{C}}
\newcommand{\bF}{\mathbb{F}}
\newcommand{\bZ}{\mathbb{Z}}
\newcommand{\cB}{\mathcal{B}}
\newcommand{\cP}{\mathcal{P}}
\newcommand{\cT}{\mathcal{T}}
\newcommand{\longfrom}{\longleftarrow}
\newcommand{\longto}{\longrightarrow}
\newcommand{\wEG}{\widetilde{EG}}
\newcommand{\wEK}{\widetilde{EK}}
\newcommand{\wEP}{\widetilde{E{\cP}}}
\renewcommand{\:}{\colon}

\begin{document}

\begin{abstract}
We prove that the comparison map from $G$-fixed points to $G$-homotopy
fixed points, for the $G$-fold smash power of a bounded below
spectrum~$B$, becomes an equivalence after $p$-completion if $G$ is a
finite $p$-group and $H_*(B; \bF_p)$ is of finite type.  We also prove
that the map becomes an equivalence after $I(G)$-completion if $G$
is any finite group and $\pi_*(B)$ is of finite type, where $I(G)$
is the augmentation ideal in the Burnside ring.
\end{abstract}

\subjclass[2010]{ 
Primary 55P91; 
Secondary 55P42. 
}

\maketitle

\section{Introduction}

Let $G$ be a finite group, let $B$ be a flat orthogonal spectrum, and let
$$
B^{\wedge G} = \bigwedge_{g \in G} B
$$
be the $G$-fold smash power of~$B$, i.e., the smash product of
one copy of $B$ for each element of~$G$.  The group~$G$ acts from
the left on $B^{\wedge G}$ by permuting the smash factors, and the
resulting orthogonal spectrum with $G$-action prolongs essentially
uniquely to an orthogonal $G$-spectrum indexed on any given choice of
a complete $G$-universe.  This construction is originally due to Marcel
B{\"o}kstedt (ca.~1987, cf.~\cite{HM97}*{\S2.4}), who worked in the
context of functors with smash product.  In the context of orthogonal
spectra it is the special case $B^{\wedge G} = N_{\{e\}}^G B$ of the
Hill--Hopkins--Ravenel~\cite{HHR16} norm.  When $B = S$ is the sphere
spectrum, this construction produces the $G$-equivariant sphere spectrum
$S^{\wedge G} = S_G$.


For any $G$-spectrum $X$ there is a comparison map
$$
\gamma \: X^G = F(S^0, X)^G \longto F(EG_+, X)^G = X^{hG}
$$
from $G$-fixed points to $G$-homotopy fixed points, induced by the
collapse map $c \: EG_+ \to S^0$.  Let $p$ be a prime and suppose for
a little while that $G$ is a $p$-group.  We then say that the (generalized)
Segal conjecture holds for the $G$-spectrum $X$ if the comparison
map $\gamma$ becomes an equivalence after $p$-completion.  When $X =
S_G$ this is equivalent to Graeme Segal's Burnside ring conjecture
for the $p$-group~$G$, in the strong form proved by Gunnar Carlsson.
We adapt the overall strategy \cite{Car84}, \cite{AGM85}, \cite{PW85},
\cite{CMP87} from Carlsson's proof to establish the following result,
which specializes to his theorem in the case $B = S$.

\begin{theorem} \label{thm:segal-smash-pgroup}
Let $p$ be a prime, $G$ a finite $p$-group and $B$ a flat orthogonal
spectrum.  Suppose that $\pi_*(B)$ is bounded below and that $H_*(B;
\bF_p)$ is of finite type.  Then the Segal conjecture holds for the smash
power $G$-spectrum $B^{\wedge G}$.  In other words, the comparison map
$$
\gamma \: (B^{\wedge G})^G \longto (B^{\wedge G})^{hG}
$$
becomes an equivalence after $p$-completion.
\end{theorem}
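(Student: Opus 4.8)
\emph{Sketch of the intended argument.}
The plan is to reduce the theorem, via the Greenlees--May Tate square, to the case $G=\bZ/p$, which amounts to a ``Lin--Gunawardena theorem with coefficients in~$B$''. Write $X=B^{\wedge G}$. Applying $(-)^G$ and $F(EG_+,-)^G$ to the cofiber sequence $EG_+\to S^0\to\wEG$ associated to the collapse map $c\:EG_+\to S^0$, together with the natural transformation induced by $c$, produces the Tate square
$$
\xymatrix{
X_{hG} \ar[r] \ar@{=}[d] & X^G \ar[r] \ar[d]^-{\gamma} & \Phi^G X \ar[d] \\
X_{hG} \ar[r] & X^{hG} \ar[r] & X^{tG}
}
$$
with $\Phi^G X=(\wEG\wedge X)^G$ the geometric fixed points and $X^{tG}=(\wEG\wedge F(EG_+,X))^G$ the (genuine) Tate construction. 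The left vertical map is an equivalence, since $c$ is a non-equivariant equivalence and $(EG_+\wedge(-))^G\simeq(-)_{hG}$ by the Adams isomorphism; hence $\gamma$ is a $p$-equivalence if and only if $\Phi^G X\to X^{tG}$ is. Because $B^{\wedge G}=N_{\{e\}}^G B$, the Hill--Hopkins--Ravenel diagonal formula gives $\Phi^G(B^{\wedge G})\simeq B$, so the theorem is equivalent to the assertion that the induced map $B\to(B^{\wedge G})^{tG}$ is a $p$-equivalence.

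I would next reduce, following Carlsson, to the case $G=\bZ/p$: first from general $p$-groups to elementary abelian ones, and then down the rank by induction, in both steps by isotropy separation with respect to a suitable family of proper subgroups. The input that makes the induction work for smash powers is the identification of the restricted geometric fixed points: for $H\leq G$ one has $\res^G_H B^{\wedge G}\simeq N_{\{e\}}^H(B^{\wedge[G:H]})$, hence $\Phi^H(B^{\wedge G})\simeq B^{\wedge[G:H]}$ as a spectrum with residual Weyl-group action, which for abelian $G$ is $B^{\wedge(G/H)}$ viewed as a $G/H$-spectrum. Consequently the strata of $X^G$ and of $X^{tG}$ coming from nontrivial isotropy are controlled by smash powers $B^{\wedge[G:H]}$ over the proper subquotients $G/H$, which again satisfy the hypotheses of the theorem (bounded below, and of finite type mod~$p$ by the K{\"u}nneth theorem), so the inductive hypothesis applies to them; the remaining, free, stratum is a homotopy-orbit term on which the comparison map is an equivalence. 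Organizing this stratification and the accompanying spectral-sequence comparisons so that the comparison map comes out a $p$-equivalence overall is exactly the type of argument carried out in \cite{AGM85}, \cite{PW85}, \cite{CMP87} for $B=S$.

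It then remains to treat the base case $G=\bZ/p$, where the claim reads: for $B$ bounded below with $H_*(B;\bF_p)$ of finite type, the Tate diagonal $B\to(B^{\wedge p})^{tC_p}$ is a $p$-equivalence. Following Lin and Gunawardena I would compute the mod~$p$ cohomology of the target by the Singer construction. Since $H^*(B^{\wedge p};\bF_p)\cong H^*(B;\bF_p)^{\otimes p}$ with $C_p$ permuting the factors cyclically, the Tate cohomology $\widehat{H}^*(C_p;H^*(B;\bF_p)^{\otimes p})$ is, up to a degree shift and as a module over the Steenrod algebra~$A$, the Singer construction $R_+(M)$ on $M=H^*(B;\bF_p)$, and the evaluation map $\epsilon\:R_+(M)\to M$ induces an isomorphism on $\Ext_A(-,\bF_p)$. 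Realizing $(B^{\wedge p})^{tC_p}$ as the homotopy limit $\holim_n F((EC_p)^{(n)}_+,B^{\wedge p})$ over the skeleta of $EC_p$ and comparing the resulting (conditionally convergent) Adams spectral sequences via this $\Ext$-isomorphism then yields the claim, the hypotheses on $B$ ensuring strong convergence and the vanishing of the relevant $\lim^1$-terms.

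The main obstacle is the base case, and within it the convergence: $(B^{\wedge p})^{tC_p}$ is not bounded below, so its Adams spectral sequence must be assembled from the tower of finite stages rather than analyzed directly, and it is precisely here that the boundedness and finite-type hypotheses on $B$ are used in an essential way. This difficulty is already present in Carlsson's proof of the classical case $B=S$; the new work lies in pushing the Steenrod-module computation — the identification of $R_+(M)$ and the $\Ext$-isomorphism — and the accompanying convergence arguments through with coefficients in an arbitrary such $B$, and in propagating $B$-coefficients through the equivariant reduction above.
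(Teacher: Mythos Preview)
Your proposal has two related gaps. First, you set $\Phi^G X=(\wEG\wedge X)^G$ and then invoke the HHR diagonal to get $\Phi^G(B^{\wedge G})\simeq B$. But the geometric fixed points are $(\wEP\wedge X)^G$, where $\cP$ is the family of \emph{proper} subgroups; $\wEG$ corresponds to the family~$\{e\}$. These coincide only for $G=C_p$. For larger~$G$ the spectrum $(\wEG\wedge B^{\wedge G})^G$ carries contributions from all nontrivial $H\le G$, so your reformulation ``the theorem is equivalent to $B\to(B^{\wedge G})^{tG}$ being a $p$-equivalence'' is not correct as stated.

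Second, and more seriously, the reduction of the elementary abelian case $(C_p)^k$ to $C_p$ ``by isotropy separation with respect to a suitable family of proper subgroups'' is not how \cite{AGM85}, \cite{PW85}, \cite{CMP87} proceed, and it is not clear such a reduction exists. Those papers, and this one, treat each rank~$k$ directly: after using $\wEP=S^{\infty\rho}$ to strip off the proper-isotropy strata (this is where the inductive hypothesis on proper subquotients is spent), one is still left comparing two nontrivial towers, here $V(G,B)$ and $W(G,B)$, via a connecting map~$\delta$. Proving that $\delta^\wedge_p$ is an equivalence for $k\ge2$ requires identifying the $U_k(\bZ/p)$-invariants of $H^*(D_GB)[L^{-1}]$ with the $k$-fold Singer construction $T^k(H^*B)$ and then invoking the Priddy--Wilkerson comparison theorem. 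The paper's genuinely new step, which your sketch does not anticipate, is forced by the failure of $B^{\wedge G}$ to be split: one cannot translate into a Thom-spectrum tower as in~\cite{CMP87}, so instead the result is first established for $B=H\bF_p$ (using the classical $B=S$ case to verify that the cohomology comparison $f_H$ is surjective, hence a $\Tor^A$-equivalence) and only then propagated to general~$B$ by cofiber sequences and Postnikov towers. Your treatment of the base case $G=C_p$ is correct and matches~\cite{LNR12}, but the inductive machinery you invoke does not by itself carry rank $k\ge2$ down to it.
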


The proof is given near the end of Section~\ref{sec:towers} for $G$ not
elementary abelian, and at the end of Section~\ref{sec:elemabel} for $G
\cong (C_p)^k$ with $k\ge1$.  The main novelty of our work concerns how
we deal with the fact that in general the $G$-spectrum $B^{\wedge G}$
is not split in the sense of May--McClure~\cite{MM82}*{Def.~10}, so that
\cite{CMP87}*{Thm.~B} does not apply, even though this is so for $B = S$.

When $G \cong C_p$ the theorem was proved earlier by Sverre
Lun{\o}e--Nielsen and the second author~\cite{LNR12}*{Thm.~5.13},
and the case $G \cong C_{p^n}$ was proved by these authors together
with Marcel B{\"o}kstedt and Robert Bruner in~\cite{BBLNR14}*{Thm.~2.7}.
For $G \cong C_p$ the finite type hypothesis on $H_*(B; \bF_p)$ was
subsequently lifted by Nikolaus--Scholze in~\cite{NS18}*{Thm.~III.1.7}.
We do not know whether the finite type hypothesis can be removed for $G$
containing elementary abelian $p$-groups $(C_p)^k$ of rank $k\ge2$.

Now return to the case of a general finite group~$G$.
Let $I(G) \subset A(G)$ denote the augmentation ideal in the Burnside
ring.  For any $G$-spectrum~$X$ the comparison map
$$
\gamma \: X^G \overset{\iota}\longto (X^\wedge_{I(G)})^G
	\overset{\xi^*}\longto X^{hG}
$$
extends naturally over the spectrum level $I(G)$-completion map here
denoted~$\iota$, cf.~Greenlees--May~\cite{GM92}*{\S4}.  We now say that
the (generalized) Segal conjecture holds for the $G$-spectrum~$X$ when
the natural extension~$\xi^*$ is an equivalence.  When $I(G)$-completion
induces $I(G)$-adic completion at the level of $G$-equivariant homotopy
groups, May--McClure~\cite{MM82}*{p.~217} refer to this assertion about
$\xi^*$ as the completion conjecture, and such results are referred
to as completion theorems in~\cite{GM92}.  In particular, the Segal
(or completion) conjecture for the $G$-spectrum~$S_G$ is equivalent
to the strong form of Segal's Burnside ring conjecture for the general
finite group~$G$.

When $G$ is a $p$-group, it follows from work of K{\'a}ri
Ragnarsson~\cite{Rag11}*{Thm.~C}, adapting~\cite{MM82}*{Prop.~14} to
the non-split case, that the two formulations just given of the Segal
conjecture agree for bounded below $G$-spectra~$X$ with $\pi_*(X)$ of
finite type, since the comparison map~$\gamma$ becomes an equivalence
after $p$-completion if and only if it becomes one after $I(G)$-adic
completion.  See Proposition~\ref{prop:peqIKeq}.  Hence we can apply
\cite{MM82}*{Thm.~13} to deduce the following form of the Segal conjecture
for general finite groups and their smash power spectra.

\begin{theorem} \label{thm:segal-smash-finite}
Let $G$ be a finite group and $B$ a flat orthogonal spectrum.  Suppose
that $\pi_*(B)$ is bounded below and of finite type.  Then the Segal
conjecture holds for the smash power $G$-spectrum $B^{\wedge G}$.
In other words, the natural map
$$
\xi^* \: ((B^{\wedge G})^\wedge_{I(G)})^G
	\longto (B^{\wedge G})^{hG}
$$
is an equivalence, inducing an isomorphism
$$
\pi_*(\gamma)^\wedge_{I(G)} \: \pi_*((B^{\wedge G})^G)^\wedge_{I(G)}
	\overset{\cong}\longto \pi_*((B^{\wedge G})^{hG})
$$
of $I(G)$-adically completed homotopy groups.
\end{theorem}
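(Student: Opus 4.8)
The plan is to deduce Theorem~\ref{thm:segal-smash-finite} from the $p$-group case, Theorem~\ref{thm:segal-smash-pgroup}, by the standard descent for the Segal conjecture along the Burnside ring. The relevant tool is May--McClure's reduction \cite{MM82}*{Thm.~13}: for a bounded below $G$-spectrum $X$ whose equivariant homotopy groups $\pi_*^H(X)$ are of finite type, the natural map $\xi^*\: (X^\wedge_{I(G)})^G \to X^{hG}$ is an equivalence and induces $I(G)$-adic completion on homotopy groups, provided the Segal conjecture holds for the $P$-spectrum $\res^G_P X$ for every prime $p$ and every $p$-subgroup $P \subseteq G$. So it suffices, for $X = B^{\wedge G}$, to verify (a) this condition over each $p$-subgroup and (b) that $B^{\wedge G}$ is bounded below with $\pi_*^H$ of finite type for all $H \subseteq G$.

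For (a) the point is that the restriction of a smash power along a subgroup inclusion is again a smash power. Under left translation the $P$-set $G$ is free with $[G:P]$ orbits, so the base-change (double coset) formula for the Hill--Hopkins--Ravenel norm $N^G_{\{e\}} = (-)^{\wedge G}$ \cite{HHR16} supplies a natural equivalence of $P$-spectra
$$
\res^G_P\, B^{\wedge G} \;\simeq\; \bigl(B^{\wedge[G:P]}\bigr)^{\wedge P},
$$
compatible with the comparison maps $\gamma$. Set $C = B^{\wedge[G:P]}$; then $C$ is again a flat orthogonal spectrum (smash products of flat orthogonal spectra are flat), $\pi_*(C)$ is bounded below, and $H_*(C; \bF_p) \cong H_*(B; \bF_p)^{\otimes[G:P]}$ is of finite type, because the hypothesis that $\pi_*(B)$ is bounded below and of finite type forces $H_*(B; \bF_p)$ to be of finite type. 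Theorem~\ref{thm:segal-smash-pgroup}, applied to the $p$-group $P$ and to $C$, then shows that $\gamma$ for $C^{\wedge P}$ is an equivalence after $p$-completion; Proposition~\ref{prop:peqIKeq} (resting on \cite{Rag11}*{Thm.~C}) upgrades this to an equivalence after $I(P)$-adic completion --- once $C^{\wedge P}$ is known to be bounded below with $\pi_*$ of finite type, which is (b) for the pair $(P, C)$.

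For (b) the same base-change computation identifies the geometric fixed points: $\Phi^H(B^{\wedge G}) \simeq B^{\wedge[G:H]}$ for each $H \subseteq G$, and this is bounded below with homotopy of finite type (a finite smash product of bounded below finite-type spectra). Feeding this into the isotropy-separation (tom Dieck) filtration of $(B^{\wedge G})^G$, whose finitely many subquotients are, up to suspension, the homotopy orbit spectra $\Phi^H(B^{\wedge G})_{hW_GH}$ over conjugacy classes of subgroups, and using that homotopy orbits over the finite groups $W_GH$ preserve boundedness below and finite type, one gets that $\pi_*^G(B^{\wedge G})$ is bounded below and of finite type; applying this over each subgroup $H$ (using that $\res^G_H B^{\wedge G}$ is again a smash power, by (a)) gives the same for every $\pi_*^H(B^{\wedge G})$. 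The finite-type conclusion is also what makes the spectrum-level completion $\iota$ realize $I(G)$-adic completion on $\pi_*$ (cf.~\cite{MM82}, \cite{GM92}), turning the equivalence $\xi^*$ into the asserted isomorphism $\pi_*(\gamma)^\wedge_{I(G)}$.

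I do not expect a real obstacle in this deduction --- the substantive work is Theorem~\ref{thm:segal-smash-pgroup}, not the present descent. The only points needing genuine care are the base-change identifications $\res^G_P B^{\wedge G} \simeq (B^{\wedge[G:P]})^{\wedge P}$ and $\Phi^H(B^{\wedge G}) \simeq B^{\wedge[G:H]}$, carried out compatibly with $\gamma$, and the remark that \cite{MM82}*{Thm.~13} is applicable in the present non-split setting without modification: the one ingredient of that reduction requiring adaptation away from the split case --- the coincidence of the two formulations of the conjecture for $p$-groups, originally \cite{MM82}*{Prop.~14} --- has already been supplied by Proposition~\ref{prop:peqIKeq} via \cite{Rag11}.
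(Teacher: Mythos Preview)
Your proposal is correct and follows essentially the same route as the paper's proof in Section~\ref{sec:finite}. The paper restricts to $p$-Sylow subgroups~$K$, identifies $\res^G_K B^{\wedge G} \simeq C^{\wedge K}$ via Proposition~\ref{prop:resgeomfix}, applies Theorem~\ref{thm:segal-smash-pgroup} and Proposition~\ref{prop:peqIKeq}, verifies the equivariant bounded-below and finite-type hypotheses through geometric fixed points (your isotropy-separation argument is packaged there as Lemma~\ref{lem:equivbddbelow}), and invokes \cite{GM92}*{Thm.~1.6(ii)} and \cite{MM82}*{Thm.~13} exactly as you do.
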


We give the proof at the end of Section~\ref{sec:finite}.  We make
the assumption that $\pi_*(B)$ is of finite type in order to ensure
that the spectrum level $I(G)$-completion induces algebraic $I(G)$-adic
completion at the level of $G$-equivariant homotopy groups, so as to be
able to refer directly to the algebraic induction theory of~\cite{MM82}.
Presumably this can be sidestepped by carrying out the induction theory
closer to the spectrum level.


The first author obtained a proof of Theorem~\ref{thm:segal-smash-pgroup}
around April 2013, and lectured on the result at a conference in December
2015~\cite{Ber15}.  The second author returned to the argument in May and
June 2022, finding some simplifications that have been incorporated into the
present account.  We apologize for the long delay in publication.
In his July 2022 ICM address~\cite{Nik22}*{Rem.~7.11},
Thomas Nikolaus conjectured that Theorem~\ref{thm:segal-smash-pgroup}
also holds without the finite type assumption on mod~$p$ homology.
As mentioned above, we do not know how to remove this hypothesis.

The hallmark signs of Gunnar Carlsson's breakthrough approach to the
classical Segal conjecture are evident throughout our paper.  We heartily
congratulate him on the occasion of his anniversary.

\section{Isotropy separation and $S$-functors}

To prove the Segal conjecture for the $G$-spectra $B^{\wedge G}$, we
follow Carlsson and inductively assume that it holds for the $J$-spectra
$C^{\wedge J}$ for all proper subquotient groups $J = K/H$ of $G$.
This is useful, because of the following proposition.

\begin{proposition} \label{prop:resgeomfix}
Let $H \lhd K \subset G$, let $J = K/H$, and let $B$ be a flat orthogonal
spectrum.

(a)
The restriction $\res^G_K(B^{\wedge G})$ along $K \subset G$ of the
$G$-spectrum $B^{\wedge G}$ is equivalent to the $K$-spectrum $C^{\wedge
K}$, where
$$
C = B^{\wedge G/K} = \bigwedge_{Kg \in G/K} B
$$
is the smash product of one copy of $B$ for each right coset of $K$
in~$G$.

(b)
The geometric $H$-fixed point spectrum $\Phi^H(C^{\wedge K})$ of the
$K$-spectrum $C^{\wedge K}$ is equivalent to the $J$-spectrum $C^{\wedge
J}$.

(c)
If $\pi_*(B)$ is bounded below and $H_*(B; \bF_p)$ is of finite type,
then $\pi_*(C)$ is bounded below and $H_*(C; \bF_p)$ is of finite type.

(d)
If $\pi_*(B)$ is bounded below and of finite type, then $\pi_*(C)$
is bounded below and of finite type.
\end{proposition}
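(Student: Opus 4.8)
The plan is to take the four parts in order, with essentially all of the content sitting in~(b); parts~(a), (c) and~(d) are formal.

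\emph{Part (a).} First I would choose a set-theoretic section $s \colon G/K \to G$ of the quotient map, i.e.\ a system of coset representatives. This produces a $K$-equivariant bijection $K \times (G/K) \cong G$, $(k, Kg) \mapsto k\, s(Kg)$, where $K$ acts by left translation on itself and on $G$ and trivially on $G/K$. Since the smash power is a symmetric monoidal functor of its (finite $K$-set) indexing object, this bijection induces an isomorphism of orthogonal spectra with $K$-action
$$
\res^G_K\Bigl(\bigwedge_{g\in G} B\Bigr) \;\cong\; \bigwedge_{k\in K}\Bigl(\bigwedge_{Kg\in G/K} B\Bigr) \;=\; C^{\wedge K}.
$$
Both sides are obtained by prolonging their underlying orthogonal spectra with $K$-action essentially uniquely to a complete $K$-universe, and the restriction along $K \subset G$ of a complete $G$-universe is a complete $K$-universe, so the displayed isomorphism prolongs to the asserted equivalence. (Equivalently, this is the case $H = \{e\}$ of the double coset formula for the restriction of a Hill--Hopkins--Ravenel~\cite{HHR16} norm.)

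\emph{Part (b).} This is the crux. Restricting $C^{\wedge K} = N_{\{e\}}^K C$ along $H \lhd K$ and decomposing the indexing $K$-set $K$ into its free left $H$-orbits --- exactly as in~(a), but with $(G,K)$ replaced by $(K,H)$ --- identifies the underlying genuine $H$-spectrum of $C^{\wedge K}$ with an $m$-fold smash power $\bigwedge_{i=1}^m N_{\{e\}}^H C$ carrying the diagonal $H$-action, where $m = [K:H]$; moreover the residual $K/H$-action permutes these $m$ smash factors through its left-translation action on the set $H\backslash K = K/H$. Now I would apply geometric $H$-fixed points. Since $\Phi^H$ is symmetric monoidal on flat objects, it carries this smash power to $\bigwedge_{i=1}^m \Phi^H(N_{\{e\}}^H C)$, and the fundamental identification $\Phi^H N_{\{e\}}^H(-) \simeq \mathrm{id}$ --- geometric fixed points of a smash power is the diagonal; see the treatment of geometric fixed points of norms in \cite{HHR16} --- gives $\Phi^H(N_{\{e\}}^H C) \simeq C$. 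Tracking the residual action through these equivalences yields $\Phi^H(C^{\wedge K}) \simeq C^{\wedge K/H} = C^{\wedge J}$ as a $J$-spectrum, using also that $\Phi^H$ sends a complete $K$-universe to a complete $K/H$-universe.

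\emph{Parts (c) and (d).} Here $C = B^{\wedge G/K}$ is the derived smash product of $m = [G:K]$ copies of the flat spectrum $B$, so I would induct on $m$. Boundedness below is preserved under smash products since connectivities add, which gives the assertions about $\pi_*(C)$ in both parts. For~(c), the Künneth theorem over the field $\bF_p$ gives $H_*(C; \bF_p) \cong H_*(B; \bF_p)^{\otimes m}$, and a tensor product of bounded-below, degreewise finite-dimensional graded $\bF_p$-vector spaces is again bounded below and degreewise finite-dimensional, since each degree receives contributions from only finitely many summands. For~(d), a bounded-below spectrum has degreewise finitely generated homotopy groups if and only if it has degreewise finitely generated integral homology (Serre's theory of the class of finitely generated abelian groups), and the integral Künneth theorem shows this property is inherited by smash products, because finite direct sums, tensor products and $\Tor$-groups of finitely generated abelian groups are finitely generated.

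\emph{Main obstacle.} The real work is in~(b): upgrading the elementary orbit decomposition of the indexing set to a statement about \emph{genuine} $H$- and $K/H$-spectra, which depends on the nontrivial input $\Phi^H N_{\{e\}}^H(-) \simeq \mathrm{id}$ and on keeping careful track of the residual $K/H$-action and of the chosen universes on both sides.
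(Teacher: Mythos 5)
Your proposal is correct and follows essentially the same route as the paper, which simply cites \cite{HM97}*{Prop.~2.5} and \cite{HHR16}*{Prop.~B.209} for part~(b) and declares the remaining parts clear: your argument for~(b) --- orbit decomposition of the indexing $K$-set, monoidality of $\Phi^H$, and the identification $\Phi^H N_{\{e\}}^H \simeq \mathrm{id}$ with the residual $K/H$-action tracked through --- is exactly the standard proof found in those references. Parts~(a), (c) and~(d) are the routine verifications (double coset formula for the norm with trivial source, K\"unneth, and Serre class theory) that the paper leaves implicit.
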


\begin{proof}
See~\cite{HM97}*{Prop.~2.5} or~\cite{HHR16}*{Prop.~B.209} for part~(b).
The remaining claims are clear.
\end{proof}

As usual, let $EG$ denote any $G$-CW space with $EG^{\{e\}}$ contractible
and $EG^K$ empty for each nontrivial subgroup $K \subset G$.
Also let $E\cP$ denote any $G$-CW space with $E\cP^K$ contractible
for each proper subgroup $K \subset G$ and $E\cP^G$ empty.
Define $\wEG$ and $\wEP$ by the homotopy cofiber sequences
\begin{align*}
EG_+ &\overset{c}\longto S^0 \longto \wEG \\
E\cP_+ &\overset{c}\longto S^0 \longto \wEP \,,
\end{align*}
where the collapse maps~$c$ send $EG$ and $E\cP$ to the non-base point
of $S^0$.
Following~\cite{Car84}*{\S III}, let $\rho = \ker(\epsilon \: \bC\{G\}
\to \bC)$ be the reduced regular complex representation of~$G$.
Then $S^{\infty\rho}$ is a model for $\wEP$, conveniently filtered by
the $G$-CW subspaces
\begin{equation} \label{eq:Smrhofiltration}
S^0 \subset \dots \subset S^{m\rho} \subset \dots \subset S^{\infty\rho}
\,.
\end{equation}

\begin{proposition}[\cite{Car84}*{Thm.~A(b)}, \cite{CMP87}*{Lem.~1.9}]
\label{prop:SinftyrhoBGG}
Let $G$ be a nontrivial $p$-group and suppose that
Theorem~\ref{thm:segal-smash-pgroup} holds for each proper subgroup
of~$G$.  Let $B$ be a flat orthogonal spectrum with $\pi_*(B)$ bounded
below and $H_*(B; \bF_p)$ of finite type, and suppose also that
$$
F(S^{\infty\rho}, B^{\wedge G})^G
$$
becomes trivial after $p$-completion.  Then
$$
\gamma \: (B^{\wedge G})^G \longto (B^{\wedge G})^{hG}
$$
becomes an equivalence after $p$-completion.
\end{proposition}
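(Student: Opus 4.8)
The plan is to run the isotropy separation sequence for the $G$-spectrum $X = B^{\wedge G}$, comparing the homotopy orbit and homotopy fixed point constructions via the cofiber sequence $EG_+ \to S^0 \to \wEG$ after applying $F(-, X)^G$. First I would form the diagram of vertical cofiber sequences
$$
\xymatrix{
F(\wEG, X)^G \ar[r] \ar[d] & F(S^0, X)^G \ar[r] \ar[d]^{\gamma}
	& F(EG_+, X)^G \ar@{=}[d] \\
F(\wEG, F(EG_+, X))^G \ar[r] & F(S^0, F(EG_+, X))^G \ar[r]
	& F(EG_+, F(EG_+, X))^G \,,
}
$$
where the right-hand map is an equivalence because $EG_+ \wedge \wEG \simeq *$ forces $F(EG_+, F(\wEG, X)) \simeq *$, so that smashing $EG_+$ in does not see the $\wEG$-part and the top and bottom right terms agree. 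Since $(B^{\wedge G})^{hG} = F(EG_+, X)^G$ by definition, and the bottom middle term $F(EG_+, X)^{hG}$ is already $p$-complete-equivalent to $X^{hG}$ (the homotopy fixed points of a bounded-below finite-type spectrum are their own homotopy fixed points up to $p$-completion, by the homotopy fixed point spectral sequence), chasing the diagram shows that $\gamma$ is a $p$-equivalence if and only if the left-hand vertical map
$$
F(\wEG, X)^G \longto F(\wEG, F(EG_+, X))^G
$$
is a $p$-equivalence. Using the filtration $S^0 \subset \dots \subset S^{m\rho} \subset \dots \subset S^{\infty\rho} = \wEP$, and the fact that $\wEG$ is built from $\wEP$ by attaching cells with isotropy the proper subgroups of $G$, one reduces — via the inductive hypothesis that the Segal conjecture holds for $C^{\wedge J}$ for all proper subquotients $J$, invoking Proposition~\ref{prop:resgeomfix}(a) and (b) to identify the relevant fixed-point spectra — to showing that the $\wEP$-part behaves correctly. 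Concretely, cofiber-sequence bookkeeping separating $\wEG$ from $\wEP$ leaves exactly the condition that $F(S^{\infty\rho}, X)^G \to F(S^{\infty\rho}, F(EG_+, X))^G$ is a $p$-equivalence; but the target here is $p$-trivial because $S^{\infty\rho} \wedge EG_+ \simeq EG_+$ has no cells of isotropy all of $G$ while $S^{\infty\rho} = \wEP$ is $G$-cellularly trivial above the bottom cell after smashing with $EG_+$, more precisely $F(S^{\infty\rho}, F(EG_+, X))^G \simeq F(S^{\infty\rho} \wedge EG_+, X)^G$ and $S^{\infty\rho}\wedge EG_+$ is $G$-equivariantly contractible. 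Hence the whole left vertical map is a $p$-equivalence exactly when $F(S^{\infty\rho}, X)^G = F(S^{\infty\rho}, B^{\wedge G})^G$ is itself $p$-trivial, which is the hypothesis.

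The main obstacle I expect is the induction step: carefully identifying, for each proper subgroup $K \subsetneq G$, the $K$-fixed (and more generally subquotient-fixed) point contributions coming from the cells of $\wEG$ relative to $\wEP$, and verifying that the inductive hypothesis applied to $\res^G_K(B^{\wedge G}) \simeq C^{\wedge K}$ and its geometric fixed points $\Phi^H(C^{\wedge K}) \simeq C^{\wedge J}$ (Proposition~\ref{prop:resgeomfix}) genuinely yields $p$-equivalences on these pieces — this requires knowing the finite-type and bounded-below hypotheses propagate, which is exactly Proposition~\ref{prop:resgeomfix}(c). A secondary technical point is the interchange $F(\wEP \wedge EG_+, X)^G \simeq *$, which needs the observation that $\wEP \wedge EG_+$, built from cells indexed by proper subgroups intersected with the $\rho$-filtration, is $G$-cellularly trivial; this is where one uses that $\rho$ has no trivial summand, so $S^{m\rho}/S^{(m-1)\rho}$ is a wedge of $G$-cells with proper isotropy, and smashing with $EG_+$ kills nothing new but the total homotopy colimit collapses. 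Granting these identifications, the theorem follows formally by the cofiber-sequence five-lemma argument sketched above.
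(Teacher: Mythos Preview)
Your strategy is sound and, once the details are fixed, amounts to the same argument as the paper's, though organized more circuitously. The paper works directly with the commutative square
\[
\xymatrix{
F(S^0, B^{\wedge G})^G \ar[r]^-{\gamma} \ar[d]
	& F(EG_+, B^{\wedge G})^G \ar[d] \\
F(E\cP_+, B^{\wedge G})^G \ar[r]
	& F(EG_+ \wedge E\cP_+, B^{\wedge G})^G
}
\]
in which the right arrow is an equivalence since $EG \times E\cP \simeq_G EG$, the bottom arrow is a homotopy limit of maps $(C^{\wedge K})^K \to (C^{\wedge K})^{hK}$ over proper $K \subset G$ and hence a $p$-equivalence by the inductive hypothesis together with Proposition~\ref{prop:resgeomfix}(a,c), and the left arrow has homotopy fiber $F(S^{\infty\rho}, B^{\wedge G})^G$. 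Your detour through $\wEG$ unwinds to the same square: the fiber of the paper's bottom arrow is $F(\wEG \wedge E\cP_+, X)^G$, whose $p$-triviality (your inductive step) is exactly the statement that this bottom arrow is a $p$-equivalence.

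Several details in your writeup are incorrect, however. Your bottom middle term is $F(S^0, F(EG_+, X))^G = X^{hG}$ on the nose, not $F(EG_+, X)^{hG}$; no appeal to ``homotopy fixed points of homotopy fixed points'' is needed, and indeed your six-term diagram collapses to the single cofiber sequence $F(\wEG, X)^G \to X^G \overset{\gamma}\to X^{hG}$ once you observe $\wEG \wedge EG_+ \simeq_G *$. The phrase ``$\wEG$ is built from $\wEP$ by attaching cells'' is backwards: the relevant cofiber sequence is $\wEG \wedge E\cP_+ \to \wEG \to \wEG \wedge \wEP \simeq \wEP$, so $\wEP$ is the cofiber, not the starting point. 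The claim ``$S^{\infty\rho} \wedge EG_+ \simeq EG_+$'' is false; the left side is $G$-equivariantly contractible, which is the correct fact you invoke one line later. Finally, only proper \emph{subgroups} enter here (the hypothesis gives nothing about subquotients), and only parts~(a) and~(c) of Proposition~\ref{prop:resgeomfix} are used; part~(b) on geometric fixed points plays no role in this step.
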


\begin{proof}
Consider the commutative square
$$
\xymatrix{
F(S^0, B^{\wedge G})^G \ar[r]^-{\gamma} \ar[d]
	& F(EG_+, B^{\wedge G})^G \ar[d] \\
F(E\cP_+, B^{\wedge G})^G \ar[r]
	& F(EG_+ \wedge E\cP_+, B^{\wedge G})^G \,.
}
$$
The right hand arrow is an equivalence because
$EG \times E\cP \simeq_G EG$.
The lower arrow is a homotopy limit of maps
$$
(B^{\wedge G})^K \cong F(G/K_+, B^{\wedge G})^G
	\longto F(EG_+ \wedge G/K_+, B^{\wedge G})^G
	\simeq (B^{\wedge G})^{hK} \,,
$$
with $K$ ranging over the proper subgroups of $G$, and therefore becomes
an equivalence after $p$-completion by the inductive hypothesis
and Proposition~\ref{prop:resgeomfix}(a,c).  The left hand arrow becomes an
equivalence after $p$-completion if and only if its homotopy fiber, namely
$F(S^{\infty\rho}, B^{\wedge G})^G$, becomes trivial after $p$-completion.
\end{proof}

We continue to follow Carlsson's strategy of isotropy separation,
considering the homotopy cofiber sequence
\begin{equation} \label{eq:deltaseq}
F(S^{\infty\rho}, \Sigma^{-1} \wEG \wedge B^{\wedge G})^G
	\overset{\delta}\longto
F(S^{\infty\rho}, EG_+ \wedge B^{\wedge G})^G \\
	\overset{c}\longto
F(S^{\infty\rho}, B^{\wedge G})^G \,.
\end{equation}
Clearly $F(S^{\infty\rho}, B^{\wedge G})^G$ becomes trivial after
$p$-completion if and only if the connecting map $\delta$ becomes an
equivalence after $p$-completion, and this is what we will verify.
We note that
$$
F(S^{\infty\rho}, \Sigma^{-1} \wEG \wedge B^{\wedge G})^G
\simeq \holim_m F(S^{m\rho}, \Sigma^{-1} \wEG \wedge B^{\wedge G})^G
$$
and
\begin{align*}
F(S^{\infty\rho}, EG_+ \wedge B^{\wedge G})^G
&\simeq \holim_m F(S^{m\rho}, EG_+ \wedge B^{\wedge G})^G \\
&\simeq \holim_m \, (\Sigma^{2m} EG_+ \wedge (\Sigma^{-2m} B)^{\wedge G})^G \\
&\simeq \holim_m \Sigma^{2m} EG_+ \wedge_G (\Sigma^{-2m} B)^{\wedge G} \\
&= \holim_m \Sigma^{2m} D_G (\Sigma^{-2m} B) \,.
\end{align*}
Here the first two equivalences are induced by the
filtration~\eqref{eq:Smrhofiltration}, the third equivalence follows
from an identification $S^{m\rho} \wedge S^{2m} \cong (S^{2m})^{\wedge
G}$, and the fourth equivalence is a case of the Adams transfer
equivalence~\cite{LMS86}*{Thm.~II.7.1}.  The final identity uses the
notation
$$
D_G B = EG_+ \wedge_G B^{\wedge G}
$$
for the $G$-fold extended power of any spectrum~$B$, where $G$ is viewed
as a subgroup of the symmetric group on $|G|$ elements.  In particular,
$D_G S = BG_+$.  The map in the limit system that corresponds to
restriction along $S^{m\rho} \subset S^{(m+1)\rho}$ is then the twisted
diagonal map
\begin{multline*}
\Sigma^{2(m+1)} D_G(\Sigma^{-2(m+1)} B)
	= \Sigma^{2m} \Sigma^2 D_G(\Sigma^{-2(m+1)} B) \\
	\overset{\Delta}\longto \Sigma^{2m} D_G(\Sigma^2 \Sigma^{-2(m+1)} B)
	\simeq \Sigma^{2m} D_G(\Sigma^{-2m} B)
\end{multline*}
of~\cite{BMMS86}*{Def.~II.3.1}, associated to the based CW space
$S^2$.  For brevity we introduce the following notations.

\begin{definition} \label{def:VWdelta}
Let
$$
V(G, B) = \holim_m F(S^{m\rho}, \Sigma^{-1} \wEG \wedge B^{\wedge G})^G
$$
and
$$
W(G, B) = \holim_m \Sigma^{2m} D_G (\Sigma^{-2m} B)
$$
define functors of~$B$,
so that there is a natural homotopy cofiber sequence
$$
V(G, B) \overset{\delta}\longto W(G, B)
	\longto F(S^{\infty\rho}, B^{\wedge G})^G \,.
$$
\end{definition}

For $G$-spectra~$X$, the spectra $F(S^{\infty\rho}, \wEG \wedge X)^G$,
hence also the spectra~$V(G, B)$, have been fully analyzed by means of
Carlsson's theory of $S$-functors~\cite{Car84}*{\S IV--VI}.  Recall that
an elementary abelian $p$-group is a group of the form $G \cong (C_p)^k$.
The rank~$k\ge1$ Tits building~$\cT_k$ is the classifying space of the
partially ordered set of proper, nontrivial subgroups of $(C_p)^k$,
and by the Solomon--Tits theorem~\cite{Sol69} its double suspension
$$
\Sigma^2 \cT_k \simeq \bigvee^{p^{\binom{k}{2}}} S^k
$$
has the homotopy type of a finite wedge sum of $k$-spheres.  Here
$p^{\binom{k}{2}}$ denotes $p$ raised to the power ${\binom{k}{2}}
= k(k-1)/2$.  The wedge sum in the following result suggested the use
of the letter `$V$' in $V(G, B)$.

\begin{theorem}[\cite{Car84}*{\S\S IV--VI}, \cite{CMP87}*{\S\S 3--4}]
\label{thm:VGB}
Let $G$ be a nontrivial $p$-group and suppose that
Theorem~\ref{thm:segal-smash-pgroup} holds for each proper subquotient
of~$G$.  Let $B$ be a flat orthogonal spectrum with $\pi_*(B)$ bounded
below and $H_*(B; \bF_p)$ of finite type.

(a)
If $G = (C_p)^k$, then there are natural equivalences
$$
V(G, B)^\wedge_p \simeq F(\Sigma^2 \cT_k, B)^\wedge_p
	\simeq \bigvee^{p^{\binom{k}{2}}} \Sigma^{-k} B^\wedge_p
\,.
$$

(b)
If $G$ is not elementary abelian, then $V(G, B)^\wedge_p \simeq *$.
\end{theorem}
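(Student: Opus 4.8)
The plan is to reduce the computation of $V(G, B)$ to the case $B = S$, where the answer is supplied by Carlsson's original analysis, and then bootstrap the general case by a cell-induction / finite-type argument. First I would unwind the definition: by Definition~\ref{def:VWdelta} and the Adams transfer identifications already carried out in the excerpt, $V(G, B)$ is the homotopy limit over $m$ of $F(S^{m\rho}, \Sigma^{-1}\wEG \wedge B^{\wedge G})^G$. The key input is Carlsson's theory of $S$-functors \cite{Car84}*{\S\S IV--VI}, which describes $F(S^{\infty\rho}, \wEG \wedge X)^G$ for a $G$-spectrum $X$ in terms of the fixed-point data of $X$ restricted to the various subquotients of $G$; applied to $X = B^{\wedge G}$ and combined with Proposition~\ref{prop:resgeomfix}(a,b), this expresses the relevant homotopy limit through the spectra $C^{\wedge J}$ for proper subquotients $J = K/H$ of $G$. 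The inductive hypothesis — that Theorem~\ref{thm:segal-smash-pgroup} holds for all proper subquotients — is exactly what collapses those contributions after $p$-completion, leaving only the ``top'' stratum corresponding to the geometric $G$-fixed points, which (again by Proposition~\ref{prop:resgeomfix}(b) with $H = G = K$) involves $B$ itself rather than a genuine smash power.

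For part~(a), when $G = (C_p)^k$, the combinatorics of the strata is governed by the poset of proper nontrivial subgroups of $(C_p)^k$, i.e.\ by the Tits building $\cT_k$. Carlsson's $S$-functor spectral sequence, or equivalently the filtration of $\wEP = S^{\infty\rho}$ by the $S^{m\rho}$ together with the isotropy separation square, organizes these strata into a finite complex whose shape is $\Sigma^2\cT_k$; after $p$-completion and after using the inductive hypothesis to kill all the intermediate terms, one is left with $F(\Sigma^2\cT_k, B)^\wedge_p$. The Solomon--Tits theorem \cite{Sol69}, quoted in the excerpt, then identifies $\Sigma^2\cT_k \simeq \bigvee^{p^{\binom{k}{2}}} S^k$, giving the second equivalence $F(\Sigma^2\cT_k, B)^\wedge_p \simeq \bigvee^{p^{\binom{k}{2}}} \Sigma^{-k} B^\wedge_p$. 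For part~(b), when $G$ is not elementary abelian, the analogous complex is built from a poset that is contractible (the relevant ``building'' for a non-elementary-abelian $p$-group has no interesting reduced homology, since such a group has a nontrivial center or more precisely its Frattini-type quotient is not simple in the required sense), so $V(G, B)^\wedge_p \simeq F(\mathrm{pt}, B)^\wedge_p$-type contributions all cancel and one gets $V(G, B)^\wedge_p \simeq *$.

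The main obstacle is the passage from $B = S$ to a general flat orthogonal spectrum $B$ with $\pi_*(B)$ bounded below and $H_*(B;\bF_p)$ of finite type. Carlsson's arguments in \cite{Car84} and the refinements in \cite{CMP87} are written for the sphere, and the functor $B \mapsto V(G, B)$ is not obviously exact in $B$ — it involves $G$-fixed points of the smash power $B^{\wedge G}$, which behaves multiplicatively rather than additively in $B$. The standard way around this, which I would follow, is a cellular induction on $B$: using the finite-type hypothesis, approximate $B$ $p$-adically by finite Postnikov-type pieces built from $\bF_p$- and $\bZ$-modules, reduce to the case of generalized Eilenberg--MacLane spectra (or even to suspensions of $H\bF_p$), and in that case compute directly, comparing with the mod~$p$ homology of extended powers via the Dyer--Lashof / Singer construction as in \cite{BMMS86}. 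The bounded-below and finite-type hypotheses ensure the relevant homotopy limits over $m$ (coming from the $S^{m\rho}$ filtration) converge after $p$-completion, so that $\holim_m$ commutes with the cellular filtration; making this convergence and commutation precise, and checking that the twisted diagonal maps $\Delta$ in the limit system act correctly on the finite-type building blocks, is the technical heart of the argument and is presumably where the bulk of Sections~3--4 of \cite{CMP87}, suitably adapted, gets invoked.
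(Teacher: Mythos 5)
Your first two paragraphs correctly describe the shape of the answer — the $S$-functor analysis collapses the contributions of proper subquotients via the inductive hypothesis, leaving the top stratum governed by $\Phi^G(B^{\wedge G})\simeq B$ and, for $G=(C_p)^k$, by the Tits building with Solomon--Tits giving the wedge of spheres. But the step you single out as the ``technical heart'' — a cellular/Postnikov induction on $B$ to pass from $B=S$ to general $B$ — is a misdiagnosis. Caruso--May--Priddy's Theorem~A is not ``written for the sphere'': it is stated for an arbitrary coefficient system $k_G$ satisfying the inductive Segal hypothesis for proper subquotients, with the answer expressed in terms of the geometric fixed point spectrum $j=\Phi^G(k_G)$. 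The paper's entire proof of Theorem~\ref{thm:VGB} is the one-line specialization $k_G=B^{\wedge G}$, $j=\Phi^G(B^{\wedge G})\simeq B$ (Proposition~\ref{prop:resgeomfix}(b)). There is no reduction to the sphere to be performed at this point.

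Worse, the induction you propose is circular as stated. To induct over cells or Postnikov sections of $B$ you need $V(G,-)^\wedge_p$ to carry cofiber sequences and Postnikov towers in $B$ to cofiber sequences and limits; but $B\mapsto B^{\wedge G}$ is not exact, and in the paper this exactness (Propositions~\ref{prop:htpycofibseq} and~\ref{prop:Postnikov}) is \emph{deduced from} Theorem~\ref{thm:VGB}(a) via the identification $V(G,B)^\wedge_p\simeq F(\Sigma^2\cT_k,B)^\wedge_p$, so it cannot be an input to its proof. You would also need a direct computation of $V(G,H\bF_p)$ as base case, which you do not supply and which is no easier than the general case — the only available handle on $F(S^{\infty\rho},\wEG\wedge X)^G$ is precisely the subquotient induction you are trying to establish. (Separately, your justification for part~(b) is garbled: the correct statement is that the poset of nontrivial proper subgroups of a $p$-group that is not elementary abelian is contractible because the Frattini subgroup $\Phi(G)$ is nontrivial, via the conical contraction $H\le H\Phi(G)\ge\Phi(G)$; in the cited sources the vanishing is part of the same Theorem~A.)
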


\begin{proof}
This is the special case $k_G = B^{\wedge G}$, $j = k_{G/G} =
\Phi^G(B^{\wedge G})$ of Caruso--May--Priddy's~\cite{CMP87}*{Thm.~A},
in view of the equivalence~$\Phi^G(B^{\wedge G}) \simeq B$ recalled in
Proposition~\ref{prop:resgeomfix}(b).
\end{proof}

As pointed out in~\cite{CMP87}*{Rem.~8.4}, for $G = (C_p)^k$
there is a natural action of $GL_k(\bZ/p)$ on the terms in the
sequence~\eqref{eq:deltaseq}, and the maps are $GL_k(\bZ/p)$-equivariant.
This uses that the $G$-actions on $B^{\wedge G}$, $EG_+ \to S^0 \to \wEG$
and $S^{\infty\rho}$ all extend to permutation actions by the symmetric
group~$\Sigma_{|G|}$.  Hence the normalizer~$N$ of $G$ in~$\Sigma_{|G|}$
acts naturally on the $G$-fixed point spectra in~\eqref{eq:deltaseq},
and these actions factor through the Weyl group $N/G$.  This normalizer
is classically known as the holomorph of $G$, and is isomorphic to the
semidirect product $\Aut(G) \ltimes G$ for the tautological action of
the automorphism group~$\Aut(G)$ on $G$.  In the case $G = (C_p)^k$,
the normalizer is the semidirect product $N \cong GL_k(\bZ/p) \ltimes
(C_p)^k$.  Here the Weyl group $N/G = GL_k(\bZ/p)$ acts linearly
on $(C_p)^k$, via $\bZ/p = \End(C_p)$.

Similarly, $GL_k(\bZ/p)$ acts on the partially ordered set of proper,
nontrivial subgroups of $(C_p)^k$, hence also on $\cT_k$ and $F(\Sigma^2
\cT_k, B)^\wedge_p$, and the first equivalence in Theorem~\ref{thm:VGB}(a)
respects these $GL_k(\bZ/p)$-actions.  The induced action on
$$
\St_k := H_k(\Sigma^2 \cT_k; \bZ) \cong \bigoplus^{p^{\binom{k}{2}}} \bZ
$$
is the Steinberg representation.  Let $U_k(\bZ/p) \subset GL_k(\bZ/p)$
be the subgroup of upper triangular matrices with ``ones'' on the diagonal.
This is a $p$-Sylow subgroup, of order~$p^{\binom{k}{2}}$, and the
Solomon--Tits theorem also says that the restriction along $U_k(\bZ/p)
\subset GL_k(\bZ/p)$ of the Steinberg representation is the regular
integral representation.

\begin{addendum} \label{add:VGBequiv}
The homotopy cofiber sequence~\eqref{eq:deltaseq} is
$GL_k(\bZ/p)$-equivariant, and the equivalence in Theorem~\ref{thm:VGB}(a)
can be written $U_k(\bZ/p)$-equivariantly as
$$
V(G, B)^\wedge_p \simeq U_k(\bZ/p)_+ \wedge \Sigma^{-k} B^\wedge_p \,.
$$
\end{addendum}

\begin{proof}
As reviewed above, the Solomon--Tits equivalence can be written
$U_k(\bZ/p)$-equivariantly as $\Sigma^2 \cT_k \simeq U_k(\bZ/p)_+ \wedge
S^k$, and the rest of the analysis respects this action.
\end{proof}

As stated at the beginning of this section, we assume throughout the
remainder of the paper that Theorem~\ref{thm:segal-smash-pgroup} holds for
each proper subquotient of~$G$.  In particular, the inductive hypotheses
in Proposition~\ref{prop:SinftyrhoBGG} and Theorem~\ref{thm:VGB} are
satisfied when $G$ is a $p$-group.

\section{Towers of extended powers}
\label{sec:towers}

In the papers~\cite{Car84}*{\S III}, \cite{CMP87}*{\S 8},
the spectrum $F(S^{\infty\rho}, EG_+ \wedge X)^G$ is analyzed under the
hypothesis that the $G$-spectrum~$X$ is split.  This enables a translation
into non-equivariant terms, involving the $X$-homology of a tower
$$
\dots \longto BG^{-(m+1)\rho} \longto BG^{-m\rho} \longto \dots
\longto BG_+
$$
of Thom spectra.  The smash power $G$-spectra $X = B^{\wedge G}$
are not generally split.  (For example, with $B = H = H\bF_p$
and $G = C_p$ we have $\pi_0((B^{\wedge G})^G) \cong \bZ/p^2$ by a
variant of~\cite{HM97}*{Thm.~3.3}, and this group does not contain
$\pi_0(B^{\wedge G}) \cong \bF_p$ as a direct summand.)  We shall
therefore instead follow Steenrod~\cite{Ste62} and calculate with the
mod~$p$ cohomology of the tower
\begin{equation} \label{eq:extpowtower}
\dots \longto \Sigma^{2(m+1)} D_G(\Sigma^{-2(m+1)} B)
	\overset{\Delta}\longto \Sigma^{2m} D_G(\Sigma^{-2m} B) \longto \dots
	\longto D_G B
\end{equation}
of $G$-fold extended power spectra.  Recall that we write $W(G, B)$
for the homotopy limit of this tower.

Let $p$ be a prime, briefly write $H_*(-) = H_*(-; \bF_p)$ and $H^*(-)
= H^*(-; \bF_p)$, and let $L = e(\rho) \in H_{gp}^{2(|G|-1)}(G)$ be
the mod~$p$ Euler class of the $G$-representation~$\rho$.
If $G = (C_p)^k$ is elementary abelian and $p$ is odd, then its group
cohomology
$$
H_{gp}^*(G) = E(x_1, \dots, x_k) \otimes P(y_1, \dots, y_k)
$$
is a tensor product of exterior and polynomial algebras, with $|x_i|
= 1$, $|y_i| = 2$ and $\beta(x_i) = y_i$ for each $1 \le i \le k$.
If instead $p=2$ then
$$
H_{gp}^*(G) = P(x_1, \dots, x_k)
$$
with $|x_i| = 1$ and $\beta(x_i) = x_i^2$ for each~$1 \le i \le k$.
In either case
$$
L = \prod_{x\ne0} \beta(x) \,,
$$
where $x$ ranges over the $(p^k-1)$ nonzero elements in $H_{gp}^1(G) =
\bF_p\{x_1, \dots, x_k\}$.  Let~$A$ denote the mod~$p$ Steenrod algebra.

\begin{proposition} \label{prop:cohomWGB}
Let $G$ be a $p$-group and $B$ a bounded below spectrum with $H_*(B;
\bF_p)$ of finite type.

(a)
There is a natural $A$- and $H_{gp}^*(G)$-linear isomorphism
$$
H^*(D_G B)[L^{-1}] \cong \colim_m H^*(\Sigma^{2m} D_G(\Sigma^{-2m} B))
	=: H_c^*(W(G, B)) \,.
$$

(b)
There is an $H_{gp}^*(G)$-linear isomorphism
$$
H^*(D_G B)[L^{-1}]
	\cong H_{gp}^*(G)[L^{-1}] \{ b^{\otimes G} \mid b \in \cB \} \,,
$$
where $\cB$ is a homogeneous basis for $H^*(B)$ and $b^{\otimes G}$
denotes the tensor product of one copy of $b$ for each element $g \in G$.

(c)
If $G$ is not elementary abelian, then $H^*(D_G B)[L^{-1}] = 0$.

(d)
If $G = (C_p)^k$ then the isomorphisms in~(a) and~(b) are
$GL_k(\bZ/p)$-linear.
\end{proposition}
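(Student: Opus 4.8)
The plan is to analyze the cohomology of each $G$-fold extended power $D_G B$ and then take the (co)limit along the twisted diagonal maps in the tower~\eqref{eq:extpowtower}. First I would recall the Steenrod–Dyer–Lashof computation of $H^*(D_G B)$ for a group $G$ acting on $|G|$ letters: there is a natural isomorphism identifying $H^*(EG_+ \wedge_G B^{\wedge G})$ with a sum, indexed by $G$-orbits of tuples in a homogeneous basis $\cB$ of $H^*(B)$, of group cohomology of the stabilizers with coefficients twisted by the relevant permutation sign/rep. For the free orbits one gets a free $H_{gp}^*(G)$-module on $b^{\otimes G}$, $b \in \cB$; for the non-free orbits one gets $H_{gp}^*(\text{proper stabilizer})$-type contributions. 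The key point—following Carlsson and Caruso–May–Priddy—is that localizing at the Euler class $L = e(\rho)$ kills exactly the contributions coming from proper subgroups, since $L$ restricts to zero on every proper subgroup of $G$ (as $\rho$ contains a trivial summand after restriction) while remaining a nonzerodivisor on the free part. This immediately gives (b): after inverting $L$ only the free orbits survive, yielding $H_{gp}^*(G)[L^{-1}]\{b^{\otimes G}\}$.

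For (a), I would identify the effect of the stabilization maps $\Delta \colon \Sigma^2 D_G(\Sigma^{-2(m+1)}B) \to D_G(\Sigma^{-2m}B)$ on cohomology. The twisted diagonal associated to $S^2$ acts, via the Nishida/Steenrod description of extended-power maps, by multiplication by the Euler class $e(\rho) = L$ (up to a unit and a suspension shift), because smashing all $|G|$ factors with $S^2$ corresponds to the Thom class of $\rho$ under the identification $S^{m\rho}\wedge S^{2m}\cong (S^{2m})^{\wedge G}$ used in the excerpt. Hence $\colim_m H^*(\Sigma^{2m} D_G(\Sigma^{-2m}B))$ is precisely the localization $H^*(D_G B)[L^{-1}]$, and the finite-type and bounded-below hypotheses guarantee that cohomology takes this homotopy limit tower to a $\lim^1$-free inverse system, so the colimit of cohomology computes the continuous cohomology $H_c^*(W(G,B))$. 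Naturality, $A$-linearity and $H_{gp}^*(G)$-linearity are all visible from this description since the maps $\Delta$ and the basis identifications are natural and Steenrod-operation-compatible.

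Part (c) is then a corollary of (b): if $G$ is not elementary abelian, $H_{gp}^*(G)[L^{-1}] = 0$, because the product $\prod_{x\ne 0}\beta(x)$ defining $L$ (a priori only when $G$ is elementary abelian) must be replaced by the observation that for non-elementary-abelian $G$ the Euler class of $\rho$ is nilpotent in $H_{gp}^*(G)$—indeed $\rho$ restricted to every maximal elementary abelian subgroup has a nonzero fixed subspace only when that subgroup is all of $G$, which fails here, so by Quillen's stratification $e(\rho)$ lies in the nilradical—whence $H_{gp}^*(G)[L^{-1}] = 0$ and with it the whole module in (b). For (d), when $G = (C_p)^k$ everything in sight is built naturally from the permutation $\Sigma_{p^k}$-action restricted to the holomorph $N = GL_k(\bZ/p)\ltimes (C_p)^k$, so the Weyl group $GL_k(\bZ/p)$ acts compatibly on $H^*(D_G B)$, on $L$ (which is $GL_k$-invariant since $\rho$ is a $GL_k$-equivariant representation), and on the tower; thus the isomorphisms of (a) and (b) are $GL_k(\bZ/p)$-linear by construction.

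The main obstacle I expect is pinning down precisely that the twisted diagonal induces multiplication by $L$ on cohomology, with the correct suspension bookkeeping and the correct unit—this requires care with the Thom isomorphism for $\rho$, the sign conventions in the $(S^{2m})^{\wedge G}$ identification, and the behavior of Dyer–Lashof-type operations under the diagonal; it is exactly the computation Steenrod and Caruso–May–Priddy carry out, and the rest of the argument is then formal localization theory together with standard $\lim^1$ vanishing under the finite-type hypothesis.
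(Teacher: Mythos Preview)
Your proposal is correct and follows essentially the same argument as the paper: the Thom isomorphism identifying $\Delta^*$ with multiplication by~$L$ for~(a), Steenrod's orbit decomposition of $H_{gp}^*(G; H^*(B)^{\otimes G})$ plus Shapiro's lemma for~(b), Quillen--Venkov nilpotence for~(c), and naturality under the holomorph action for~(d). Two minor slips worth fixing: the surviving summands $b^{\otimes G}$ correspond to \emph{fixed points} (stabilizer all of~$G$), not free orbits---it is precisely the basis tensors with \emph{proper} stabilizer that get killed by inverting~$L$; and the $\lim^1$ remark is unnecessary, since $H_c^*(W(G,B))$ is by definition the colimit of the cohomology groups of the tower, not the cohomology of the homotopy limit.
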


\begin{proof}
(a)
For each $m\ge0$ there is a Thom isomorphism $H^*(\Sigma^{2m}
D_G(\Sigma^{-2m} B)) \cong H^{* + 2m(|G|-1)}(D_B G)$ under which the
homomorphism induced by $\Delta$ is given by multiplication by~$L$,
cf.~\cite{BMMS86}*{Lem.~II.5.6}.  Hence the continuous cohomology
$H_c^*(W(G, B))$ is isomorphic to the colimit of the sequence
$$
\dots \longfrom H^{* + 2(m+1)(|G|-1)}(D_G B)
	\overset{L}\longfrom H^{* + 2m(|G|-1)}(D_G B)
	\longfrom \dots \longfrom H^*(D_G B) \,,
$$
i.e., the localization of $H^*(D_G B)$ away from the Euler class~$L$.

(b)
Following Steenrod~\cite{Ste62}*{\S VIII.3}, we have a natural isomorphism
$$
H^*(D_G B) \cong H_{gp}^*(G; H^*(B)^{\otimes G}) \,.
$$
A basis for $H^*(B)^{\otimes G}$ is given by the tensor products $b'
= \otimes_{g \in G} b_g$, where each $b_g \in \cB$ lies in the chosen
basis, and the action by $G$ permutes these generators (up to signs).
Hence $H_{gp}^*(G; H^*(B)^{\otimes G})$ splits as a direct sum of summands
$$
H_{gp}^*(G; \bF_p[G/K]\{b'\}) \cong H_{gp}^*(K; \bF_p\{b'\}) \,,
$$
where $K$ is the stabilizer of~$b'$.  If $K$ is a proper subgroup
of~$G$, then $L$ restricts trivially to $H_{gp}^*(K)$, and the summand
$H_{gp}^*(G; \bF_p[G/K]\{b'\})$ is annihilated by localization away
from~$L$.  Only the summands with $b' = b^{\otimes G}$ survive, each of
which contributes $H_{gp}^*(G)[L^{-1}]\{b'\}$ to $H^*(D_G B)[L^{-1}]$.

(c)
If $G$ is not elementary abelian then $L \in H_{gp}^*(G)$ is nilpotent
by the Quillen--Venkov theorem~\cite{QV72}, \cite{Car84}*{Lem.~III.1},
hence $H_{gp}^*(G)[L^{-1}] = 0$.

(d)
The action of each element in $GL_k(\bZ/p)$ permutes the elements
in $G = (C_p)^k$, hence also permutes the tensor factors in
$b^{\otimes G}$, all of which are equal.
\end{proof}

To pass from continuous cohomology to homotopy groups, we make
use of an inverse limit of Adams spectral sequences associated
to the tower~\eqref{eq:extpowtower}, as in~\cite{Car84}*{\S III},
~\cite{CMP87}*{\S7} and~\cite{LNR12}*{\S2}.

\begin{proposition} \label{prop:limitAdamsspseq}
Let $G$ be a $p$-group and $B$ a bounded below spectrum with $H_*(B;
\bF_p)$ of finite type.  There is a natural, strongly convergent,
inverse limit Adams spectral sequence
$$
E_2^{s,t} = \Ext_A^{s,t}(H^*(D_G B)[L^{-1}], \bF_p)
	\Longrightarrow \pi_{t-s} W(G, B)^\wedge_p \,.
$$
\end{proposition}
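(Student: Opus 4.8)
The plan is to build the inverse limit Adams spectral sequence by first setting up a compatible family of Adams resolutions for the tower \eqref{eq:extpowtower} and then passing to the homotopy limit, following the template of \cite{Car84}*{\S III}, \cite{CMP87}*{\S7} and \cite{LNR12}*{\S2}. First I would recall that each term $\Sigma^{2m} D_G(\Sigma^{-2m} B)$ is bounded below with $\bF_p$-cohomology of finite type in each degree (this follows from Proposition~\ref{prop:cohomWGB}(a,b) together with the Thom isomorphism, since $H^*(B)$ is of finite type and $H_{gp}^*(G)$ is finitely generated), so each has a classical convergent Adams spectral sequence. The maps $\Delta$ in the tower are realized by maps of spectra, so one can choose the Adams resolutions functorially enough that the connecting maps induce maps of the whole tower of resolutions; concretely, one builds a tower of Adams resolutions whose associated $E_1$-terms form an inverse system of cofiber sequences. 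Taking the homotopy limit over $m$ of these Adams resolutions yields a filtered object computing $\pi_* W(G,B)^\wedge_p$, and the associated spectral sequence has
$$
E_1^{s,t} = \lim_m \Bigl(\pi_t \text{ of the $s$-th layer}\Bigr)
$$
with no $\lim^1$ contribution in a suitable range because the relevant inverse systems are Mittag--Leffler (the transition maps on cohomology are, up to the Thom isomorphism, multiplication by the Euler class $L$, and the colimit $H^*(D_G B)[L^{-1}]$ is itself of finite type in each degree by Proposition~\ref{prop:cohomWGB}(b)).

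Next I would identify the $E_2$-term. The $E_1$-page of the limiting spectral sequence is computed by applying $\Hom_A(-, \bF_p)$ (equivalently $\Ext_A^{0}$) to the colimit system of cohomologies; since $\Ext_A^{s,t}(-, \bF_p)$ commutes with filtered colimits in the first variable for finite-type modules, we obtain
$$
E_2^{s,t} = \colim_m \Ext_A^{s,t}\bigl(H^*(\Sigma^{2m} D_G(\Sigma^{-2m} B)), \bF_p\bigr)
	\cong \Ext_A^{s,t}\bigl(\colim_m H^*(\Sigma^{2m} D_G(\Sigma^{-2m} B)), \bF_p\bigr) \,,
$$
and by Proposition~\ref{prop:cohomWGB}(a) the colimit is $H^*(D_G B)[L^{-1}]$. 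This gives the asserted $E_2$-term $\Ext_A^{s,t}(H^*(D_G B)[L^{-1}], \bF_p)$. Strong convergence to $\pi_{t-s} W(G,B)^\wedge_p$ then follows from the strong convergence of each individual Adams spectral sequence together with the vanishing of $\lim^1$ and $\lim$ of higher terms: the key input is that $H^*(D_G B)[L^{-1}]$ is bounded below and of finite type (so that $\Ext_A$ is finite-dimensional in each bidegree and the tower has finitely many nonzero filtration quotients below any line), which forces the lim-Adams spectral sequence of \cite{CMP87}*{\S7} to converge strongly. Naturality in $B$ is automatic since every construction — the tower, the Adams resolutions, the Thom isomorphisms, and the $\Ext$ identification — is natural.

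The main obstacle is the convergence and $\lim^1$ bookkeeping for the homotopy limit. Even though each finite-stage Adams spectral sequence converges strongly, the homotopy limit $W(G,B)$ is an infinite homotopy inverse limit, so one must control both the $\lim^1$ terms coming from the $\holim_m$ and the interchange of $\holim_m$ with the Adams tower. I would handle this exactly as in \cite{LNR12}*{\S2}: observe that the transition map $\Delta$ raises connectivity in the relevant sense after smashing with enough suspensions (the Thom spectra $\Sigma^{2m} D_G(\Sigma^{-2m}B)$ have connectivity tending to $+\infty$? — no, rather one works with the \emph{pro}-object and notes that on each fixed $\Ext$-bidegree the system $\{\Ext_A^{s,t}(H^*(\Sigma^{2m}D_G(\Sigma^{-2m}B)),\bF_p)\}_m$ stabilizes), so that the pro-system is essentially constant in each bidegree and all $\lim^1$-terms vanish. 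This is where the finite type hypothesis on $H_*(B;\bF_p)$ is essential, and it is the only genuinely delicate point; the remaining steps are formal consequences of the machinery already available in the cited references.
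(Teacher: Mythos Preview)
Your approach is essentially the paper's: invoke the inverse limit Adams spectral sequence machinery of \cite{CMP87}*{\S7} and \cite{LNR12}*{\S2}, check that each $\Sigma^{2m} D_G(\Sigma^{-2m} B)$ is bounded below with finite type mod~$p$ homology so that the individual Adams spectral sequences and the cited results apply, and then identify the $E_2$-term using Proposition~\ref{prop:cohomWGB}(a). The paper's proof is slightly cleaner in that it directly cites \cite{CMP87}*{Prop.~7.1} and \cite{LNR12}*{Prop.~2.2} rather than re-deriving the $\lim^1$ bookkeeping, but the content is the same.

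One correction: you write $E_2^{s,t} = \colim_m \Ext_A^{s,t}(H^*(\Sigma^{2m} D_G(\Sigma^{-2m} B)), \bF_p)$ and say that $\Ext_A^{s,t}(-,\bF_p)$ ``commutes with filtered colimits.'' This is backwards. The functor $\Ext_A^{s,t}(-,\bF_p)$ is contravariant, so it sends the filtered colimit $\colim_m H^*(\Sigma^{2m} D_G(\Sigma^{-2m} B))$ to a filtered \emph{limit}; the correct statement (and the one the paper uses) is
\[
E_2^{s,t} = \lim_m \Ext_A^{s,t}\bigl(H^*(\Sigma^{2m} D_G(\Sigma^{-2m} B)), \bF_p\bigr)
  \cong \Ext_A^{s,t}\bigl(\colim_m H^*(\Sigma^{2m} D_G(\Sigma^{-2m} B)), \bF_p\bigr).
\]
This is not merely notational: the limit direction is what makes the spectral sequence an \emph{inverse} limit Adams spectral sequence converging to $\pi_* \holim_m(-)^\wedge_p$, and the finiteness of each $E_2^{s,t}(m)$ is what allows the interchange. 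With that fix your sketch is correct.
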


\begin{proof}
Each spectrum $\Sigma^{2m} D_G(\Sigma^{-2m} B)$ is bounded below with
mod~$p$ homology of finite type.  Its mod~$p$ Adams spectral
sequence
$$
E_2^{*,*}(m) = \Ext_A^{*,*}(H^*(\Sigma^{2m} D_G(\Sigma^{-2m} B)), \bF_p)
        \Longrightarrow \pi_* \Sigma^{2m} D_G(\Sigma^{-2m} B)^\wedge_p
$$
is therefore strongly convergent, with $E_2^{*,*}(m)$ finite in each
bidegree.  By~\cite{CMP87}*{Prop.~7.1}, in the slightly generalized form
from~\cite{LNR12}*{Prop.~2.2}, the algebraic limit groups $E_r^{*,*} =
\lim_m E_r^{*,*}(m)$ (and the induced differentials $d_r$) also form a
spectral sequence, with $E_2$-term
\begin{align*}
E_2^{*,*} &= \lim_m \Ext_A^{*,*}(H^*(\Sigma^{2m} D_G(\Sigma^{-2m} B)), \bF_p) \\
&\cong \Ext_A^{*,*}(\colim_m H^*(\Sigma^{2m} D_G(\Sigma^{-2m} B)), \bF_p) \\
&= \Ext_A^{*,*}(H_c^*(W(G, B)), \bF_p)
	\cong \Ext_A^{*,*}(H^*(D_G B)[L^{-1}], \bF_p)\,.
\end{align*}
Moreover, this spectral sequence converges strongly to
$$
\pi_* \holim_m \Sigma^{2m} D_G(\Sigma^{-2m} B)^\wedge_p
	= \pi_* W(G, B)^\wedge_p \,,
$$
as asserted.
\end{proof}

\begin{proposition} \label{prop:WGBnotelemab}
Let $G$ be a $p$-group that is not elementary abelian, and let $B$ be a
bounded below spectrum with $H_*(B; \bF_p)$ of finite type.
Then $W(G, B)^\wedge_p \simeq *$.
\end{proposition}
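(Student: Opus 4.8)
The plan is to deduce this immediately from the machinery just assembled. Since $G$ is a $p$-group that is not elementary abelian, Proposition~\ref{prop:cohomWGB}(c) gives $H^*(D_G B)[L^{-1}] = 0$. Feeding this vanishing into the inverse limit Adams spectral sequence of Proposition~\ref{prop:limitAdamsspseq}, whose $E_2$-term is $\Ext_A^{s,t}(H^*(D_G B)[L^{-1}], \bF_p)$, we get $E_2^{s,t} = 0$ for all $s,t$. The spectral sequence is strongly convergent to $\pi_{t-s} W(G,B)^\wedge_p$ by that same proposition, so the abutment vanishes, i.e. $\pi_* W(G,B)^\wedge_p = 0$ and hence $W(G,B)^\wedge_p \simeq *$.

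Concretely, I would write: ``By Proposition~\ref{prop:cohomWGB}(c), since $G$ is not elementary abelian we have $H^*(D_G B)[L^{-1}] = 0$. Hence the $E_2$-term $\Ext_A^{*,*}(H^*(D_G B)[L^{-1}], \bF_p)$ of the strongly convergent inverse limit Adams spectral sequence of Proposition~\ref{prop:limitAdamsspseq} is zero, so $\pi_* W(G,B)^\wedge_p = 0$ and $W(G,B)^\wedge_p \simeq *$.'' The only hypotheses needed — $G$ a $p$-group (not elementary abelian), $B$ bounded below with $H_*(B;\bF_p)$ of finite type — are exactly those present, so the cited propositions apply verbatim.

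There is essentially no obstacle here: this proposition is a bookkeeping corollary of Propositions~\ref{prop:cohomWGB} and~\ref{prop:limitAdamsspseq}, and all the real work (the Quillen--Venkov nilpotence of $L$, the Steenrod-style computation of $H^*(D_G B)$, and the construction of the limit spectral sequence) has already been done. The one point worth a half-sentence of care is that strong convergence of the limit spectral sequence really does force the $p$-completed homotopy to vanish when $E_2 = 0$ — there is no $\lim^1$ or conditional-convergence subtlety because the spectral sequence was set up to be strongly convergent in Proposition~\ref{prop:limitAdamsspseq}. If I wanted to be maximally self-contained I might instead argue directly that a homotopy limit of $p$-complete spectra whose mod~$p$ cohomologies form a colimit system with trivial colimit is itself $p$-adically trivial, but invoking the spectral sequence is cleaner and is the route the surrounding text has prepared.
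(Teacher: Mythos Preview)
Your proof is correct and matches the paper's own argument essentially verbatim: invoke Proposition~\ref{prop:cohomWGB}(c) to get $H^*(D_G B)[L^{-1}] = 0$, plug this into the $E_2$-term of the strongly convergent spectral sequence of Proposition~\ref{prop:limitAdamsspseq}, and conclude $\pi_* W(G,B)^\wedge_p = 0$.
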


\begin{proof}
By Proposition~\ref{prop:cohomWGB}(c) we have $H^*(D_G B)[L^{-1}] = 0$,
so $E_2^{*,*} = 0$ in the inverse limit spectral sequence of
Proposition~\ref{prop:limitAdamsspseq}, which by strong convergence
implies $\pi_* W(G, B)^\wedge_p = 0$.
\end{proof}

We can now collect some of the threads, as in the proof
of~\cite{Car84}*{Thm.~C}.

\begin{proof}[Proof of Theorem~\ref{thm:segal-smash-pgroup} for $G$
not elementary abelian]
Let $G$ be a $p$-group that is not elementary abelian, and suppose that
Theorem~\ref{thm:segal-smash-pgroup} holds for each proper subquotient
of~$G$.  Let $B$ be a bounded below flat orthogonal spectrum with
$H_*(B; \bF_p)$ of finite type.  By Theorem~\ref{thm:VGB}(b), $V(G,
B)^\wedge_p \simeq *$.  By Proposition~\ref{prop:WGBnotelemab},
$W(G, B)^\wedge_p \simeq *$.  Hence $F(S^{\infty\rho}, B^{\wedge
G})^G$ becomes trivial after $p$-completion, by the homotopy cofiber
sequence in Definition~\ref{def:VWdelta}.  Therefore $\gamma \: (B^{\wedge G})^G
\to (B^{\wedge G})^{hG}$ becomes an equivalence after $p$-completion,
by Proposition~\ref{prop:SinftyrhoBGG}.
\end{proof}

In the elementary abelian case, with $G \cong (C_p)^k$, we need better
control of the connecting map~$\delta$.  Suppose that $B$ is bounded
below with $H_*(B; \bF_p)$ of finite type.  Then $V(G, B)^\wedge_p$ and
$\Sigma^{2m} D_G(\Sigma^{-2m} B)$ are also bounded below with mod~$p$
homology of finite type, in view of our inductive hypothesis on~$G$
and Theorem~\ref{thm:VGB}(a).  For each~$m\ge0$ the composite map
$$
V(G, B) \overset{\delta}\longto W(G, B)
	\longto \Sigma^{2m} D_G(\Sigma^{-2m} B)
$$
induces a morphism
$$
\Ext_A^{*,*}(H^*(V(G, B)), \bF_p)
	\longto \Ext_A^{*,*}(H^*(\Sigma^{2m} D_G(\Sigma^{-2m} B)), \bF_p)
	= E_2^{*,*}(m)
$$
of strongly convergent Adams spectral sequences.  Passing to the limit
over~$m$, these define a natural morphism of spectral sequences
$$
E_2(\delta) \: \Ext_A^{*,*}(H^*(V(G, B)), \bF_p)
	\longto \Ext_A^{*,*}(H_c^*(W(G, B)), \bF_p) = E_2^{*,*}
$$
converging to $\pi_*(\delta)^\wedge_p \: \pi_* V(G, B)^\wedge_p
\to W(G, B)^\wedge_p$.  By construction, $E_2(\delta) = f_B^*$ is induced
by the homomorphism $f_B$ specified in the following definition.

\begin{definition} \label{def:fB}
Let $f_B = \delta^* \kappa$ be the natural $A$- and $\Aut(G)$-linear
homomorphism defined by the composition
$$
f_B \: H_c^*(W(G, B))
	\overset{\kappa}\longto H^*(W(G, B))
	\overset{\delta^*}\longto H^*(V(G, B)) \,,
$$
where
$$
\kappa \: H_c^*(W(G, B))
	= \colim_m H^*(\Sigma^{2m} D_G(\Sigma^{-2m} B))
	\longto H^*(W(G, B))
$$
is the canonical map from the continuous to the ordinary mod~$p$
cohomology associated to the tower~\eqref{eq:extpowtower}.
\end{definition}

\section{Comparison of $\Tor^A$-equivalences}

We now assume that $G = (C_p)^k$ with $k\ge1$, so that $\Aut(G) =
GL_k(\bZ/p)$, and that Theorem~\ref{thm:segal-smash-pgroup} holds for
each proper subquotient of~$G$.  We will show that
$$
\delta^\wedge_p \: V(G, B)^\wedge_p \longto W(G, B)^\wedge_p
$$
is an equivalence for suitable~$B$ by using the classical Segal conjecture
to show that the $A$- and $GL_k(\bZ/p)$-linear homomorphism
$$
f_B \: H^*(D_G B)[L^{-1}] \cong H_c^*(W(G, B))
	\longto H^*(V(G, B))
	\cong \bigoplus^{p^{\binom{k}{2}}} \Sigma^{-k} H^*(B) \,,
$$
cf.~Definition~\ref{def:fB}, Proposition~\ref{prop:cohomWGB}(a,d) and
Theorem~\ref{thm:VGB}(a), is a $\Tor^A$-equivalence in the key cases $B =
S$ and $B = H = H\bF_p$.  More precisely, as a $U_k(\bZ/p)$-module the
target can be rewritten as
$$
H^*(V(G, B)) \cong \Sigma^{-k} H^*(B)[U_k(\bZ/p)] \,,
$$
cf.~Addendum~\ref{add:VGBequiv}.  The proof will be an application of
the following comparison theorem of Priddy--Wilkerson.
(As an aside, we recall that for $p$-groups~$U$ an $\bF_p[U]$-module
is projective if and only if it is free.)

\begin{theorem}[\cite{PW85}*{Thm.~III(i)}]
\label{thm:PWcomparison}
Let $A$ be the mod~$p$ Steenrod algebra, let $U$ be a $p$-group, and
let $A[U] = A \otimes \bF_p[U]$ denote the group algebra.  Let $f \:
M \to N$ be a surjective $A[U]$-module homomorphism, where $M$ and $N$
are projective as $\bF_p[U]$-modules.  If
$$
f^U_* \: \Tor^A_{*,*}(\bF_p, M^U) \longto \Tor^A_{*,*}(\bF_p, N^U)
$$
is an isomorphism then
$$
f_* \: \Tor^A_{*,*}(\bF_p, M) \longto \Tor^A_{*,*}(\bF_p, N)
$$
is an isomorphism, too.
\end{theorem}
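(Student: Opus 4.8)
The plan is to reduce the assertion, by a long-exact-sequence argument, to a statement about $K := \ker(f)$ alone, to settle that statement for $U \cong C_p$ by an explicit ``Hilbert's theorem~90'' filtration, and to bootstrap to a general $p$-group~$U$ by induction on~$|U|$, splitting off a central subgroup of order~$p$ at each step.

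First I would consider the short exact sequence $0 \to K \to M \overset{f}\longto N \to 0$ of $A[U]$-modules. Since $N$ is projective over $\bF_p[U]$, this splits $\bF_p[U]$-linearly; hence $K$ is a direct summand of the free module~$M$, so $K$ is itself free over $\bF_p[U]$, and applying $(-)^U$ yields a split short exact sequence $0 \to K^U \to M^U \overset{f^U}\longto N^U \to 0$. Feeding this into $\Tor^A_{*,*}(\bF_p, -)$ shows that the hypothesis ``$f^U_*$ is an isomorphism'' forces $\Tor^A_{*,*}(\bF_p, K^U) = 0$; and the long exact sequence of $\Tor^A_{*,*}(\bF_p, -)$ applied to the original sequence shows that $\Tor^A_{*,*}(\bF_p, K) = 0$ would in turn imply that $f_*$ is an isomorphism. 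So the theorem reduces to the purely algebraic claim: \emph{if $K$ is a free $\bF_p[U]$-module carrying a commuting $A$-module structure and $\Tor^A_{*,*}(\bF_p, K^U) = 0$, then $\Tor^A_{*,*}(\bF_p, K) = 0$.}

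I would prove the claim first for $U \cong C_p$. Writing $\sigma$ for a generator and $\nu = \sigma - 1$, one has $\bF_p[C_p] = \bF_p[\nu]/(\nu^p)$, and a short computation identifies the norm element $1 + \sigma + \dots + \sigma^{p-1}$ with $\nu^{p-1}$. Since $K$ is $\bF_p[C_p]$-free, multiplication by~$\nu$ produces a finite filtration $K \supseteq \nu K \supseteq \dots \supseteq \nu^{p-1}K = K^{C_p} \supseteq \nu^p K = 0$ by $A$-submodules (the action of~$\nu$ is $A$-linear because $A$ and $\bF_p[U]$ commute in $A[U] = A \otimes \bF_p[U]$), and multiplication by~$\nu^i$ induces an $A$-linear isomorphism $K/\nu K \cong \nu^i K/\nu^{i+1}K$; composing with the norm isomorphism $K/\nu K \cong \nu^{p-1}K = K^{C_p}$ --- freeness of~$K$ being precisely the ``Hilbert's theorem~90'' input --- identifies every subquotient $A$-linearly with~$K^{C_p}$. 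As $\Tor^A_{*,*}(\bF_p, K^{C_p}) = 0$, the long exact sequences of the successive extensions in this finite filtration give $\Tor^A_{*,*}(\bF_p, K) = 0$. For general~$U$ I would then induct on~$|U|$: the trivial group is vacuous, and for $U$ nontrivial I would pick a central subgroup $C \cong C_p$, note that $K^C$ is an $A[U/C]$-module with commuting actions satisfying $(K^C)^{U/C} = K^U$ and --- since $\bF_p[U]$ is free over $\bF_p[C]$ on the left with $(\bF_p[U])^C$ free of rank one over $\bF_p[U/C]$ --- free over $\bF_p[U/C]$; the inductive hypothesis then gives $\Tor^A_{*,*}(\bF_p, K^C) = 0$, whereupon the $C_p$-case, applied to $C \subseteq U$ and the $\bF_p[C]$-free module~$K$, gives $\Tor^A_{*,*}(\bF_p, K) = 0$.

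The steps that need genuine care, rather than any deep obstacle, are the two bookkeeping points: checking that $K^C$ really is free over $\bF_p[U/C]$ in the inductive step, and verifying that the subquotient isomorphisms in the $C_p$-case are honestly $A$-linear. Both reduce to the single observation that multiplication by any element of $\bF_p[U]$ is an $A$-module endomorphism of every $A[U]$-module, which is immediate from the identification $A[U] = A \otimes \bF_p[U]$.
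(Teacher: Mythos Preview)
The paper does not give its own proof of this theorem; it is quoted verbatim from Priddy--Wilkerson \cite{PW85}*{Thm.~III(i)} and then used as a black box. Your argument is correct and is in fact the Priddy--Wilkerson proof in outline: reduce to $\Tor^A_{*,*}(\bF_p,K)=0$ for the kernel~$K$, then observe that a free $\bF_p[U]$-module admits an $A$-linear filtration whose associated graded is a direct sum of copies of~$K^U$. Your treatment of the $C_p$ case via the $\nu$-adic filtration and the norm identification $K/\nu K \cong K^{C_p}$ is exactly the ``Hilbert's Theorem~90'' step that gives \cite{PW85} its title, and the inductive passage through a central $C_p$ is their reduction as well. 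The two bookkeeping points you flag --- $A$-linearity of multiplication by group-ring elements, and freeness of $K^C$ over $\bF_p[U/C]$ --- are handled correctly; the latter follows, as you say, from the isomorphism $\bF_p[U]^C \cong \bF_p[U/C]$ of $\bF_p[U/C]$-modules when $C$ is central. One cosmetic slip: you write ``the free module~$M$'', whereas $M$ is only assumed projective; this is harmless because projective and free coincide over~$\bF_p[U]$ for a $p$-group~$U$, but it would be cleaner to say so explicitly.
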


Recall from~\cite{AGM85}*{Prop.~1.2} that the conclusion about~$f_*$,
i.e., that it is a $\Tor^A$-equivalence, also implies that
\begin{equation} \label{eq:Extiso}
f^* \: \Ext_A^{*,*}(N, Q) \longto \Ext_A^{*,*}(M, Q)
\end{equation}
is an isomorphism for each $A$-module~$Q$ that is bounded below and
of finite type.  This will be applied with $Q = \bF_p$ to show that a
morphism of Adams spectral sequences is an isomorphism.

For brevity we hereafter set
\begin{equation}
U := U_k(\bZ/p) \subset GL_k(\bZ/p) \,.
\end{equation}

\begin{remark}
Our approach to specifying $f_B$ differs from that of~\cite{PW85}*{(1.7)},
where $f$ for $B = S$ is instead defined via the evidently surjective
projection
$$
f \: H_{gp}^*(G)[L^{-1}] \cong H_c^*(W(G, S))
	\longto \bF_p \otimes_A H_{gp}^*(G)[L^{-1}]
$$
onto the $A$-module coinvariants, followed by an (a posteriori)
identification of the target with $\Sigma^{-k} \St_k \otimes \bF_p \cong
H^*(V(G, S))$.  This will not work for many other spectra~$B$, including
$B = H$, since $A$ generally acts non-trivially on $H^*(V(G, B))$.
\end{remark}

To verify that our homomorphism $f_B$ is surjective for $B = H$,
we now rely on the classical Segal conjecture in the case $B = S$,
including the delicate comparison in~\cite{CMP87}*{\S\S 5--6} of $S_G$
with $F(EG_+, H)$, representing stable equivariant cohomotopy and mod~$p$
Borel cohomology, respectively.

\begin{proposition} \label{prop:fSsurj}
$f_S \: H_{gp}^*(G)[L^{-1}] \to \Sigma^{-k} \bF_p[U]$ is surjective.
\end{proposition}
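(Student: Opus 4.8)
The plan is to reduce, using equivariance, to the single assertion that $f_S$ is nonzero, and then to extract that from the classical Segal conjecture for $G = (C_p)^k$. First I would note that, by Carlsson's theorem~\cite{Car84}, \cite{AGM85}, which for elementary abelian $G$ requires no finiteness hypotheses, the spectrum $F(S^{\infty\rho}, S^{\wedge G})^G$ becomes trivial after $p$-completion; hence, by the homotopy cofiber sequence of Definition~\ref{def:VWdelta} in the case $B = S$, the map $\delta^\wedge_p \: V(G,S)^\wedge_p \to W(G,S)^\wedge_p$ is an equivalence. Since $p$-completion preserves mod~$p$ cohomology, $\delta^* \: H^*(W(G,S)) \to H^*(V(G,S))$ is then an isomorphism of $A$- and $GL_k(\bZ/p)$-modules, so by Definition~\ref{def:fB} the surjectivity of $f_S$ is equivalent to that of $\kappa \: H^*(D_G S)[L^{-1}] \to H^*(W(G,S))$.

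Second I would invoke the structure of the target. By Theorem~\ref{thm:VGB}(a) we have $H^*(W(G,S)) \cong H^*(V(G,S)) \cong \Sigma^{-k}\St_k \otimes \bF_p$, which is concentrated in cohomological degree $-k$, carries the trivial $A$-action, and as a $GL_k(\bZ/p)$-module is the mod~$p$ Steinberg module, which is irreducible. As $f_S$ is $A$-linear and $\Aut(G) = GL_k(\bZ/p)$-linear (Definition~\ref{def:fB}), its image is a $GL_k(\bZ/p)$-submodule of this irreducible module and hence is either $0$ or all of it. It therefore suffices to show that $f_S \ne 0$.

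Third I would prove $f_S \ne 0$ by comparing Adams spectral sequences in the stem $t - s = -k$. Here $E_2(\delta) = f_S^*$ is a morphism from the Adams spectral sequence of $V(G,S)$ to the inverse limit Adams spectral sequence of $W(G,S)$ (Proposition~\ref{prop:limitAdamsspseq}) converging to the isomorphism $\pi_*(\delta)^\wedge_p$, and hence is an isomorphism on $E_\infty$-terms. By Theorem~\ref{thm:VGB}(a) and Addendum~\ref{add:VGBequiv}, $V(G,S)^\wedge_p$ is a finite wedge of copies of $\Sigma^{-k}S^\wedge_p$, so in this stem its Adams spectral sequence reduces to a direct sum of copies of the stem-zero part of the Adams spectral sequence of the sphere spectrum: $E_2^{s,s-k} = E_\infty^{s,s-k} \cong \St_k \otimes \bF_p$ for all $s \ge 0$, with no differentials. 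A nonzero class $z \in \Ext_A^{0,-k}(H^*(V(G,S)), \bF_p)$ is then a permanent cycle with nonzero $E_\infty$-class, so $f_S^*(z)$ is a permanent cycle whose $E_\infty$-class, being the image under the $E_\infty$-isomorphism, is again nonzero; therefore $f_S^*(z) \ne 0$ in $\Ext_A^{0,-k}(H^*(D_G S)[L^{-1}], \bF_p)$. Since the bidegree-$(0,-k)$ part of $f_S^* = E_2(\delta)$ is the $\bF_p$-linear dual of the degree-$(-k)$ component of the $A$-module coindecomposables homomorphism induced by $f_S$, this yields $f_S \ne 0$, and with the previous step, $f_S$ is surjective.

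The step I expect to be the main obstacle is the assertion, in the third step, that the equivalence $\delta^\wedge_p$ induces an isomorphism on $E_\infty$-terms: for the inverse limit Adams spectral sequence one has to verify that its filtration on $\pi_*(W(G,S))^\wedge_p$ agrees, under $\delta$, with the ordinary Adams filtration on $\pi_*(V(G,S))^\wedge_p$, and not merely dominates or is dominated by it. This is precisely where the finiteness hypotheses and the concrete description of $W(G,S)^\wedge_p$ furnished by the classical Segal conjecture enter -- in practice through the comparison of $S_G$ with mod~$p$ Borel cohomology $F(EG_+, H)$ carried out in~\cite{CMP87}*{\S\S5--6}; alternatively, one may quote that comparison to identify the $A$-module coindecomposables of $H^*(D_G S)[L^{-1}]$ with $H^*(V(G,S))$ outright, as is done in~\cite{PW85}*{(1.7)}, and so bypass the Adams spectral sequence argument altogether.
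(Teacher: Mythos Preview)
Your first two steps are correct, and the reduction to $f_S \ne 0$ via irreducibility of the mod~$p$ Steinberg representation is an elegant move not present in the paper. The gap you identify in step three is genuine: a $p$-adic equivalence $\delta^\wedge_p$ by itself does not force the morphism of spectral sequences to be an $E_\infty$-isomorphism, since the inverse-limit Adams filtration on $\pi_*W(G,S)^\wedge_p$ need not coincide with the pulled-back ordinary Adams filtration on $\pi_*V(G,S)^\wedge_p$.

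The paper closes this gap by a different route, working entirely on the $W$-side rather than comparing with~$V$ through~$\delta$. It invokes the Adams--Gunawardena--Miller $\Tor^A$-equivalence $H_{gp}^*(G)[L^{-1}] \to \bigoplus^{p^{\binom{k}{2}}}\Sigma^{-k}\bF_p$ to identify the $E_2$-term of the inverse-limit spectral sequence with that for a wedge of $(-k)$-spheres, and hence $W(G,S)^\wedge_p \simeq \bigvee \Sigma^{-k}S^\wedge_p$. The edge homomorphism of that spectral sequence then factors as
\[
\pi_{-k}W(G,S)/p \xrightarrow{\ \bar h\ } \Hom_A^{-k}(H^*(W(G,S)),\bF_p) \xrightarrow{\ \kappa^*\ } \Hom_A^{-k}(H_c^*(W(G,S)),\bF_p)\,,
\]
with $\bar h$ induced by the Hurewicz map. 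Since $W(G,S)^\wedge_p$ is a wedge of $(-k)$-spheres, both $\bar h$ and the composite $\kappa^*\bar h$ are isomorphisms in degree~$-k$, so $\kappa^*$ is an isomorphism and $\kappa$ is surjective; combined with your step one this gives $f_S=\delta^*\kappa$ surjective directly, without the Steinberg irreducibility. Note too that the AGM input already forces $E_2^{s,t}(W)=0$ for $t-s<-k$, whence $E_\infty^{0,-k}(W)=E_2^{0,-k}(W)$ and the edge in bidegree $(0,-k)$ is onto: this single observation, together with commutativity of the square of edge homomorphisms over $\pi_*(\delta)^\wedge_p$, is enough to repair your step three in that bidegree, without the global $E_\infty$-comparison or your suggested detours through \cite{CMP87}*{\S\S5--6} or \cite{PW85}.
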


\begin{proof}
The edge homomorphism of the 
inverse limit Adams spectral sequence
$$
E_2^{*,*} = \Ext_A^{*,*}(H_c^*(W(G, S)), \bF_p)
	\Longrightarrow \pi_* W(G, S)^\wedge_p
$$
from Proposition~\ref{prop:limitAdamsspseq}
factors as
$$
\pi_* W(G, S)^\wedge_p
	\overset{h}\longto \Hom_A(H^*(W(G, S)), \bF_p)
	\overset{\kappa^*}\longto
	\Hom_A(H_c^*(W(G, S)), \bF_p) \,,
$$
where $h$ is induced by the Hurewicz homomorphism.
By Adams--Gunawardena--Miller~\cite{AGM85}*{Thm.~1.1(a,b)},
there is a $\Tor^A$-equivalence
$$
H_c^*(W(G, S)) \cong H_{gp}^*(G)[L^{-1}]
	\longto \bigoplus^{p^{\binom{k}{2}}} \Sigma^{-k} \bF_p \,.
$$
Hence the inverse limit Adams spectral sequence is isomorphic to
the direct sum of $p^{\binom{k}{2}}$ copies of the Adams spectral
sequence for~$S^{-k}$, and this implies that $W(G, S)^\wedge_p \simeq
\bigvee^{p^{\binom{k}{2}}} \Sigma^{-k} S^\wedge_p$.  In particular,
the homomorphisms $\bar h$ and $\kappa^* \bar h$ in
$$
\pi_{-k} W(G, S)/p
	\overset{\bar h}\longto \Hom_A^{-k}(H^*(W(G, S)), \bF_p)
	\overset{\kappa^*}\longto \Hom_A^{-k}(H_c^*(W(G, S)), \bF_p)
$$
are isomorphisms, hence so is~$\kappa^*$.  It follows that $\kappa$
is surjective.  Finally, $\delta^\wedge_p$ is an equivalence for $B =
S$, by the classical Segal conjecture, so $\delta^* \: H^*(W(G, S))
\to H^*(V(G, S))$ is an isomorphism.  Hence $f_S = \delta^* \kappa$
is surjective.
\end{proof}

\begin{proposition} \label{prop:fHsurj}
$f_H \: H^*(D_G H)[L^{-1}] \to \Sigma^{-k} A[U]$ is surjective.
\end{proposition}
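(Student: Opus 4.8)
The plan is to deduce the surjectivity of $f_H$ from that of $f_S$ (Proposition~\ref{prop:fSsurj}) by combining naturality of $f_B$ in~$B$ along the unit $u\:S\to H$ with a reduction to a single cohomological degree. The key structural observation is that the target $\Sigma^{-k}A[U] \cong \Sigma^{-k}H^*(H)\otimes\bF_p[U]$ is a \emph{free} left $A$-module whose $A$-module generators all lie in degree~$-k$. Since $f_H$ is $A$-linear (Definition~\ref{def:fB}), its image is an $A$-submodule of the target, so $f_H$ is surjective as soon as its image contains this generating set; equivalently, it suffices to show that $f_H$ restricts to a surjection in cohomological degree~$-k$, onto $(\Sigma^{-k}A[U])^{-k} = \bF_p[U]$.

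Next I would use that $f_B$, the continuous cohomology $H_c^*(W(G,-))$, and the cohomology $H^*(V(G,-))$ are all natural in~$B$, so that $u\:S\to H$ yields a commutative square
$$
\xymatrix{
H_c^*(W(G,H)) \ar[d]_{u^*} \ar[r]^-{f_H}
	& H^*(V(G,H)) \ar[d]^{u^*} \\
H_c^*(W(G,S)) \ar[r]^-{f_S}
	& H^*(V(G,S)) \,,
}
$$
and I claim that in degree~$-k$ the left vertical map is surjective while the right vertical map is an isomorphism. For the left map: choosing the homogeneous basis~$\cB$ of $H^*(H) = A$ so that the augmentation $\epsilon = u^*\: A\to\bF_p = H^*(S)$ carries~$\cB$ into $\{1\}\cup\{0\}$, the description $H^*(D_G B)[L^{-1}] \cong H_{gp}^*(G)[L^{-1}]\{b^{\otimes G}\mid b\in\cB\}$ of Proposition~\ref{prop:cohomWGB} identifies $u^*$ with the $H_{gp}^*(G)[L^{-1}]$-linear projection onto the summand indexed by $b=1$, hence $u^*$ is split surjective. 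For the right map: by Theorem~\ref{thm:VGB}(a) and Addendum~\ref{add:VGBequiv}, $V(G,-)^\wedge_p$ is naturally $U_+\wedge\Sigma^{-k}(-)^\wedge_p$, so $u^*$ becomes $\bigoplus_U\Sigma^{-k}\epsilon\:\Sigma^{-k}A[U]\to\Sigma^{-k}\bF_p[U]$, which in degree~$-k$ is $\bigoplus_U\mathrm{id}$ because $\epsilon$ is the identity on $A^0 = \bF_p$. Since $f_S$ is surjective (Proposition~\ref{prop:fSsurj}) and its target $\Sigma^{-k}\bF_p[U]$ is concentrated in degree~$-k$, the composite $f_S\circ u^* = u^*\circ f_H$ is surjective in degree~$-k$; as the right vertical $u^*$ is an isomorphism there, $f_H$ is surjective in degree~$-k$, and therefore $f_H$ is surjective by the reduction of the first paragraph.

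The delicate point is that reduction. The source $H^*(D_G H)[L^{-1}]$ is a localization and is \emph{not} bounded below, so one cannot run a downward induction over cohomological degrees; what makes a surjectivity statement in the single degree~$-k$ suffice is precisely that the target $\Sigma^{-k}A[U]$ is a free $A$-module with all $A$-module generators in degree~$-k$, so that surjectivity in that one degree propagates by $A$-linearity. The only genuinely geometric input is the classical Segal conjecture for $B=S$, which is already packaged into Proposition~\ref{prop:fSsurj}; everything else is formal naturality together with the cohomology computations of Proposition~\ref{prop:cohomWGB} and Theorem~\ref{thm:VGB}.
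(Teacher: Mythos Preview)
Your argument is correct and is essentially the same as the paper's: both use naturality of $f_B$ along the Hurewicz map $S\to H$ to form the same commutative square, invoke the surjectivity of $f_S$ and of the left-hand $h^*$ (from Proposition~\ref{prop:cohomWGB}(b)), and conclude that $f_H$ hits the degree~$-k$ generators of the free $A$-module $\Sigma^{-k}A[U]$, whence it is surjective by $A$-linearity. Your write-up merely spells out the degree-$-k$ identification of the right-hand vertical map a bit more explicitly than the paper does.
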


\begin{proof}
By naturality of $f_B$ with respect to the mod~$p$
Hurewicz map $h \: S \to H$, the $A$-module diagram
$$
\xymatrix{
H^*(D_G H)[L^{-1}] \cong H_c^*(W(G, H)) \ar[r]^-{f_H} \ar[d]_-{h^*}
& H^*(V(G, H)) \cong \Sigma^{-k} A[U] \ar[d]^-{h^*} \\
H_{gp}^*(G)[L^{-1}] \cong H_c^*(W(G, S)) \ar[r]^-{f_S}
& H^*(V(G, S)) \cong \Sigma^{-k} \bF_p[U]
}
$$
commutes, where $A = H^*(H)$ and~$\bF_p = H^*(S)$.  The left
hand homomorphism~$h^*$ and $f_S$ are both surjective, by
Propositions~\ref{prop:cohomWGB}(b) and~\ref{prop:fSsurj}.  Hence the
image of $f_H$ contains all of the $A$-module generators of $\Sigma^{-k}
A[U]$, in degree~$-k$, which by $A$-linearity implies that $f_H$
is surjective.
\end{proof}

The projectivity hypothesis in Theorem~\ref{thm:PWcomparison} follows
easily from the special case considered by Priddy--Wilkerson.

\begin{proposition} \label{prop:projective}
Let $B$ be bounded below with $H_*(B; \bF_p)$ of finite type.  Then
$H^*(D_G B)[L^{-1}]$ and $\Sigma^{-k} H^*(B)[U]$ are both projective
as $\bF_p[U]$-modules.
\end{proposition}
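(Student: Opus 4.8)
The plan is to reduce both claims to the already-understood case $B = S$, where projectivity over $\bF_p[U]$ is part of the Priddy--Wilkerson input (their analysis of $H_{gp}^*(G)[L^{-1}] \cong H_c^*(W(G,S))$), together with the $U$-equivariant Solomon--Tits splitting recorded in Addendum~\ref{add:VGBequiv}. First I would dispose of the easy term: by Theorem~\ref{thm:VGB}(a) and Addendum~\ref{add:VGBequiv} we have a $U$-equivariant identification $\Sigma^{-k} H^*(B)[U] \cong H^*(V(G,B))$, and the right-hand side visibly carries a free $\bF_p[U]$-action because $V(G,B)^\wedge_p \simeq U_+ \wedge \Sigma^{-k} B^\wedge_p$ with $U$ permuting the wedge summands; equivalently, $M[U] = M \otimes_{\bF_p} \bF_p[U]$ is free over $\bF_p[U]$ for any $\bF_p$-module $M$, hence projective. (Here one uses that $H_*(B;\bF_p)$, and so $H^*(B)$, is of finite type so that the module is well behaved, but freeness holds regardless.)

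For $H^*(D_G B)[L^{-1}]$ I would invoke the explicit description in Proposition~\ref{prop:cohomWGB}(b): as an $H_{gp}^*(G)[L^{-1}]$-module it is free on the basis $\{b^{\otimes G} \mid b \in \cB\}$, and each generator $b^{\otimes G}$ is $GL_k(\bZ/p)$-fixed (Proposition~\ref{prop:cohomWGB}(d)) up to sign, so in particular $U$-fixed. Thus as a $U$-module $H^*(D_G B)[L^{-1}] \cong H_{gp}^*(G)[L^{-1}] \otimes_{\bF_p} \bigl(\bigoplus_{b \in \cB} \bF_p\{b^{\otimes G}\}\bigr)$, with $U$ acting only on the first factor. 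Since a tensor product over $\bF_p$ of a projective $\bF_p[U]$-module with any $\bF_p$-vector space (with trivial $U$-action) is again projective over $\bF_p[U]$, it suffices to know that $H_{gp}^*(G)[L^{-1}]$ is $\bF_p[U]$-projective. That is exactly the case $B = S$ of the statement, which is established by Priddy--Wilkerson~\cite{PW85} (it is the content underlying their Theorem~\ref{thm:PWcomparison}, where $M = H_{gp}^*(G)[L^{-1}]$ is assumed projective over $\bF_p[U]$), and can also be seen directly: localizing the free $H_{gp}^*(G)$-action of $U$ on the regular-representation pieces and appealing to the Solomon--Tits theorem identifies the $U$-module structure with a free one.

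The one point requiring a little care — and the step I expect to be the main obstacle — is the passage from the \emph{ungraded} $\bF_p[U]$-module structure to the fact that these are infinitely generated, non-finite-type modules over $\bF_p$ in each degree only after localization: one must check that localization at $L$ does not destroy projectivity and that the decomposition into stabilizer summands in Proposition~\ref{prop:cohomWGB}(b) is genuinely a decomposition of $\bF_p[U]$-modules (not merely of $H_{gp}^*(G)$-modules), i.e.\ that $U$ permutes the stabilizer summands compatibly. For $U$ acting on the $b^{\otimes G}$-summands this is automatic since all those summands are $U$-fixed, so the subtlety evaporates; the remaining summands $H_{gp}^*(G;\bF_p[G/K]\{b'\})$ with $K$ proper are killed by $L^{-1}$ and play no role. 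Hence after localization the $U$-module is precisely the free-over-first-factor tensor product described above, and projectivity follows. I would conclude by remarking that, by the aside preceding Theorem~\ref{thm:PWcomparison}, projective and free $\bF_p[U]$-modules coincide here, so both modules are in fact free over $\bF_p[U]$.
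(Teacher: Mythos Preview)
Your proposal is correct and follows essentially the same route as the paper: reduce $H^*(D_G B)[L^{-1}]$ to $H_{gp}^*(G)[L^{-1}]$ via the decomposition of Proposition~\ref{prop:cohomWGB}(b,d) (the generators $b^{\otimes G}$ being $U$-fixed), cite Priddy--Wilkerson for the $B=S$ case, and observe that $\Sigma^{-k} H^*(B)[U]$ is visibly free. The paper is more specific about the Priddy--Wilkerson input, invoking their Galois-extension argument (\cite{PW85}*{Prop.~2.4, 2.5} together with \cite{CHR65}) rather than a general reference; your aside about Solomon--Tits is not the right mechanism here and should be dropped.
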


\begin{proof}
Suppose $p$ is odd.  By~\cite{PW85}*{Prop.~2.4} the homomorphism
$$
P(y_1, \dots, y_k)[L^{-1}]^{GL_k(\bZ/p)} \longto P(y_1, \dots, y_k)[L^{-1}]
$$
is a $GL_k(\bZ/p)$-Galois extension of commutative rings, in the sense
of~\cite{CHR65}*{Thm.~1.3, Def.~1.4}.  It follows from~\cite{CHR65}*{Thm.~2.2,
Thm.~4.2(a)} that
$$
P(y_1, \dots, y_k)[L^{-1}]^U \longto P(y_1, \dots, y_k)[L^{-1}]
$$
is a $U$-Galois extension, so that $P(y_1, \dots, y_k)[L^{-1}]$ is a
projective $\bF_p[U]$-module.  Hence~\cite{PW85}*{Prop.~2.5} implies that
$$
H_{gp}^*(G)[L^{-1}] = E(x_1, \dots, x_k) \otimes P(y_1, \dots, y_k)[L^{-1}]
$$
and
$$
H^*(D_G B)[L^{-1}] \cong H_{gp}^*(G)[L^{-1}] \{b^{\otimes G} \mid b \in \cB\}
$$
are also projective as $\bF_p[U]$-modules.  The case $p=2$ is a little
simpler, replacing $P(y_1, \dots, y_k)$ by $P(x_1, \dots, x_k)$, omitting
the factor $E(x_1, \dots, x_k)$, and noting that our Euler class~$L$ is
the square of the class considered by Priddy--Wilkerson.  The claim for
$\Sigma^{-k} H^*(B)[U] \cong \bF_p[U] \{\Sigma^{-k} b \mid b \in \cB\}$
is immediate.
\end{proof}

Our next aim is to generalize results of Li--Singer and
Adams--Gunawardena--Miller to identify $H^*(D_G B)[L^{-1}]^U$ as
an $A$-module with the $k$-fold iterated desuspended $C_p$-Singer
construction $T^k(H^*(B))$, which is $\Tor^A$-equivalent to $\Sigma^{-k}
H^*(B)$.  The notation~$T(M)$, for any $A$-module~$M$, is that
of~\cite{AGM85}*{\S2}, and is equal to the $A$-module denoted $\Sigma^{-1}
R_+(M)$ in~\cite{LNR12}*{Def.~3.1}.  We shall use the expressions
\begin{align*}
T(H^*(B)) &\cong H_c^*(W(C_p, B))
	= \colim_m H^*(\Sigma^{2m} D_{C_p}(\Sigma^{-2m} B)) \\
	&\cong H^*(D_{C_p} B)[L_1^{-1}]
	\cong H_{gp}^*(C_p; H^*(B)^{\otimes p})[L_1^{-1}] \\
	&\cong H_{gp}^*(C_p)[L_1^{-1}] \{b^{\otimes p} \mid b \in \cB\}
\end{align*}
from~\cite{LNR12}*{Thm.~5.9}, extending~\cite{BMMS86}*{Thm.~II.5.1}, as
presentations of this version of the Singer construction.  Here $L_1 =
-\beta(x_1)^{p-1} \in H_{gp}^{2(p-1)}(C_p)$ is the case $k=1$ of the
Euler class~$L$.

\begin{definition}
For any group~$H$, let $C_p \wr H = C_p \ltimes H^p$ denote
the wreath product, i.e., the semidirect product where $C_p$
acts on the $p$-th power $H^p$ by cyclically permuting the factors.
Let $C_p \times H \to C_p \wr H$ be the diagonal
inclusion mapping $(g, h)$ to $(g; h, \dots, h)$, and
let
$$
d \: (C_p)^k = C_p \times \dots \times C_p
	\longto C_p \wr \dots \wr C_p = \wr^k C_p
$$
denote its $(k-1)$-fold iterate, with $H = \wr^i C_p$ at the $i$-th
instance.
\end{definition}

\begin{lemma}
View $d$ as an inclusion of subgroups of $\Sigma_{p^k}$.  The normalizer
of $G = (C_p)^k$ in the $p$-Sylow subgroup $\wr^k C_p$ of $\Sigma_{p^k}$
is $U_k(\bZ/p) \ltimes (C_p)^k$, with Weyl group the $p$-Sylow subgroup
$U = U_k(\bZ/p)$ of $GL_k(\bZ/p)$.
\end{lemma}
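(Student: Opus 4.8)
The statement is a purely group-theoretic fact about the embedding $d\colon (C_p)^k \hookrightarrow \wr^k C_p \subset \Sigma_{p^k}$, so the plan is to compute $N_{\wr^k C_p}(G)$ directly by induction on $k$, keeping track of the Weyl group at each stage. The base case $k=1$ is trivial: $\wr^1 C_p = C_p = G$ is its own normalizer, $U_1(\bZ/p)$ is trivial, and the claim reads $N = C_p$ with trivial Weyl group, which is correct.

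For the inductive step I would write $\wr^{k} C_p = C_p \wr (\wr^{k-1} C_p) = C_p \ltimes (\wr^{k-1} C_p)^p$, and correspondingly $G = (C_p)^k = C_p \times (C_p)^{k-1}$, embedded via $d$ as the diagonal $C_p$ together with the diagonally-repeated copy of $d_{k-1}((C_p)^{k-1})$ sitting inside a single wreath factor, then cyclically permuted. The key observation is that an element $(\sigma; w_1,\dots,w_p)$ of $C_p \wr (\wr^{k-1}C_p)$ normalizing $G$ must first normalize the projection of $G$ to the top $C_p$-factor, which forces $\sigma$ to lie in the normalizer of the diagonal $C_p$ in $\Sigma_p$ acting on the $p$ wreath coordinates — but that diagonal $C_p$ is \emph{regular} on $p$ points, so its normalizer in $\Sigma_p$ is the full affine group $\mathrm{Aff}_1(\bF_p) = C_p \rtimes C_{p-1}$, of which only the $C_p$ part lies in the $p$-group $\wr^k C_p$; hence $\sigma \in C_p$. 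Then, conjugating back to $\sigma = e$, the tuple $(w_1,\dots,w_p)$ must conjugate the repeated copy $(d_{k-1}(H'),\dots,d_{k-1}(H'))$ of $H' = (C_p)^{k-1}$ to itself and act compatibly on the diagonal $C_p$, which by the inductive hypothesis forces each $w_i$ to lie in $N_{\wr^{k-1}C_p}(H') = U_{k-1}(\bZ/p)\ltimes (C_p)^{k-1}$, with an additional constraint relating the images in the Weyl group $U_{k-1}(\bZ/p)$ to the new top coordinate direction.

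Assembling these, the normalizer is generated by: the top-level $C_p$ (contributing a new bottom row to the unipotent matrix and a new cyclic factor to $G$), the diagonally-embedded copy of $N_{\wr^{k-1}C_p}(H')$ (contributing the upper-left $(k-1)\times(k-1)$ block and the old copy of $G$), and the ``shearing'' elements coming from letting the $w_i$ differ by central elements of the diagonal $C_p \subset \wr^{k-1}C_p$ along the cyclic direction (contributing the remaining off-diagonal entries of the last column). Counting: the Weyl group has order $p^{\binom{k-1}{2}} \cdot p^{k-1} = p^{\binom{k}{2}}$, matching $|U_k(\bZ/p)|$, and one checks the multiplication matches that of unipotent upper-triangular matrices by identifying the new generators with the elementary matrices $I + E_{i,k}$. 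That the Weyl group is \emph{exactly} $U_k(\bZ/p)$ rather than some other group of the same order follows by comparing with the known computation that $N_{GL_k(\bZ/p)}$-data (or equivalently, matching against the fact already recorded in the paper that the normalizer of $(C_p)^k$ in $\Sigma_{p^k}$ has Weyl group $GL_k(\bZ/p)$, whose $p$-Sylow is $U_k(\bZ/p)$, and $\wr^k C_p$ is a $p$-Sylow of $\Sigma_{p^k}$): intersecting a $p$-Sylow of the ambient group with the normalizer gives a $p$-Sylow of the normalizer, i.e.\ $U_k(\bZ/p)$.

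**Main obstacle.** The genuinely delicate point is the last one — that the Weyl group is precisely the \emph{upper triangular} unipotent subgroup and not merely an abstract group of order $p^{\binom{k}{2}}$. The cleanest route is the Sylow-intersection argument: $\wr^k C_p$ is a $p$-Sylow subgroup of $\Sigma_{p^k}$, the normalizer $N_{\Sigma_{p^k}}(G)$ has Weyl group $\Aut(G) = GL_k(\bZ/p)$, and a $p$-Sylow subgroup of $N_{\Sigma_{p^k}}(G)$ maps onto a $p$-Sylow subgroup $U_k(\bZ/p)$ of $GL_k(\bZ/p)$; one then checks that $\wr^k C_p \cap N_{\Sigma_{p^k}}(G)$ is such a $p$-Sylow subgroup (it is a $p$-subgroup of the right order by the inductive count above, or alternatively because any $p$-Sylow of $N$ is $\Sigma_{p^k}$-conjugate into $\wr^k C_p$ and one can choose the conjugating element to fix $G$). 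I would present the inductive computation to pin down the order and the group structure, and invoke the Sylow argument to identify the Weyl group with $U_k(\bZ/p)$ inside $GL_k(\bZ/p)$ on the nose; bookkeeping of the semidirect-product structure and the identification of generators with elementary matrices is the routine part.
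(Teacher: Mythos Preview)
Your ``cleanest route'' via the Sylow-intersection argument is exactly the paper's proof, which reads in its entirety: ``The normalizer of $(C_p)^k$ in $\Sigma_{p^k}$ is $GL_k(\bZ/p) \ltimes (C_p)^k$, and the lemma follows by restricting to elements in the $p$-Sylow subgroup $\wr^k C_p$ of $\Sigma_{p^k}$.'' So the core idea is the same.

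Where you differ is in the level of care. The paper simply asserts that intersecting with $\wr^k C_p$ yields $U_k(\bZ/p)\ltimes(C_p)^k$, while you correctly flag that $N_{\Sigma_{p^k}}(G)\cap\wr^k C_p$ is a priori only some $p$-subgroup of $N_{\Sigma_{p^k}}(G)$ containing~$G$, and one must check it is a full $p$-Sylow. (This is not automatic: for a general subgroup $H$ of a Sylow $P\subset\Gamma$, the group $N_P(H)$ need not be Sylow in $N_\Gamma(H)$.) Your plan to settle this either by the inductive order count or by conjugating a Sylow of $N_{\Sigma_{p^k}}(G)$ into $\wr^k C_p$ by an element fixing $G$ is sound; the paper leaves this implicit.

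Your explicit inductive computation is thus a genuine addition rather than a different approach: it supplies the order verification that justifies the paper's one-line restriction argument. A minor wrinkle: your discussion of ``$\sigma$ must lie in the normalizer of the diagonal $C_p$ in $\Sigma_p$'' is slightly off, since in $C_p\wr(\wr^{k-1}C_p)=C_p\ltimes(\wr^{k-1}C_p)^p$ the top factor is already $C_p$, not $\Sigma_p$; the real work is in analyzing the base tuple $(w_1,\dots,w_p)$ modulo the diagonal, which you describe correctly afterward.
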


\begin{proof}
The normalizer of $(C_p)^k$ in $\Sigma_{p^k}$ is $GL_k(\bZ/p) \ltimes
(C_p)^k$, and the lemma follows by restricting to elements in the
$p$-Sylow subgroup $\wr^k C_p$ of $\Sigma_{p^k}$.
\end{proof}

The diagonal inclusion~$d$ induces a natural map of extended powers
$$
D_{(C_p)^k} B \simeq E\Sigma_{p^k+} \wedge_{(C_p)^k} B^{\wedge p^k}
	\longto
E\Sigma_{p^k+} \wedge_{\wr^k C_p} B^{\wedge p^k}
	\simeq D_{C_p}(\cdots D_{C_p}(B) \cdots) \,,
$$
and a morphism of collapsing~\cite{May70}*{Lem.~1.1(iii)} homotopy orbit
spectral sequences, from
$$
{}' E_2^{*,*} = H_{gp}^*(\wr^k C_p; H^*(B)^{\otimes p^k})
	\Longrightarrow H^*(E\Sigma_{p^k+} \wedge_{\wr^k C_p} B^{\wedge p^k})
$$
to
$$
{}'' E_2^{*,*} = H_{gp}^*((C_p)^k; H^*(B)^{\otimes p^k})
	\Longrightarrow H^*(E\Sigma_{p^k+} \wedge_{(C_p)^k} B^{\wedge p^k})
\,,
$$
given at the $E_2$-terms by the homomorphism
\begin{equation} \label{eq:dstar}
d^* \: H_{gp}^*(\wr^k C_p; H^*(B)^{\otimes p^k})
	\longto H_{gp}^*((C_p)^k; H^*(B)^{\otimes p^k}) \,.
\end{equation}
More generally, for each $m\ge0$ the diagonal inclusion induces a map
\begin{multline*}
d_{B,m} \: \Sigma^{2m} D_{(C_p)^k} \Sigma^{-2m} B
	\longto
	\Sigma^{2m} D_{C_p}( \cdots D_{C_p}(\Sigma^{-2m} B) \cdots) \\
	\simeq
	\Sigma^{2m} D_{C_p} \Sigma^{-2m}( \cdots \Sigma^{2m} D_{C_p} \Sigma^{-2m}(B) \cdots)
\end{multline*}
to the $k$-fold iterate of $\Sigma^{2m} D_{C_p} \Sigma^{-2m}(-)$ applied
to~$B$, and these are compatible under the twisted diagonal maps~$\Delta$.
Passing to cohomology we obtain homomorphisms
$$
d_{B,m}^* \: H^*(\Sigma^{2m} D_{C_p} \Sigma^{-2m}
	( \cdots \Sigma^{2m} D_{C_p}\Sigma^{-2m}(B) \cdots))
\longto H^*(\Sigma^{2m} D_{(C_p)^k} \Sigma^{-2m} B)
$$
factoring through the $U$-invariants of the target, and passing to
colimits over~$m$ we obtain an $A$-module homomorphism
\begin{align*}
d_B^* \: T^k(H^*(B))
&\cong \colim_{m_1, \dots, m_k} H^*(\Sigma^{2m_1} D_{C_p} \Sigma^{-2m_1}(
	\cdots \Sigma^{2m_k} D_{C_p} \Sigma^{-2m_k}(B) \cdots)) \\
&\cong \colim_m H^*(\Sigma^{2m} D_{C_p} ( \cdots D_{C_p}(\Sigma^{-2m}
	B) \cdots)) \\
&\longto \colim_m H^*(\Sigma^{2m} D_{(C_p)^k} \Sigma^{-2m} B)
	= H^*(D_G B)[L^{-1}]
\end{align*}
from the $k$-fold iterate of $T$ applied to $H^*(B)$, with image
contained in the $U$-invariants of the target.  Here we use that the
Singer construction $T$ commutes with sequential colimits, and that the
$k$-tuples $(m, \dots, m)$ are cofinal among the $(m_1, \dots, m_k)$.

\begin{proposition} \label{prop:TkHBisoUinv}
Let $B$ be bounded below with $H_*(B; \bF_p)$ of finite type.  The
homomorphism $d_B^*$ factors through a natural isomorphism of $A$-modules
$$
T^k(H^*(B)) \overset{\cong}\longto
	H^*(D_G B)[L^{-1}]^U \,.
$$
\end{proposition}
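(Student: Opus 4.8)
The strategy is to prove the isomorphism one Singer-construction factor at a time, reducing the $k$-fold statement to the known $k=1$ case. Writing $G = (C_p)^k = C_p \times H$ with $H = (C_p)^{k-1}$, I would factor the diagonal $d$ as $C_p \times H \to C_p \wr H \to \wr^k C_p$ and correspondingly factor $d_B^*$ through the $U$-invariants in stages. The base case $k=1$ is precisely the identification $T(H^*(B)) \cong H^*(D_{C_p} B)[L_1^{-1}]$ recalled from \cite{LNR12}*{Thm.~5.9} (there is no $U$ to take invariants over, since $U_1(\bZ/p)$ is trivial), so the work is entirely in the inductive step.

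\textbf{Key steps.} First I would establish a \emph{one-variable} statement: for any bounded-below $M$ of finite type (to be applied with $M = H^*$ of an iterated extended power), the map coming from the diagonal $C_p \times H \hookrightarrow C_p \wr H$, followed by localization away from the Euler class and passage to invariants under the Weyl group of $(C_p)^k$ inside $\wr^k C_p$, gives an isomorphism onto the appropriate invariants. Concretely, using Steenrod's identification $H^*(D_{C_p} Y) \cong H_{gp}^*(C_p; H^*(Y)^{\otimes p})$ and Proposition~\ref{prop:cohomWGB}(b), localizing away from the Euler class kills every summand indexed by a $b'$ with proper stabilizer, leaving only the ``diagonal'' summands $b^{\otimes p}$; this is exactly the mechanism already used in the proof of Proposition~\ref{prop:cohomWGB}(b,c). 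Matching this against the colimit description of $T(H^*(Y))$ and keeping track of the residual $GL$-action gives the one-step isomorphism onto $U_k(\bZ/p)$-invariants relative to $U_{k-1}(\bZ/p)$-invariants. Second, I would iterate: the group $U = U_k(\bZ/p)$ sits in $U_k(\bZ/p) \cong U_{k-1}(\bZ/p) \ltimes (C_p)^{k-1}$ compatibly with the tower of wreath products, so taking $U$-invariants of $H^*(D_G B)[L^{-1}]$ amounts to first taking invariants for the ``last'' $C_p$-factor's normalizer contribution and then for $U_{k-1}$. Composing the one-step isomorphism with the inductive hypothesis applied to $H^*(D_{C_p} B)[L_1^{-1}] \cong T(H^*(B))$ (viewed as the coefficient module for the remaining $(C_p)^{k-1}$) yields $T^{k}(H^*(B)) \cong H^*(D_G B)[L^{-1}]^U$. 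Throughout, $A$-linearity is automatic because every map in sight ($d_B^*$, the Thom isomorphisms, the colimit maps) is induced by maps of spectra or by Steenrod's natural transformations, and the Singer construction $T$ is an endofunctor on $A$-modules; commutation of $T$ with sequential colimits and cofinality of the diagonal $k$-tuples $(m,\dots,m)$, both already noted in the excerpt, let the colimits be reorganized freely.

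\textbf{The main obstacle.} The delicate point is bookkeeping of the \emph{group actions and the Euler classes} under the factorization $d$. One must check that the Euler class $L$ of the reduced regular representation $\rho$ of $(C_p)^k$, after pushing forward along $d^*$, is compatible with the product of the one-variable Euler classes $L_1$ for the successive $C_p$'s — i.e., that localizing at $L$ is the same as iteratively localizing at the $L_1$'s at each wreath stage — and that the surviving summands really are indexed by the fully diagonal tensors $b^{\otimes G}$ and no others. Equivalently, one needs that a tensor $b'$ has full stabilizer $(C_p)^k$ inside $\Sigma_{p^k}$ iff it is diagonal at every wreath level, which is a straightforward but careful combinatorial check. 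Getting the $GL_k(\bZ/p)$- (hence $U$-) equivariance bookkeeping exactly right, so that the image of $d_B^*$ is \emph{precisely} the $U$-invariants and not merely contained in them, is where the argument must be executed with care; the containment was already observed before the proposition, so the real content is surjectivity onto the invariants, which follows by comparing $\bF_p[U]$-ranks using Proposition~\ref{prop:projective} together with the known rank of $T^k(H^*(B))$ relative to $\Sigma^{-k} H^*(B)$.
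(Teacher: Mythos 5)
There is a genuine gap, and it sits exactly where you locate ``the real content'': the assertion that the image of $d_B^*$ is \emph{all} of $H^*(D_G B)[L^{-1}]^U$. Your proposed mechanism for this --- ``comparing $\bF_p[U]$-ranks using Proposition~\ref{prop:projective} together with the known rank of $T^k(H^*(B))$ relative to $\Sigma^{-k} H^*(B)$'' --- does not work. First, $T^k(H^*(B))$ is not an $\bF_p[U]$-module in any relevant sense (it is the source of a map landing in the $U$-\emph{invariants}), so it has no $\bF_p[U]$-rank to compare; and its relation to $\Sigma^{-k}H^*(B)$ is only a $\Tor^A$-equivalence, which carries no dimension information degree by degree. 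Second, even a degreewise $\bF_p$-dimension count fails: for $k\ge 2$ multiplication by $L$ is injective on $H_{gp}^*(G)$ but the dimensions $\dim H_{gp}^{n+2m(|G|-1)}(G)$ grow without bound in $m$, so $H^*(D_G B)[L^{-1}]$ (and likewise $T^k(H^*(B))$) is infinite-dimensional in every single degree. An injection between such objects cannot be promoted to a surjection by counting. The statement that $d_S^*$ maps $T^k(\bF_p)$ isomorphically onto $H_{gp}^*(G)[L^{-1}]^{U}$ is a genuine theorem of modular invariant theory --- Singer \cite{Sin81}*{Prop.~9.1} for $p=2$ and Adams--Gunawardena--Miller \cite{AGM85}*{Thm.~1.4}, extending Li--Singer \cite{LS82}, for $p$ odd --- and any proof along your lines would have to reprove it; your wreath-product induction on $k$ essentially reproduces how these authors set up the problem, but the identification of the invariants is the hard step and is not supplied.

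The paper's route is different and worth noting: it \emph{quotes} the $B=S$ case from the references above, and then reduces general $B$ to $B=S$ not by inducting on $k$ but by writing the lift of $d_B^*$ as a map $T^k(\bF_p)\{b^{\otimes p^k}\mid b\in\cB\}\to H_{gp}^*(G)[L^{-1}]^U\{b^{\otimes p^k}\mid b\in\cB\}$, filtering both sides by the cohomological degree of $b^{\otimes p^k}$, observing that $d_B^*(x\cdot b^{\otimes p^k})\equiv d_S^*(x)\cdot b^{\otimes p^k}$ modulo lower filtration, and inducting on that degree. This avoids the coefficient-system bookkeeping of your inductive step (where $T(H^*(B))^{\otimes p}$ carries a twisted $C_p$-action at the next wreath level) and isolates all the invariant theory in the single, citable case $B=S$. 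If you want to keep your structure, you must replace the rank count by an appeal to \cite{Sin81} and \cite{AGM85}, and then your remaining concerns (compatibility of $L$ with the tower of Euler classes, cofinality of the diagonal multi-indices) are indeed routine.
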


\begin{proof}
In the case $B = S$, William Singer's result \cite{Sin81}*{Prop.~9.1} (for
$p=2$) and the extension \cite{AGM85}*{Thm.~1.4} of~\cite{LS82}*{(1.6)}
(for $p$ odd) show that
$$
d_S^* \: T^k(\bF_p) \overset{\cong}\longto H_{gp}^*(G)[L^{-1}]^U
	\subset H_{gp}^*(G)[L^{-1}]
$$
maps $T^k(\bF_p)$ isomorphically
to the $U$-invariants of $H_{gp}^*(G)[L^{-1}]$.
In general, we can rewrite the lift of $d_B^*$ as
\begin{multline*}
H_{gp}^*(C_p; (\cdots H_{gp}^*(C_p; H^*(B)^{\otimes p})[L_1^{-1}]
	\cdots)^{\otimes p})[L_1^{-1}] \\
\longto H_{gp}^*((C_p)^k; H^*(B)^{\otimes p^k})[L^{-1}]^U \,,
\end{multline*}
or as
$$
T^k(\bF_p)
\{b^{\otimes p^k} \mid b \in \cB\}
\longto
H_{gp}^*(G)[L^{-1}]^U
\{b^{\otimes p^k} \mid b \in \cB\} \,.
$$
Here both sides are filtered by the cohomological degree of $b^{\otimes
p^k}$, and the homomorphism respects these filtrations.  Then,
just as in~\eqref{eq:dstar},
$$
d_B^*(x \cdot b^{\otimes p^k}) \equiv d_S^*(x) \cdot b^{\otimes p^k}
\quad\text{modulo lower filtrations}
$$
for $x \in T^k(\bF_p)$ and $b \in H^q(B)$, and it follows by induction
on the degree~$q$ that the lift of~$d_B^*$ is an isomorphism.
\end{proof}

\begin{proposition} \label{prop:fHUToreq}
$$
f_H^U \: H^*(D_G H)[L^{-1}]^U
	\longto \Sigma^{-k} A[U]^U \cong \Sigma^{-k} A
$$
is a $\Tor^A$-equivalence.
\end{proposition}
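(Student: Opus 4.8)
The plan is to show that $f_H^U$ is a $\Tor^A$-equivalence by combining the
already-established surjectivity of $f_H$ with the observation that both the
source and the target of $f_H^U$ carry the minimal possible $\Tor^A$-groups,
namely a single copy of $\bF_p$.

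First I would apply Proposition~\ref{prop:TkHBisoUinv} with $B = H$, so that
$H^*(B) = A$, to identify $H^*(D_G H)[L^{-1}]^U$ with the $k$-fold iterated
desuspended $C_p$-Singer construction $T^k(A)$ as an $A$-module, via $d_H^*$.
Since $T^k(A)$ is $\Tor^A$-equivalent to $\Sigma^{-k} A$, and $\Sigma^{-k} A$
is free over~$A$, the groups $\Tor^A_{s,t}(\bF_p, H^*(D_G H)[L^{-1}]^U)$ vanish
except for $(s,t) = (0,-k)$, where the group is one-dimensional over~$\bF_p$;
the same is true of the target $\Sigma^{-k} A[U]^U \cong \Sigma^{-k} A$.
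Consequently $f_H^U$ is a $\Tor^A$-equivalence as soon as the map it induces on
$\Tor^A_{0,-k}(\bF_p,-)$ is nonzero, and since $\bF_p \otimes_A (-)$ is right
exact while all higher $\Tor^A$-groups vanish on both sides, this in turn
follows once $f_H^U$ is known to be surjective.

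To pass from surjectivity of $f_H$ (Proposition~\ref{prop:fHsurj}) to
surjectivity of $f_H^U$, I would invoke Proposition~\ref{prop:projective}: the
$\bF_p[U]$-modules $M := H^*(D_G H)[L^{-1}]$ and $N := \Sigma^{-k} A[U]$ are
projective.  Since $U = U_k(\bZ/p)$ is a $p$-group, projective
$\bF_p[U]$-modules are cohomologically trivial, so the norm homomorphisms
induce natural isomorphisms $M_U \overset{\cong}\longto M^U$ and
$N_U \overset{\cong}\longto N^U$ from $U$-coinvariants to $U$-invariants.
Naturality places $(f_H)_U$ and $f_H^U$ in a commuting square with these
isomorphisms, and since $(-)_U$ is right exact the surjection $f_H$ induces a
surjection $(f_H)_U$, hence a surjection $f_H^U$.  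Together with the previous
paragraph this completes the argument.

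The genuinely substantive input has, in effect, already been supplied by
Proposition~\ref{prop:TkHBisoUinv} together with the fundamental
$\Tor^A$-equivalence $T^k(A) \to \Sigma^{-k} A$ for the Singer construction;
the remaining steps are formal, resting on the fact that over $\bF_p[U]$, with
$U$ a $p$-group, the invariants of a projective module compute its coinvariants,
so that surjectivity is inherited on passing to $U$-fixed points.  The one point
that requires a little care is precisely this last step: the evident composite
$M^U \hookrightarrow M \longto M_U$ is \emph{not} an isomorphism in general ---
for $k \ge 2$ it annihilates the norm element, since $p$ divides
$|U| = p^{\binom{k}{2}}$ --- so one must identify $f_H^U$ with $(f_H)_U$
through the norm isomorphism $M_U \to M^U$, taken in the correct direction.
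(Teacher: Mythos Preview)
Your proof is correct and follows essentially the same route as the paper's: identify the source with $T^k(A)$ via Proposition~\ref{prop:TkHBisoUinv}, use that it is $\Tor^A$-equivalent to $\Sigma^{-k} A$, reduce to showing $f_H^U$ is nonzero, and deduce this from Propositions~\ref{prop:fHsurj} and~\ref{prop:projective}. The paper frames the reduction via $\Hom_A(H^*(D_G H)[L^{-1}]^U, \Sigma^{-k} A) \cong \bF_p$ (using~\eqref{eq:Extiso}) rather than your dual $\Tor^A_{0,-k}$-computation, and leaves the passage from surjectivity of $f_H$ to nonvanishing of $f_H^U$ implicit where you spell out the norm isomorphism $(-)_U \cong (-)^U$ on projective $\bF_p[U]$-modules, but the substance is the same.
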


\begin{proof}
For any $A$-module~$M$, the $\Tor^A$-equivalence $\epsilon \: T(M)
\to \Sigma^{-1} M$ of~\cite{AGM85}*{Thm.~1.3} can be iterated $k$-fold
to give a $\Tor^A$-equivalence $\epsilon^k \: T^k(M) \to \Sigma^{-k} M$.
When combined with Proposition~\ref{prop:TkHBisoUinv}, this gives a
$\Tor^A$-equivalence
$$
H^*(D_G B)[L^{-1}]^U \cong
	T^k(H^*(B))
	\overset{\epsilon^k}\longto \Sigma^{-k} H^*(B) \,.
$$
It follows that the source and target of $f_B^U$ are abstractly
$\Tor^A$-equivalent, but it remains to verify, in the special case $B
= H$, that $f_H^U$ induces this equivalence.  Using~\eqref{eq:Extiso}
with $Q = \Sigma^{-k} A$ we see that
$$
\Hom_A(H^*(D_G H)[L^{-1}]^U, \Sigma^{-k} A)
\cong
\Hom_A(\Sigma^{-k} A, \Sigma^{-k} A) \cong \bF_p \,,
$$
so that $f_H^U$ is a multiple in~$\bF_p$ times a $\Tor^A$-equivalence.
The conclusion now follows, since Propositions~\ref{prop:fHsurj}
and~\ref{prop:projective} imply that $f_H^U$ is nonzero.
\end{proof}

\section{The elementary abelian case}
\label{sec:elemabel}

We continue to assume that $G = (C_p)^k$ with $k\ge1$, and that
Theorem~\ref{thm:segal-smash-pgroup} holds for each proper subquotient
of~$G$.

\begin{proposition} \label{prop:fHToreq}
$f_H \: H^*(D_G H)[L^{-1}] \to \Sigma^{-k} A[U]$ is a
$\Tor^A$-equivalence.
\end{proposition}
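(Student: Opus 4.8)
The plan is to deduce this from the Priddy--Wilkerson comparison theorem (Theorem~\ref{thm:PWcomparison}), applied with $U = U_k(\bZ/p)$, $A[U] = A \otimes \bF_p[U]$, $M = H^*(D_G H)[L^{-1}] \cong H_c^*(W(G,H))$ and $N = \Sigma^{-k} A[U] \cong H^*(V(G,H))$, and with $f = f_H$ the $A[U]$-linear homomorphism of Definition~\ref{def:fB}. The hypotheses of that theorem need to be checked one at a time: surjectivity of $f_H$, projectivity of $M$ and $N$ as $\bF_p[U]$-modules, and the statement that $f_H^U$ is a $\Tor^A$-equivalence on $U$-fixed points. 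Each of these is one of the preceding propositions.

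Concretely, I would argue as follows. First, $f_H$ is $A[U]$-linear: it is $A$-linear by Definition~\ref{def:fB}, and $U$-equivariant since $U \subset GL_k(\bZ/p) = \Aut(G)$ acts compatibly on both sides by Proposition~\ref{prop:cohomWGB}(d), Addendum~\ref{add:VGBequiv}, and the $\Aut(G)$-linearity asserted in Definition~\ref{def:fB}. Second, $f_H$ is surjective by Proposition~\ref{prop:fHsurj}. Third, $M = H^*(D_G H)[L^{-1}]$ and $N = \Sigma^{-k} A[U] = \Sigma^{-k} H^*(H)[U]$ are projective over $\bF_p[U]$ by Proposition~\ref{prop:projective} (applied with $B = H$, using that $H_*(H;\bF_p) = A_*$ is of finite type). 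Fourth, on $U$-fixed points $f_H^U \: H^*(D_G H)[L^{-1}]^U \to (\Sigma^{-k} A[U])^U \cong \Sigma^{-k} A$ is a $\Tor^A$-equivalence by Proposition~\ref{prop:fHUToreq}. Theorem~\ref{thm:PWcomparison} then yields that
$$
(f_H)_* \: \Tor^A_{*,*}(\bF_p, H^*(D_G H)[L^{-1}]) \longto \Tor^A_{*,*}(\bF_p, \Sigma^{-k} A[U])
$$
is an isomorphism, i.e., $f_H$ is a $\Tor^A$-equivalence, which is exactly the assertion.

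The one point that deserves a word of care, rather than being entirely routine, is the identification $(\Sigma^{-k} A[U])^U \cong \Sigma^{-k} A$ used when invoking Proposition~\ref{prop:fHUToreq}: this is the standard fact that the $U$-fixed points of a free $\bF_p[U]$-module $\bF_p[U]$ on one generator are $\bF_p$ times the norm element, so that $(A[U])^U \cong A$ as an $A$-module, and $f_H^U$ is then a map of $A$-modules between the $U$-invariants. I expect the main (though still modest) obstacle to be simply assembling the hypotheses in the precise form Theorem~\ref{thm:PWcomparison} demands --- in particular making sure the $U$-action on $H^*(V(G,B))$ is the one coming from $\mathrm{Addendum}~\ref{add:VGBequiv}$ under which $H^*(V(G,H)) \cong \Sigma^{-k} H^*(H)[U]$ is visibly $\bF_p[U]$-free, and that $f_H$ really is $U$-equivariant for this action; once that bookkeeping is in place the proof is just a citation of the four preceding propositions and one application of the comparison theorem.
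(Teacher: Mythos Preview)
Your proof is correct and follows essentially the same approach as the paper: apply the Priddy--Wilkerson comparison theorem (Theorem~\ref{thm:PWcomparison}) to $f = f_H$, citing Proposition~\ref{prop:fHsurj} for surjectivity, Proposition~\ref{prop:projective} for $\bF_p[U]$-projectivity of source and target, and Proposition~\ref{prop:fHUToreq} for the $\Tor^A$-equivalence on $U$-invariants. The paper's proof is simply a terser version of yours, omitting the bookkeeping remarks about $A[U]$-linearity and the identification $(\Sigma^{-k} A[U])^U \cong \Sigma^{-k} A$.
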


\begin{proof}
This follows from the Priddy--Wilkerson comparison
theorem, i.e., Theorem~\ref{thm:PWcomparison},
for the $A[U]$-module homomorphism $f = f_H$, since $f_H$ is
surjective by Proposition~\ref{prop:fHsurj}, its source and target
are $\bF_p[U]$-projective by Proposition~\ref{prop:projective},
and its $U$-invariant part $f_H^U$ is a $\Tor^A$-equivalence by
Proposition~\ref{prop:fHUToreq}.
\end{proof}

\begin{theorem} \label{thm:deltaforHeqce}
$\delta^\wedge_p \: V(G, H)^\wedge_p \to W(G, H)^\wedge_p$
is an equivalence.
\end{theorem}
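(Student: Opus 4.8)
The plan is to deduce the equivalence $\delta^\wedge_p\colon V(G,H)^\wedge_p \to W(G,H)^\wedge_p$ from the algebraic statement that $f_H$ is a $\Tor^A$-equivalence, which is exactly Proposition~\ref{prop:fHToreq}. Both $V(G,H)$ and $W(G,H)^\wedge_p$ are bounded below spectra with mod~$p$ homology of finite type: for $V(G,H)$ this is Theorem~\ref{thm:VGB}(a) together with the finite-type hypothesis on $H$, and for $W(G,H)^\wedge_p$ it follows from Proposition~\ref{prop:cohomWGB} and the fact that each stage $\Sigma^{2m} D_G(\Sigma^{-2m} H)$ of the tower~\eqref{eq:extpowtower} has finite-type mod~$p$ homology. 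Thus each has a strongly convergent mod~$p$ Adams spectral sequence — for the target the inverse limit Adams spectral sequence of Proposition~\ref{prop:limitAdamsspseq} — and $\delta$ induces a morphism between them, namely $E_2(\delta) = f_H^*$ as recorded just before Definition~\ref{def:fB}.

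First I would invoke~\eqref{eq:Extiso}: since $f_H\colon H^*(D_G H)[L^{-1}] \to \Sigma^{-k} A[U]$ is a $\Tor^A$-equivalence by Proposition~\ref{prop:fHToreq}, and $\bF_p$ is bounded below and of finite type as an $A$-module, the induced map
$$
f_H^* \: \Ext_A^{*,*}(\Sigma^{-k} A[U], \bF_p)
\longto \Ext_A^{*,*}(H^*(D_G H)[L^{-1}], \bF_p)
$$
is an isomorphism. The left-hand side is $\Ext_A^{*,*}(H^*(V(G,H)), \bF_p)$, the $E_2$-term of the Adams spectral sequence for $V(G,H)^\wedge_p$, and the right-hand side is the $E_2$-term $E_2^{*,*}$ of the inverse limit Adams spectral sequence for $W(G,H)^\wedge_p$, by Proposition~\ref{prop:limitAdamsspseq} and Proposition~\ref{prop:cohomWGB}(a). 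Hence $E_2(\delta)$ is an isomorphism of $E_2$-terms. A morphism of spectral sequences that is an isomorphism on $E_2$ is an isomorphism on all later pages and in particular on $E_\infty$; by strong convergence of both spectral sequences (Proposition~\ref{prop:limitAdamsspseq} for the target, ordinary Adams convergence for the source, both applicable by the finite-type and bounded-below hypotheses), it follows that $\pi_*(\delta)^\wedge_p\colon \pi_* V(G,H)^\wedge_p \to \pi_* W(G,H)^\wedge_p$ is an isomorphism, so $\delta^\wedge_p$ is an equivalence.

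The only point requiring a little care is that the morphism of Adams spectral sequences induced by $\delta$ really is $f_H^*$ on $E_2$-terms; but this is precisely the content of the discussion preceding Definition~\ref{def:fB}, where $E_2(\delta)$ is identified with the map induced by the $A$-linear homomorphism $f_B = \delta^*\kappa$, and all the ingredients ($\kappa$, the Thom isomorphisms identifying the colimit with $H^*(D_G B)[L^{-1}]$, and the identification of $H^*(V(G,B))$ via Theorem~\ref{thm:VGB}(a)) are already in place for $B = H$. So the bulk of the work has been done in the preceding sections, and this theorem is the short homotopy-theoretic conclusion drawn from the algebraic $\Tor^A$-equivalence; the main obstacle was establishing Proposition~\ref{prop:fHToreq}, which is behind us.
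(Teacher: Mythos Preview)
Your proof is correct and follows essentially the same approach as the paper: identify $E_2(\delta)$ with $f_H^*$, invoke Proposition~\ref{prop:fHToreq} and~\eqref{eq:Extiso} to see it is an isomorphism, and conclude via strong convergence of the two spectral sequences. One minor quibble: your assertion that $W(G,H)^\wedge_p$ is itself bounded below with finite-type mod~$p$ homology is not needed (and not obvious a priori, since it is a homotopy limit); fortunately you correctly rely on Proposition~\ref{prop:limitAdamsspseq} for strong convergence of the target spectral sequence, so the argument stands.
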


\begin{proof}
By~Proposition~\ref{prop:limitAdamsspseq} and Definition~\ref{def:fB}
we have a morphism
$$
E_2(\delta) = f_H^* \: \Ext_A^{*,*}(\Sigma^{-k} A[U], \bF_p)
	\longto \Ext_A^{*,*}(H^*(D_G H)[L^{-1}], \bF_p)
$$
of Adams and inverse limit Adams spectral sequences, converging to
the homomorphism
$$
\pi_*(\delta^\wedge_p) \: \Sigma^{-k} \bF_p[U]
	\cong \pi_* V(G, H)^\wedge_p \longto \pi_* W(G, H)^\wedge_p \,.
$$
Here $f_H^*$ is an isomorphism by Proposition~\ref{prop:fHToreq}
and~\eqref{eq:Extiso}, which implies that $\pi_*(\delta^\wedge_p)$
is an isomorphism, as claimed.
\end{proof}

This proves Theorem~\ref{thm:segal-smash-pgroup} for $G = (C_p)^k$
and $B = H$.  Given this toehold result, we can deduce the theorem
for bounded below $B$ with $H_*(B; \bF_p)$ of finite type by the
inductive strategy of Nikolaus--Scholze~\cite{NS18}*{\S III.1}.
Recall that $\cT_k$ denotes the rank~$k$ Tits building, which is
a finite complex.

\begin{proposition} \label{prop:htpycofibseq}
Let $B' \to B \to B''$ be a homotopy cofiber sequence of bounded below
spectra with mod~$p$ homology of finite type.  Then
$$
V(G, B')^\wedge_p
	\longto V(G, B)^\wedge_p
	\longto V(G, B'')^\wedge_p
$$
and
$$
W(G, B')
	\longto W(G, B)
	\longto W(G, B'')
$$
are homotopy cofiber sequences.
\end{proposition}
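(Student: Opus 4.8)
The plan is to reduce both statements to the fact that homotopy cofiber sequences are preserved by the exact functors out of which $V(G,-)$ and $W(G,-)$ are built, so that exactness propagates through all the homotopy limits and $p$-completions involved. For the $W$-statement I would argue as follows. The $G$-fold extended power $D_G(-) = EG_+ \wedge_G (-)^{\wedge G}$ is not itself exact, but the \emph{filtered} extended power is: for a based CW space like $S^{2m}$, the associated graded of the skeletal filtration on $D_G(Y \wedge S^{2m})$ splits as a wedge of smash products of the $D_H(Y)$ for subgroups $H \subseteq G$ together with induced pieces, and a homotopy cofiber sequence $B' \to B \to B''$ of spectra induces a compatible filtered homotopy cofiber sequence on the $\Sigma^{2m} D_G(\Sigma^{-2m}-)$ layer by layer. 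Concretely, the relevant input is that $\Sigma^{2m} D_G(\Sigma^{-2m}(-))$ takes a homotopy cofiber sequence of bounded-below spectra of finite type to a homotopy cofiber sequence, which is the stable splitting / exactness of extended powers recorded in~\cite{BMMS86}; this is uniform in $m$ and compatible with the twisted diagonal maps~$\Delta$. Then $W(G,-) = \holim_m \Sigma^{2m} D_G(\Sigma^{-2m}-)$ is a homotopy limit of a tower of homotopy cofiber sequences, and since homotopy limits preserve homotopy cofiber sequences of spectra, $W(G,B') \to W(G,B) \to W(G,B'')$ is a homotopy cofiber sequence. (No $p$-completion appears in the $W$-claim, so nothing further is needed there.)

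For the $V$-statement I would invoke Theorem~\ref{thm:VGB}(a) together with Addendum~\ref{add:VGBequiv}, which identify $V(G,B)^\wedge_p$ functorially — after $p$-completion — with $F(\Sigma^2 \cT_k, B)^\wedge_p \simeq \bigvee^{p^{\binom{k}{2}}} \Sigma^{-k} B^\wedge_p$ when $G = (C_p)^k$, and with $*$ when $G$ is not elementary abelian. In the non-elementary-abelian case all three terms are contractible after $p$-completion and the statement is trivial. In the elementary abelian case, $\Sigma^2 \cT_k$ is a finite complex, so $F(\Sigma^2\cT_k, -)$ is exact; applying it to the homotopy cofiber sequence $B' \to B \to B''$ and then $p$-completing (which preserves homotopy cofiber sequences of spectra) gives the desired homotopy cofiber sequence $V(G,B')^\wedge_p \to V(G,B)^\wedge_p \to V(G,B'')^\wedge_p$. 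One should check that the identification in Theorem~\ref{thm:VGB}(a) is natural in $B$ — this is already asserted ("natural equivalences") in that theorem and in the Addendum — so that the equivalence of cofiber sequences is legitimate.

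The step I expect to be the main obstacle is establishing the \emph{functorial} exactness of $\Sigma^{2m} D_G(\Sigma^{-2m}(-))$ on homotopy cofiber sequences with the bounded-below finite-type hypotheses, uniformly in $m$ and compatibly with the structure maps $\Delta$ of the tower~\eqref{eq:extpowtower}. Extended powers are only "exact up to filtration": $D_G(B)$ built from a cofiber sequence acquires a finite filtration whose graded pieces involve $D_H(B')$, $D_H(B'')$ and cross-terms, and one must argue that passing to the homotopy limit over $m$ — where the Euler-class multiplication kills exactly the proper-subgroup contributions, cf.~Proposition~\ref{prop:cohomWGB}(b) — collapses these correction terms so that only a clean cofiber sequence survives in $W(G,-)$. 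The cleanest route is probably to work on mod~$p$ cohomology: the long exact sequence in $H^*(-;\bF_p)$ for the putative cofiber sequences, combined with the colimit/localization-away-from-$L$ computation of $H_c^*(W(G,-))$ in Proposition~\ref{prop:cohomWGB} and the inverse limit Adams spectral sequence of Proposition~\ref{prop:limitAdamsspseq}, lets one verify exactness on homotopy groups after $p$-completion, which (together with boundedness below and finite type) is enough to conclude the spectrum-level homotopy cofiber sequence. The $V$-side is comparatively routine given the finiteness of $\cT_k$.
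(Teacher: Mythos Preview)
Your treatment of the $V$-statement is correct and matches the paper's: reduce to $F(\Sigma^2\cT_k,-)$ via Theorem~\ref{thm:VGB}, use that $\cT_k$ is finite, and $p$-complete.

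For the $W$-statement there is a genuine gap. Your first paragraph asserts that $\Sigma^{2m}D_G(\Sigma^{-2m}(-))$ sends a homotopy cofiber sequence to a homotopy cofiber sequence for each fixed~$m$; this is false. Extended powers are not exact functors --- $D_G$ applied to a two-cell complex already has many more cells than a cofiber of $D_G$'s --- so $W(G,-)$ is \emph{not} a homotopy limit of a tower of cofiber sequences, and the argument ``holim preserves cofiber sequences'' does not apply directly. You partially acknowledge this later (``exact up to filtration''), but your proposed remedy via $H^*_c$ and the inverse limit Adams spectral sequence would only establish the cofiber sequence after $p$-completion, whereas the proposition as stated is for $W(G,-)$ itself, not $W(G,-)^\wedge_p$.

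The paper's argument, following \cite{BMMS86}*{Prop.~II.3.11}, is more direct than either of your routes. One filters $D_G(B)$ by $\Gamma^i(B)$ with $\Gamma^{p^k}=D_G(B')$, $\Gamma^0=D_G(B)$, and $\Gamma^0/\Gamma^1\simeq D_G(B'')$; the intermediate subquotients $\Gamma^i/\Gamma^{i+1}$ for $0<i<p^k$ are wedges of spectra of the form $EG_+\wedge_K(B')^{\wedge i}\wedge(B'')^{\wedge p^k-i}$ with $K\subsetneq G$ proper. The key point is that the twisted diagonal~$\Delta$ restricted to such a piece is induced by $e\:S^0\to S^\rho$, which is $K$-equivariantly null-homotopic when $K$ is proper. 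Hence the tower of each intermediate subquotient has vanishing holim \emph{on the nose}, not merely after $p$-completion, and the remaining terms give the cofiber sequence. This is the missing mechanism that replaces your level-wise exactness claim.
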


\begin{proof}
It is clear that
$$
F(\Sigma^2 \cT_k, B')
	\longto F(\Sigma^2 \cT_k, B)
	\longto F(\Sigma^2 \cT_k, B'')
$$
is a homotopy cofiber sequence.  This implies the corresponding
result for $V(G, -)^\wedge_p$ by Theorem~\ref{thm:VGB}.

For the second claim we follow the proof of~\cite{BMMS86}*{Prop.~II.3.11}.
We may assume $B' \to B$ is a cofibration, with $B/B' = B''$.
There is a filtration
$$
D_G(B') = \Gamma^{p^k}(B) \to \dots \to \Gamma^{i+1}(B) \to \Gamma^i(B)
	\to \dots \to \Gamma^0(B) = D_G(B)
$$
with quotients
$$
\Gamma^i(B)/\Gamma^{i+1}(B) \simeq EG_+ \wedge_G \bigvee^{\binom{p^k}{i}}
	(B')^{\wedge i} \wedge (B'')^{\wedge p^k-i} \,.
$$
For $i=0$, this is $D_G(B'')$.  For $0<i<p^k$ it is a finite wedge sum
of spectra, each of the form
$$
EG_+ \wedge_K (B')^{\wedge i} \wedge (B'')^{\wedge p^k-i}
$$
with $K \subset G$ a proper subgroup.  These filtrations and splittings
are compatible with the twisted diagonal maps~$\Delta$.  Since the
inclusion $e \: S^0 \to S^\rho$ is $K$-equivariantly null-homotopic,
it follows that
$$
\holim_m \frac{\Sigma^{2m} \Gamma^i(\Sigma^{-2m} B)}
	{\Sigma^{2m} \Gamma^{i+1}(\Sigma^{-2m} B)} \simeq {*}
$$
for each $0<i<p^k$.  Hence $W(G, B)/W(G, B') \to W(G, B'')$ is an
equivalence.
\end{proof}

\begin{proposition} \label{prop:Postnikov}
Let $B$ be a bounded below spectrum, with Postnikov tower
$$
B \to \cdots \to \tau_{\le {n+1}} B \to \tau_{\le n} B \to \cdots \,.
$$
The natural maps
$$
V(G, B)^\wedge_p \overset{\simeq}\longto
	\holim_n V(G, \tau_{\le n} B)^\wedge_p
$$
and
$$
W(G, B) \overset{\simeq}\longto
	\holim_n W(G, \tau_{\le n} B)
$$
are equivalences.
\end{proposition}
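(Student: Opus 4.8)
I would establish the two equivalences separately. For $V(G,-)^\wedge_p$ I would use the natural equivalence $V(G,-)^\wedge_p \simeq F(\Sigma^2\cT_k, -)^\wedge_p$ of Theorem~\ref{thm:VGB}(a) to reduce to a formal statement. The functor $F(\Sigma^2\cT_k, -)$ is a mapping spectrum out of a fixed object, hence preserves all homotopy limits, and derived $p$-completion $(-)^\wedge_p = \holim_r (- \wedge S/p^r)$ likewise preserves homotopy limits, since $S/p^r$ is a finite spectrum. Because a bounded below spectrum is the homotopy limit of its Postnikov tower, $B \simeq \holim_n \tau_{\le n} B$, this yields
$$
V(G,B)^\wedge_p
	\simeq F(\Sigma^2\cT_k, B)^\wedge_p
	\simeq \holim_n F(\Sigma^2\cT_k, \tau_{\le n} B)^\wedge_p
	\simeq \holim_n V(G, \tau_{\le n} B)^\wedge_p \,.
$$
(The finite type hypothesis implicit in invoking Theorem~\ref{thm:VGB} can be avoided, if one wants, by running the connectivity argument below directly on the smash power functor $(-)^{\wedge G}$ that enters the definition of $V(G,-)$, since smashing is exact and smashing with $EG_+$ preserves connectivity.)

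For $W(G,-)$, recall that $W(G,C) = \holim_m \Sigma^{2m} D_G(\Sigma^{-2m} C)$. Since homotopy limits commute with one another, and the towers $\{\Sigma^{-2m}\tau_{\le n} C\}_n$ and $\{\tau_{\le n}(\Sigma^{-2m} C)\}_n$ are cofinal, the claim reduces to showing that
$$
D_G C \longto \holim_n D_G(\tau_{\le n} C)
$$
is an equivalence for every bounded below spectrum~$C$. To prove this I would fix $n$, write $C' \to C \to \tau_{\le n} C$ for the homotopy cofiber sequence in which $C' = \tau_{\ge n+1} C$ is the $n$-connected cover (realized as a cofibration), and apply the filtration of $D_G C$ constructed in the proof of Proposition~\ref{prop:htpycofibseq}, following \cite{BMMS86}*{Prop.~II.3.11}. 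Its bottom filtration quotient is $D_G(\tau_{\le n} C)$, so the homotopy fiber of $D_G C \to D_G(\tau_{\le n} C)$ is the next filtration term, which carries a finite filtration whose subquotients are built from spectra of the form $EG_+ \wedge_G (C')^{\wedge i} \wedge (\tau_{\le n} C)^{\wedge p^k - i}$ with $i \ge 1$. Each such smash product has at least one smash factor of the $n$-connected spectrum~$C'$, the remaining factors being bounded below by the lower bound of~$C$, and passing to homotopy orbits and finite wedge sums does not lower connectivity; hence this homotopy fiber is $(n + c_0)$-connected for a constant $c_0$ depending only on~$C$ and~$p^k$. Letting $n \to \infty$, the homotopy fiber of $D_G C \to \holim_n D_G(\tau_{\le n} C)$ is a homotopy limit of a tower of spectra whose connectivities tend to infinity, hence contractible, and the statement about $W(G,-)$ follows after reinstating the suspensions and interchanging homotopy limits.

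The step I expect to be the crux is precisely the equivalence $D_G C \simeq \holim_n D_G(\tau_{\le n} C)$. The functor $D_G = EG_+ \wedge_G (-)^{\wedge p^k}$ is polynomial of degree $p^k > 1$ and does not commute with homotopy limits in general, so this equivalence genuinely fails without a connectivity hypothesis; the real content is extracting from the extended power filtration the uniform connectivity bound on the fibers that grows linearly in~$n$. Everything else is formal: mapping spectra, derived $p$-completion, and nested homotopy limits all commute with homotopy limits, and the Postnikov tower of a bounded below spectrum converges.
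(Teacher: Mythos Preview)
Your proof is correct and follows essentially the same approach as the paper. For $V(G,-)^\wedge_p$ both you and the paper reduce to $F(\Sigma^2\cT_k,-)$ via Theorem~\ref{thm:VGB}(a) and use that mapping spectra commute with homotopy limits; for $W(G,-)$ both arguments fix~$m$, show that the connectivity of $\Sigma^{2m} D_G(\Sigma^{-2m} B) \to \Sigma^{2m} D_G(\Sigma^{-2m}\tau_{\le n}B)$ tends to infinity with~$n$, and then interchange the two homotopy limits. The only difference is that the paper asserts the connectivity growth in one sentence, whereas you justify it explicitly by invoking the extended-power filtration from \cite{BMMS86}*{Prop.~II.3.11} and bounding the connectivity of each subquotient---this is a welcome elaboration of exactly the step the paper leaves to the reader.
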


\begin{proof}
It is clear that
$$
F(\Sigma^2 \cT_k, B) \overset{\simeq}\longto
        \holim_n F(\Sigma^2 \cT_k, \tau_{\le n} B)
$$
is an equivalence, and, by Theorem~\ref{thm:VGB}, this implies the
corresponding result for $V(G, -)^\wedge_p$.

For the second claim, note that since~$B$ is bounded below, the
connectivity of
$$
\Sigma^{2m} D_G(\Sigma^{-2m} B)
	\longto \Sigma^{2m} D_G(\Sigma^{-2m} \tau_{\le n} B)
$$
increases to infinity with~$n$, for each fixed $m\ge0$.
Hence
$$
\Sigma^{2m} D_G(\Sigma^{-2m} B) \overset{\simeq}\longto
	\holim_n \Sigma^{2m} D_G(\Sigma^{-2m} \tau_{\le n} B)
$$
is an equivalence.  The result follows by passing to the homotopy
limit over~$m$ and interchanging the order of the two homotopy limits.
\end{proof}

\begin{theorem} \label{thm:deltaforBeqce}
Let $G = (C_p)^k$ with $k\ge1$, suppose that
Theorem~\ref{thm:segal-smash-pgroup} holds for each proper subquotient
of~$G$, and let $B$ be a flat orthogonal spectrum with $\pi_*(B)$
bounded below and $H_*(B; \bF_p)$ of finite type.  Then
$$
\delta^\wedge_p \: V(G, B)^\wedge_p \to W(G, B)^\wedge_p
$$
is an equivalence.
\end{theorem}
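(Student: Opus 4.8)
The plan is to bootstrap from the toehold case $B = H = H\bF_p$, i.e.\ Theorem~\ref{thm:deltaforHeqce}, to all bounded below spectra with mod~$p$ homology of finite type, following the inductive strategy of Nikolaus--Scholze~\cite{NS18}*{\S III.1}. The constructions $V(G,-)$, $W(G,-)$ and the natural transformation $\delta$ between them make sense for arbitrary bounded below spectra, not only flat orthogonal ones, and Propositions~\ref{prop:htpycofibseq}, \ref{prop:Postnikov} together with Theorem~\ref{thm:deltaforHeqce} are already statements at that level. So let $\mathcal{C}$ be the category of bounded below spectra with $\bF_p$-homology of finite type, and call an object $B$ of $\mathcal{C}$ \emph{good} if $\delta^\wedge_p \colon V(G,B)^\wedge_p \to W(G,B)^\wedge_p$ is an equivalence. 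By Proposition~\ref{prop:htpycofibseq} (and the evident vanishing on a point) both $V(G,-)^\wedge_p$ and $W(G,-)$ are exact, hence additive, functors out of $\mathcal{C}$; together with the two-out-of-three property this shows that the class of good spectra is closed under cofibers and under (de)suspension. It contains every finite wedge of copies of $H$, by Theorem~\ref{thm:deltaforHeqce}, and, by Proposition~\ref{prop:Postnikov} --- using that $p$-completion commutes with these sequential homotopy limits, since smashing with the finite spectra $S/p^k$ does --- it is closed under Postnikov limits $B = \holim_n \tau_{\le n} B$ of objects $B$ of $\mathcal{C}$. Throughout one checks routinely, using the finite type hypothesis, that every spectrum produced below --- Postnikov sections, the Eilenberg--MacLane spectra $\Sigma^j H\pi_j(B)$, and the smash products $-\wedge S/p$ --- again lies in $\mathcal{C}$.

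First I would reduce to bounded spectra: by closure under Postnikov limits it suffices to prove that $\tau_{\le n} B$ is good for each $n$, and $\tau_{\le n} B$ is bounded both below and above. Such a spectrum admits a finite Postnikov filtration with successive cofiber sequences $\Sigma^j H\pi_j(B) \longto \tau_{\le j} B \longto \tau_{\le j-1} B$, so by closure under cofibers and induction on the number of nonzero homotopy groups it suffices to treat $B = \Sigma^j HM$ with $M = \pi_j(B)$. By closure under suspension we may take $j = 0$, so it remains to prove that $HM$ is good, where --- by the finite type hypothesis --- $M/p$ and $M[p]$ are finite-dimensional over $\bF_p$.

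The endgame is a mod~$p$ reduction. The spectrum $HM \wedge S/p$ is an $H\bZ$-module spectrum with homotopy $M/p$ in degree $0$ and $M[p]$ in degree $1$, hence splits as $H(M/p) \vee \Sigma H(M[p])$, a \emph{finite} wedge of copies of $H$; so $HM \wedge S/p$ is good. Apply $\delta^\wedge_p$ to the cofiber sequence $HM \overset{p}\longto HM \longto HM \wedge S/p$ and take the cofiber of the resulting map of cofiber sequences; by additivity $V(G,-)^\wedge_p$ and $W(G,-)$ send multiplication by $p$ to multiplication by $p$, so we obtain a cofiber sequence $C \overset{p}\longto C \longto C'$, where $C$ and $C'$ are the cofibers of $\delta^\wedge_p$ for $HM$ and for $HM\wedge S/p$ respectively. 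Since $HM \wedge S/p$ is good, $C' \simeq *$, so $p \colon C \to C$ is an equivalence. But $C$ is the cofiber of a map between the $p$-complete spectra $V(G,HM)^\wedge_p$ and $W(G,HM)^\wedge_p$, and $p$-completion is exact, so $C$ is itself $p$-complete; a $p$-complete spectrum on which multiplication by $p$ is an equivalence is contractible. Hence $C \simeq *$, i.e.\ $HM$ is good, and the induction is complete.

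Apart from the routine verifications that the reductions keep us inside $\mathcal{C}$, there is no real obstacle: the single substantive ingredient, the equivalence for $H\bF_p$, is already in hand as Theorem~\ref{thm:deltaforHeqce}, and the rest is bookkeeping with exact functors and Postnikov towers. It is worth noting where the finite type hypothesis enters: it is used precisely to ensure that $M/p$ and $M[p]$ are \emph{finite}-dimensional, so that $HM \wedge S/p$ is a finite --- rather than an infinite --- wedge of copies of $H$; an infinite wedge of copies of $H\bF_p$ does not lie in $\mathcal{C}$, and $W(G,-)$ does not commute with infinite wedges (the extended power of a wedge has cross terms), so the reduction genuinely breaks down without it.
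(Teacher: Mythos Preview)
Your argument is correct and uses exactly the same ingredients as the paper --- Theorem~\ref{thm:deltaforHeqce} together with Propositions~\ref{prop:htpycofibseq} and~\ref{prop:Postnikov} --- to reduce to the case $B = H\bF_p$. The only difference is the order of the two reductions: the paper first smashes $B$ with $S/p$ (so that every homotopy group becomes a finite $p$-group) and then climbs the Postnikov tower of $B/p$, whereas you climb the Postnikov tower of $B$ first and perform the mod~$p$ reduction only at the final Eilenberg--MacLane stage; both orderings work, and your endgame ($C$ is $p$-complete with $p$ acting invertibly, hence contractible) is a clean way to finish.
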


\begin{proof}
It suffices to prove that $\delta/p \: V(G, B)/p \to W(G, B)/p$ is
an equivalence.  In view of Proposition~\ref{prop:htpycofibseq}
and the homotopy cofiber sequence
$$
B \overset{p}\longto B \longto B/p \,,
$$
this is equivalent to checking that $\delta^\wedge_p$ for $B/p$ is an
equivalence.  Each Postnikov section $\tau_{\le n}(B/p)$ has only finitely
many nonzero homotopy groups, each of order a finite power of~$p$.
Hence the result for $\tau_{\le n}(B/p)$ follows by induction from
Theorem~\ref{thm:deltaforHeqce} and Proposition~\ref{prop:htpycofibseq}.
The result for $B/p$ then follows from Proposition~\ref{prop:Postnikov}.
\end{proof}

\begin{proof}[Proof of Theorem~\ref{thm:segal-smash-pgroup} for $G$
elementary abelian]
Let $G = (C_p)^k$ with $k\ge1$, suppose that
Theorem~\ref{thm:segal-smash-pgroup} holds for each proper subquotient
of~$G$, and let $B$ be a bounded below flat orthogonal spectrum with
$H_*(B; \bF_p)$ of finite type.  By Theorem~\ref{thm:deltaforBeqce},
the map $\delta^\wedge_p \: V(G, B)^\wedge_p \to W(G, B)^\wedge_p$
is an equivalence.  Hence $F(S^{\infty\rho}, B^{\wedge G})^G$ becomes
trivial after $p$-completion, by the homotopy cofiber sequence in
Definition~\ref{def:VWdelta}.  Therefore $\gamma \: (B^{\wedge G})^G
\to (B^{\wedge G})^{hG}$ becomes an equivalence after $p$-completion,
by Proposition~\ref{prop:SinftyrhoBGG}.
\end{proof}

\section{The finite group case}
\label{sec:finite}

We now assume that $G$ is any finite group and that $B$ is a flat
orthogonal spectrum that is bounded below and of finite type.
We aim to prove Theorem~\ref{thm:segal-smash-finite} concerning
the $G$-spectrum $X = B^{\wedge G}$.

Let $p$ be a prime, and let $K \subset G$ be a $p$-Sylow subgroup.
By Proposition~\ref{prop:resgeomfix}(a,d), the restriction
$\res^G_K(B^{\wedge G}) = C^{\wedge K}$ is the $K$-fold smash power
of $C \cong B^{\wedge G/K}$, which is also bounded below and of
finite type.  In particular, $H_*(C; \bF_p)$ is of finite type, so by
Theorem~\ref{thm:segal-smash-pgroup} the map
$$
\gamma \: (C^{\wedge K})^K \longto (C^{\wedge K})^{hK}
$$
becomes an equivalence after $p$-completion.  

The $K$-spectrum $C^{\wedge K}$ will not generally be
split, so to translate between $p$-adic and $I(K)$-adic
completion we need a replacement for~\cite{MM82}*{Prop.~14}.
Our Proposition~\ref{prop:peqIKeq} will be deduced from the following
result of Ragnarsson, relating the spectrum level $I(K)$-completion of
Greenlees--May~\cite{GM92}*{\S1} to $p$-completion.

\begin{theorem}[\cite{Rag11}*{Thm.~C}]
\label{thm:Rag}
Let $K$ be a $p$-group and $Y$ a bounded below $K$-spectrum.  Then there
is a natural homotopy cofiber sequence
$$
(EK_+ \wedge Y)^K
	\longto (Y^\wedge_{I(K)})^K
	\longto ((\wEK \wedge Y)^K)^\wedge_p \,.
$$
\end{theorem}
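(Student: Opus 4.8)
The plan is to deduce Theorem~\ref{thm:Rag} from the isotropy separation cofiber sequence together with a comparison between the spectrum level $I(K)$-completion and $p$-completion on the "free" and "geometric fixed point" pieces. We have the standard homotopy cofiber sequence of $K$-spectra $EK_+ \wedge Y \to Y \to \wEK \wedge Y$, and applying $(-)^K$ and the spectrum level $I(K)$-completion functor termwise gives a diagram of homotopy cofiber sequences. So the first step is to record that $((EK_+ \wedge Y)^\wedge_{I(K)})^K \to (Y^\wedge_{I(K)})^K \to ((\wEK \wedge Y)^\wedge_{I(K)})^K$ is a homotopy cofiber sequence, using that $(-)^K$ preserves cofiber sequences and that $I(K)$-completion (being a Bousfield localization, or a homotopy limit of a tower of smash products with Koszul-type complexes, cf.~Greenlees--May~\cite{GM92}*{\S1}) also preserves them.

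**Next I would** identify the two outer terms. For the free piece, $(EK_+ \wedge Y)^K \simeq EK_+ \wedge_K Y$ is already $I(K)$-complete up to the relevant identification — more precisely, the point is that on $K$-spectra of the form $EK_+ \wedge Y$ the $I(K)$-completion map is an equivalence, because $EK_+$ is built from free $K$-cells $K_+$, and $F(K/K_+, -)$-style Burnside module considerations show $I(K)$ acts the way $p$ does after smashing with $EK_+$; in any case Greenlees--May identify $(EK_+ \wedge Y)^\wedge_{I(K)} \simeq EK_+ \wedge Y$ as $K$-spectra (the Tate construction vanishes on the free part), so $((EK_+ \wedge Y)^\wedge_{I(K)})^K \simeq (EK_+ \wedge Y)^K$. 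For the geometric piece, the key input is that $\wEK$ is a $K$-CW complex built entirely out of cells of the form $K/L_+$ with $L$ \emph{nontrivial}; on such isotropy, the Burnside ring augmentation ideal $I(K)$ and the prime $p$ generate the same topology, which is precisely the content of Ragnarsson's analysis in~\cite{Rag11} adapting~\cite{MM82}*{Prop.~14} to the non-split setting. Concretely this yields $((\wEK \wedge Y)^\wedge_{I(K)})^K \simeq ((\wEK \wedge Y)^K)^\wedge_p$, i.e.\ on the singular part $I(K)$-completion agrees with ordinary $p$-completion of the fixed point spectrum.

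**The hard part will be** the geometric-fixed-point identification: one must show that $I(K)$-adic completion of the $K$-fixed points of $\wEK \wedge Y$ coincides with $p$-completion. This is where the $p$-group hypothesis and the bounded-below hypothesis on $Y$ enter. The strategy is to filter $\wEK$ by its skeleta $\wEK^{(n)}$; each subquotient is a wedge of $K$-cells $K/L_+ \wedge S^n$ with $L \neq e$, and $(K/L_+ \wedge \wEK \wedge Y)^K$-type terms are controlled by transfers from proper subgroups, over which one argues by induction on the order of $K$. At the bottom of the induction (or directly, using that $K$ is a $p$-group so every nontrivial subgroup $L$ surjects onto a $C_p$-quotient) one invokes that the class of $I(K)$ in the Burnside ring maps to a $p$-power times a unit after restriction--induction along nontrivial subgroups, so the $I(K)$-adic and $p$-adic filtrations on $\pi_*((\wEK \wedge Y)^K)$ are intertwined and both give the same completion. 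Assembling the three pieces: the two outer vertical maps in the map from the original cofiber sequence to its $I(K)$-completion are identified with the inclusion $EK_+\wedge Y \to$ itself and with $p$-completion on $(\wEK\wedge Y)^K$, and the resulting cofiber sequence is exactly the asserted $(EK_+ \wedge Y)^K \to (Y^\wedge_{I(K)})^K \to ((\wEK \wedge Y)^K)^\wedge_p$, with naturality inherited from naturality of the skeletal filtration, isotropy separation, and the completion functors.

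**Finally**, I note that this is stated here as a black box — the proof is Ragnarsson's — so in the paper the right move is simply to cite \cite{Rag11}*{Thm.~C} and proceed; the sketch above is how one reconstructs it, and what one would verify if adapting the argument is that the non-split hypothesis never gets used except through the Burnside-module bookkeeping on the singular part, which Ragnarsson handles by working with the Segal--tom~Dieck splitting replaced by its stable analogue. I would then use this cofiber sequence together with the $p$-completed equivalence $\gamma\colon (C^{\wedge K})^K \to (C^{\wedge K})^{hK}$ from Theorem~\ref{thm:segal-smash-pgroup} to prove Proposition~\ref{prop:peqIKeq}, comparing the $\wEK$-term with $F(EK_+, C^{\wedge K})^K$.
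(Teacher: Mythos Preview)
The paper does not prove this theorem at all: it is stated with attribution \cite{Rag11}*{Thm.~C} and used as a black box in the proof of Proposition~\ref{prop:peqIKeq}. You correctly identify this in your final paragraph, so there is no discrepancy to report---your sketch simply goes beyond what the paper does.

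That said, your outline of Ragnarsson's argument is broadly on the right track (isotropy separation, then identify the $I(K)$-completion on the free and singular parts separately), but a couple of the quick justifications are imprecise. The claim that $EK_+ \wedge Y$ is already $I(K)$-complete is not because ``the Tate construction vanishes on the free part''; rather, it is because $EK_+$ is built from free cells $K_+$, and on induced $K$-spectra the ideal $I(K)$ acts nilpotently (restriction to the trivial subgroup kills $I(K)$), so the $I(K)$-adic tower stabilizes. On the singular side, $\wEK$ is not literally built from cells $K/L_+$ with $L$ nontrivial---it is the cofiber of $EK_+ \to S^0$---but the relevant point is that after smashing with $\wEK$ one is in the localizing subcategory generated by such cells, and for a $p$-group $K$ the ideals $I(K)$ and $(p)$ have the same radical in $A(K)$, which is what forces $I(K)$-completion to agree with $p$-completion there. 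These are minor wording issues in what is, appropriately, only a sketch of a cited result.
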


\begin{proposition} \label{prop:peqIKeq}
Let $K$ be a $p$-group and $Y$ a bounded below $K$-spectrum with
$\pi_*(Y)$ of finite type.  Suppose that $\gamma \: Y^K \to Y^{hK}$
becomes an equivalence after $p$-completion.  Then
$$
\xi^* \: (Y^\wedge_{I(K)})^K \overset{\simeq}\longto Y^{hK}
$$
is an equivalence.
\end{proposition}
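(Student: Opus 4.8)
The plan is to derive this from Ragnarsson's cofiber sequence, Theorem~\ref{thm:Rag}, by comparing it with the analogous isotropy separation sequence for the Borel completion $F(EK_+, Y)$.  Recall that $\gamma$ factors as $Y^K \overset{\iota}\longto (Y^\wedge_{I(K)})^K \overset{\xi^*}\longto Y^{hK}$, where $\xi \: Y^\wedge_{I(K)} \to F(EK_+, Y)$ is a map of $K$-spectra extending the Borel completion map $\gamma' \: Y \to F(EK_+, Y)$ over the spectrum level $I(K)$-completion map~$\iota$, cf.~\cite{GM92}*{\S4}.  First I would apply $(-)^K$ to the isotropy separation cofiber sequences $EK_+ \wedge X \to X \to \wEK \wedge X$ for the $K$-spectra $X = Y$, $Y^\wedge_{I(K)}$ and $F(EK_+, Y)$, and connect them by the maps induced by $\iota$ and~$\xi$.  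Both $I(K)$-completion and Borel completion become equivalences after smashing with $EK_+$ --- for $I(K)$-completion this can be extracted from Theorem~\ref{thm:Rag} itself by taking $Y$ to be free, using $\wEK \wedge EK_+ \simeq *$ --- so the maps out of $(EK_+ \wedge Y)^K$ into the other two ``free'' terms are equivalences.  Writing $Y^{tK} = (\wEK \wedge F(EK_+, Y))^K$ for the associated Tate construction, the sequence for $X = F(EK_+, Y)$ becomes $(EK_+ \wedge Y)^K \to Y^{hK} \to Y^{tK}$, while Theorem~\ref{thm:Rag} amounts to identifying the sequence for $X = Y^\wedge_{I(K)}$ with Ragnarsson's, i.e.\ identifying $(\wEK \wedge Y^\wedge_{I(K)})^K$ with $((\wEK \wedge Y)^K)^\wedge_p$ compatibly, so that the map induced by~$\iota$ from $(\wEK \wedge Y)^K$ becomes the $p$-completion map.

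Granting this, the five lemma shows that $\xi^*$ is an equivalence if and only if the map on cofibers $\phi \: ((\wEK \wedge Y)^K)^\wedge_p \to Y^{tK}$ is an equivalence.  The next step is to identify $\phi$: precomposed with the $p$-completion map $(\wEK \wedge Y)^K \to ((\wEK \wedge Y)^K)^\wedge_p$, it equals the natural map $\beta \: (\wEK \wedge Y)^K \to Y^{tK}$ induced on cofibers by $\gamma'$, because $\xi \circ \iota = \gamma'$ forces the composite of the two comparisons of isotropy separation sequences to realize the comparison along $\gamma = (\gamma')^K$.  Since $\gamma$ becomes an equivalence after $p$-completion and the ``free'' terms already match, $\beta$ becomes an equivalence after $p$-completion; as the source of $\phi$ is already $p$-complete, this says exactly that $\phi^\wedge_p \: ((\wEK \wedge Y)^K)^\wedge_p \to (Y^{tK})^\wedge_p$ is an equivalence.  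Thus the whole statement reduces to the claim that $Y^{tK}$ is $p$-complete: given that, $(Y^{tK})^\wedge_p = Y^{tK}$, so $\phi = \phi^\wedge_p$ is an equivalence, and hence so is $\xi^*$.

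The hard part is this $p$-completeness of $Y^{tK}$, and it is precisely where the hypotheses that $K$ is a $p$-group and that $\pi_*(Y)$ is of finite type are used.  Here the plan is to run the Tate spectral sequence
$$
E_2^{s,t} = \hat{H}^{-s}(K; \pi_t Y) \Longrightarrow \pi_{t-s}(Y^{tK}) \,,
$$
which converges strongly because $Y$ is bounded below.  Tate cohomology of the $p$-group~$K$ is annihilated by $|K|$, a power of~$p$, and is finitely generated in every bidegree since $\pi_*(Y)$ is (note that, unlike ordinary cohomology, even $\hat{H}^0(K; \bZ) = \bZ/|K|$ is finite), so each $E_2^{s,t}$ is a finite abelian $p$-group.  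Consequently, using boundedness below to see that the filtration of $\pi_n(Y^{tK})$ reaches the whole group at a finite stage, each $\pi_n(Y^{tK})$ is a complete, exhaustive and Hausdorff filtered group with finitely many finite $p$-group subquotients above that stage, hence a surjective inverse limit of finite abelian $p$-groups; such a group is derived $p$-complete, and a spectrum whose homotopy groups are all of this form is $p$-complete.  This gives the claim, and completes the argument.  (If one prefers to avoid the Tate spectral sequence, one can instead reduce via the Postnikov tower of~$Y$ to the case $Y = \Sigma^n HM$ with $M$ finitely generated, where $Y^{tK}$ visibly has degreewise finite $p$-group homotopy.)
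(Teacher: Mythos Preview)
Your argument is correct and follows essentially the same route as the paper: compare the isotropy separation sequences for $Y$, for Ragnarsson's completion, and for $F(EK_+,Y)$, reduce via the matching free terms to the map on singular cofibers, and finish by observing that $Y^{tK}$ is already $p$-complete. The only real difference is cosmetic: the paper quotes Ragnarsson's cofiber sequence directly as the middle row rather than first recasting it as the isotropy separation of $Y^\wedge_{I(K)}$, and it dispatches the $p$-completeness of $Y^{tK}$ in one line by citing~\cite{GM95}, whereas you spell out the Tate spectral sequence argument.
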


\begin{proof}
For brevity, let $Z = F(EK_+, Y)$.  Note that the $K$-Tate construction
$$
Y^{tK} = (\wEK \wedge Z)^K = (\wEK \wedge F(EK_+, Y))^K \,,
$$
is already $p$-complete, since $K$ is a $p$-group and $\pi_*(Y)$
is bounded below and of finite type, cf.~\cite{GM95}.  Using
Theorem~\ref{thm:Rag} we have vertical maps
$$
\xymatrix{
(EK_+ \wedge Y)^K \ar[r] \ar@{=}[d]
	& Y^K \ar[r] \ar[d]^-{\iota}
	& (\wEK \wedge Y)^K \ar[d]^-{\tilde\iota} \\
(EK_+ \wedge Y)^K \ar[r] \ar[d]_-{\simeq}
	& (Y^\wedge_{I(K)})^K \ar[r] \ar[d]^-{\xi^*}
	& ((\wEK \wedge Y)^K)^\wedge_p \ar[d]^-{\tilde\xi^*} \\
(EK_+ \wedge Z)^K \ar[r]
	& Z^K \ar[r]
	& (\wEK \wedge Z)^K
}
$$
of horizontal homotopy cofiber sequences.  The left hand vertical map
is always an equivalence, and the middle vertical composite~$\gamma =
\xi^* \iota$ becomes an equivalence after $p$-completion by assumption.
Hence also the right hand composite $\tilde\gamma = \tilde\xi^* \tilde\iota$
becomes an equivalence after $p$-completion.  But $\tilde\gamma^\wedge_p
= \tilde\xi^*$, since $(\wEK \wedge Z)^K$ is $p$-complete, so
$\tilde\xi^*$ is an equivalence.  It follows that $\xi^*$ is an equivalence,
as claimed.
\end{proof}

To verify the equivariant bounded below hypothesis for $Y = C^{\wedge K}$,
and a finite type hypothesis needed for~\cite{GM92}*{Thm.~1.6(ii)},
we can use the following variant of the folklore result proved
in~\cite{Rag11}*{Prop.~3.1}.

\begin{lemma} \label{lem:equivbddbelow}
Let $K$ be a finite group and $Y$ a $K$-spectrum.  Let $H$ range over all
subgroups of~$K$.  Then every fixed point spectrum $Y^H$ is bounded below
(and of finite type) if and only if every geometric fixed point spectrum
$\Phi^H(Y)$ is bounded below (and of finite type).
\end{lemma}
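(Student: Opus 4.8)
The plan is to run an induction over the poset of subgroups of $K$, using the standard isotropy separation cofiber sequence that relates the genuine fixed points $Y^H$ to the geometric fixed points $\Phi^H(Y)$ and the fixed points of proper subgroups. The two directions of the biconditional will be handled simultaneously by this induction. First I would recall the basic homotopy cofiber sequence
$$
(E\mathcal{P}_+ \wedge Y)^H \longto Y^H \longto (\wEP \wedge Y)^H = \Phi^H(Y) \,,
$$
where $\mathcal{P}$ here denotes the family of proper subgroups of~$H$, so that $\Phi^H(Y) \simeq (\wEP \wedge Y)^H$ by definition of the geometric fixed points, and $(E\mathcal{P}_+ \wedge Y)^H$ is built out of the homotopy orbit spectra $EW_H(L)_+ \wedge_{W_H(L)} Y^L$ for $L$ ranging over proper subgroups of~$H$ (equivalently, a homotopy colimit over the orbit category of the family $\mathcal{P}$ of the spectra $Y^L$). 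The key point is that each such building block is bounded below (resp. of finite type) as soon as the relevant $Y^L$ with $L \subsetneq H$ are, since a finite homotopy colimit of bounded below (resp. finite type) spectra over a finite indexing category — here the orbit category has finitely many objects and finite mapping sets, as $K$ is finite — is again bounded below (resp. of finite type).

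Next I would set up the induction on the order of~$H$. For the base case $H = \{e\}$ we have $Y^{\{e\}} = \Phi^{\{e\}}(Y)$, the underlying non-equivariant spectrum, so the statement is a tautology. For the inductive step, assume the equivalence of the two conditions for all proper subgroups of~$H$; then, in the ``geometric $\Rightarrow$ genuine'' direction, from $\Phi^L(Y)$ bounded below (resp. finite type) for all $L \subseteq H$ and the inductive hypothesis applied at each proper $L \subsetneq H$, we get $Y^L$ bounded below (resp. finite type) for all proper $L$, hence $(E\mathcal{P}_+ \wedge Y)^H$ has the same property, and then the cofiber sequence above forces $Y^H$ to inherit it. In the ``genuine $\Rightarrow$ geometric'' direction, from $Y^L$ bounded below (resp. finite type) for all $L \subseteq H$ — in particular for all proper $L$ — we likewise get $(E\mathcal{P}_+ \wedge Y)^H$ bounded below (resp. finite type), and the cofiber sequence identifies $\Phi^H(Y)$ with the cofiber of a map between two spectra with that property, so $\Phi^H(Y)$ has it too. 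Running the induction up to $H = K$ gives both implications for all subgroups simultaneously.

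The main obstacle I anticipate is bookkeeping the precise structure of $(E\mathcal{P}_+ \wedge Y)^H$ and justifying that it is bounded below (resp. of finite type): one needs that the homotopy colimit over the (finite) orbit category of the family of proper subgroups of the diagram $L \mapsto EW_H(L)_+ \wedge_{W_H(L)} Y^L$ computes $(E\mathcal{P}_+ \wedge Y)^H$, and that finiteness is preserved. For boundedness below this is straightforward since homotopy colimits do not decrease connectivity below the minimum of the pieces; for finite type one uses that each $Y^L$ is assumed of finite type, that smashing with a finite classifying-space skeleton and taking homotopy orbits by a finite group preserves finite type $\bmod$ each prime, and that a finite homotopy colimit of finite type spectra is of finite type. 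Since \cite{Rag11}*{Prop.~3.1} already records the folklore bounded-below version, I would simply cite that for the connectivity statement and add the parenthetical finite-type refinement by the same homotopy-colimit argument, remarking that in our applications $K$ is a $p$-group, $Y = C^{\wedge K}$, and all the relevant geometric fixed points $\Phi^H(Y) \simeq C^{\wedge H\text{-orbits}}$ are visibly bounded below and of finite type by Proposition~\ref{prop:resgeomfix}.
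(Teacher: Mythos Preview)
Your approach is essentially the same as the paper's: induct over subgroups using the isotropy separation cofiber sequence
\[
(E\cP_+ \wedge Y)^H \longto Y^H \longto \Phi^H(Y),
\]
and show that the left-hand term inherits the relevant finiteness from the $Y^L$ with $L \subsetneq H$. The paper streamlines slightly by first reducing ``bounded below'' to ``connective'', and then analyzes $(E\cP_+ \wedge Y)^H$ via the $H$-CW structure on $E\cP$: the filtration subquotients are wedges of non-negative suspensions $\Sigma^n Y^L$ (coming from cells $H/L \times D^n$ with $L \in \cP$), with suspension degrees going to infinity, which immediately gives both connectivity and finite type.

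One point to correct: your description of $(E\cP_+ \wedge Y)^H$ as ``built out of the homotopy orbit spectra $EW_H(L)_+ \wedge_{W_H(L)} Y^L$'' is not right for a general $H$-spectrum~$Y$. That decomposition is the tom Dieck splitting, valid when $Y = \Sigma^\infty X$ is a suspension spectrum, but it fails in general (already for $H = C_p$ and $Y$ an Eilenberg--Mac Lane spectrum the genuine fixed points need not split this way). The cellular filtration with subquotients $\Sigma^n Y^L$ is what actually holds for arbitrary~$Y$, and it is all you need; your later paragraph about ``finite classifying-space skeleta'' is then unnecessary. With that adjustment the argument goes through and matches the paper's.
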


\begin{proof}
It suffices to prove this with ``bounded below'' replaced by
``connective''.  Let~$\cP$ be the family of proper subgroups of~$K$. By
induction on~$K$ we may assume that $Y^H$ and $\Phi^H(Y)$ are connective
(and of finite type) for all $H \in \cP$.  In the homotopy cofiber
sequence
$$
(E\cP_+ \wedge Y)^K \longto Y^K \longto \Phi^K(Y)
$$
the left hand term is built from non-negative suspensions of $(K/H_+
\wedge Y)^K \simeq Y^H$, where $H \in \cP$ (and the suspension degrees
increase to infinity), hence is connective (and of finite type).
Thus $Y^K$ is connective (and of finite type) if and only if $\Phi^K(Y)$
is connective (and of finite type).
\end{proof}

\begin{proof}[Proof of Theorem~\ref{thm:segal-smash-finite}]
We keep the notation from the beginning of this section.
By Proposition~\ref{prop:resgeomfix} each geometric fixed
point spectrum $\Phi^H(C^{\wedge K})$ is bounded below and
of finite type,
so by Lemma~\ref{lem:equivbddbelow} the $K$-spectrum $Y =
C^{\wedge K}$ is bounded below and of finite type.
Theorem~\ref{thm:segal-smash-pgroup} for~$K$ and~$C$ and
Proposition~\ref{prop:peqIKeq} then prove that
$$
\xi^* \: (Y^\wedge_{I(K)})^K \overset{\simeq}\longto Y^{hK}
$$
is an equivalence.  Moreover, by~\cite{GM92}*{Thm.~1.6(ii)},
$$
\pi_*((Y^\wedge_{I(K)})^K) \cong \pi_*(Y^K)^\wedge_{I(K)}
$$
is given algebraically by $I(K)$-adic completion.  Hence
Theorem~\ref{thm:segal-smash-finite} holds for the $K$-spectrum $Y$ given
by the restriction of the $G$-spectrum $X = B^{\wedge G}$.  The algebraic
part of this statement is the completion conjecture for $Y$, in the
terminology of May--McClure~\cite{MM82}*{p.~217}.  Since this applies
for all Sylow subgroups~$K$ of~$G$, the completion conjecture also holds
for the $G$-spectrum $X$ by~\cite{MM82}*{Thm.~13}, so that
$$
\pi_*(\gamma)^\wedge_{I(G)} \: \pi_*(X^G)^\wedge_{I(G)}
	\overset{\cong}\longto \pi_*(X^{hG})
$$
is an isomorphism.  Since the $G$-spectrum $X$ is also bounded below
and of finite type we have
$$
\pi_*((X^\wedge_{I(G)})^G) \cong \pi_*(X^G)^\wedge_{I(G)} \,,
$$
by a second appeal to~\cite{GM92}*{Thm.~1.6(ii)}.  Hence
$$
\xi^* \: (X^\wedge_{I(G)})^G \overset{\simeq}\longto X^{hG}
$$
is an equivalence, which proves Theorem~\ref{thm:segal-smash-finite}
for~$G$ and~$B$.
\end{proof}

\begin{bibdiv}
\begin{biblist}

\bib{AGM85}{article}{
   author={Adams, J. F.},
   author={Gunawardena, J. H.},
   author={Miller, H.},
   title={The Segal conjecture for elementary abelian $p$-groups},
   journal={Topology},
   volume={24},
   date={1985},
   number={4},
   pages={435--460},
   issn={0040-9383},
   review={\MR{816524}},
   doi={10.1016/0040-9383(85)90014-X},
}

\bib{Ber15}{article}{
   author={Bergsaker, H{\aa}kon Schad},
   title={Higher Singer constructions and the Segal conjecture for
   	smash powers},
   date={2015},
   note={Norwegian topology meeting, Oslo, December 3rd 2015},
}

\bib{BBLNR14}{article}{
   author={B\"{o}kstedt, Marcel},
   author={Bruner, Robert R.},
   author={Lun\o e-Nielsen, Sverre},
   author={Rognes, John},
   title={On cyclic fixed points of spectra},
   journal={Math. Z.},
   volume={276},
   date={2014},
   number={1-2},
   pages={81--91},
   issn={0025-5874},
   review={\MR{3150193}},
   doi={10.1007/s00209-013-1187-0},
}

\bib{BMMS86}{book}{
   author={Bruner, R. R.},
   author={May, J. P.},
   author={McClure, J. E.},
   author={Steinberger, M.},
   title={$H_\infty $ ring spectra and their applications},
   series={Lecture Notes in Mathematics},
   volume={1176},
   publisher={Springer-Verlag, Berlin},
   date={1986},
   pages={viii+388},
   isbn={3-540-16434-0},
   review={\MR{836132}},
   doi={10.1007/BFb0075405},
}

\bib{Car84}{article}{
   author={Carlsson, Gunnar},
   title={Equivariant stable homotopy and Segal's Burnside ring conjecture},
   journal={Ann. of Math. (2)},
   volume={120},
   date={1984},
   number={2},
   pages={189--224},
   issn={0003-486X},
   review={\MR{763905}},
   doi={10.2307/2006940},
}

\bib{CMP87}{article}{
   author={Caruso, J.},
   author={May, J. P.},
   author={Priddy, S. B.},
   title={The Segal conjecture for elementary abelian $p$-groups. II.
   $p$-adic completion in equivariant cohomology},
   journal={Topology},
   volume={26},
   date={1987},
   number={4},
   pages={413--433},
   issn={0040-9383},
   review={\MR{919728}},
   doi={10.1016/0040-9383(87)90040-1},
}

\bib{CHR65}{article}{
   author={Chase, S. U.},
   author={Harrison, D. K.},
   author={Rosenberg, Alex},
   title={Galois theory and Galois cohomology of commutative rings},
   journal={Mem. Amer. Math. Soc.},
   volume={52},
   date={1965},
   pages={15--33},
   issn={0065-9266},
   review={\MR{195922}},
}

\bib{GM92}{article}{
   author={Greenlees, J. P. C.},
   author={May, J. P.},
   title={Completions of $G$-spectra at ideals of the Burnside ring},
   conference={
      title={Adams Memorial Symposium on Algebraic Topology, 2},
      address={Manchester},
      date={1990},
   },
   book={
      series={London Math. Soc. Lecture Note Ser.},
      volume={176},
      publisher={Cambridge Univ. Press, Cambridge},
   },
   date={1992},
   pages={145--178},
   review={\MR{1232204}},
   doi={10.1017/CBO9780511526312.016},
}

\bib{GM95}{article}{
   author={Greenlees, J. P. C.},
   author={May, J. P.},
   title={Generalized Tate cohomology},
   journal={Mem. Amer. Math. Soc.},
   volume={113},
   date={1995},
   number={543},
   pages={viii+178},
   issn={0065-9266},
   review={\MR{1230773}},
   doi={10.1090/memo/0543},
}

\bib{HM97}{article}{
   author={Hesselholt, Lars},
   author={Madsen, Ib},
   title={On the $K$-theory of finite algebras over Witt vectors of perfect
   fields},
   journal={Topology},
   volume={36},
   date={1997},
   number={1},
   pages={29--101},
   issn={0040-9383},
   review={\MR{1410465}},
   doi={10.1016/0040-9383(96)00003-1},
}

\bib{HHR16}{article}{
   author={Hill, M. A.},
   author={Hopkins, M. J.},
   author={Ravenel, D. C.},
   title={On the nonexistence of elements of Kervaire invariant one},
   journal={Ann. of Math. (2)},
   volume={184},
   date={2016},
   number={1},
   pages={1--262},
   issn={0003-486X},
   review={\MR{3505179}},
   doi={10.4007/annals.2016.184.1.1},
}

\bib{LMS86}{book}{
   author={Lewis, L. G., Jr.},
   author={May, J. P.},
   author={Steinberger, M.},
   author={McClure, J. E.},
   title={Equivariant stable homotopy theory},
   series={Lecture Notes in Mathematics},
   volume={1213},
   note={With contributions by J. E. McClure},
   publisher={Springer-Verlag, Berlin},
   date={1986},
   pages={x+538},
   isbn={3-540-16820-6},
   review={\MR{866482}},
   doi={10.1007/BFb0075778},
}

\bib{LS82}{article}{
   author={Li, Hu Hsiung},
   author={Singer, William M.},
   title={Resolutions of modules over the Steenrod algebra and the classical
   theory of invariants},
   journal={Math. Z.},
   volume={181},
   date={1982},
   number={2},
   pages={269--286},
   issn={0025-5874},
   review={\MR{674277}},
   doi={10.1007/BF01215024},
}

\bib{LNR12}{article}{
   author={Lun\o e-Nielsen, Sverre},
   author={Rognes, John},
   title={The topological Singer construction},
   journal={Doc. Math.},
   volume={17},
   date={2012},
   pages={861--909},
   issn={1431-0635},
   review={\MR{3007679}},
}

\bib{May70}{article}{
   author={May, J. Peter},
   title={A general algebraic approach to Steenrod operations},
   conference={
      title={The Steenrod Algebra and its Applications},
      address={Proc. Conf. to Celebrate N. E. Steenrod's Sixtieth Birthday,
      Battelle Memorial Inst., Columbus, Ohio},
      date={1970},
   },
   book={
      series={Lecture Notes in Mathematics, Vol. 168},
      publisher={Springer, Berlin},
   },
   date={1970},
   pages={153--231},
   review={\MR{0281196}},
}

\bib{MM82}{article}{
   author={May, J. P.},
   author={McClure, J. E.},
   title={A reduction of the Segal conjecture},
   conference={
      title={Current trends in algebraic topology, Part 2},
      address={London, Ont.},
      date={1981},
   },
   book={
      series={CMS Conf. Proc.},
      volume={2},
      publisher={Amer. Math. Soc., Providence, R.I.},
   },
   date={1982},
   pages={209--222},
   review={\MR{686147}},
}

\bib{Nik22}{article}{
   author={Nikolaus, Thomas},
   title={Frobenius homomorphisms in higher algebra},
   date={2022},
   note={Proceedings of the ICM 2022},
}

\bib{NS18}{article}{
   author={Nikolaus, Thomas},
   author={Scholze, Peter},
   title={On topological cyclic homology},
   journal={Acta Math.},
   volume={221},
   date={2018},
   number={2},
   pages={203--409},
   issn={0001-5962},
   review={\MR{3904731}},
   doi={10.4310/ACTA.2018.v221.n2.a1},
}

\bib{PW85}{article}{
   author={Priddy, Stewart},
   author={Wilkerson, Clarence},
   title={Hilbert's Theorem 90 and the Segal conjecture for elementary
   abelian $p$-groups},
   journal={Amer. J. Math.},
   volume={107},
   date={1985},
   number={4},
   pages={775--785},
   issn={0002-9327},
   review={\MR{796902}},
   doi={10.2307/2374356},
}

\bib{QV72}{article}{
   author={Quillen, D.},
   author={Venkov, B. B.},
   title={Cohomology of finite groups and elementary abelian subgroups},
   journal={Topology},
   volume={11},
   date={1972},
   pages={317--318},
   issn={0040-9383},
   review={\MR{294506}},
   doi={10.1016/0040-9383(72)90017-1},
}

\bib{Rag11}{article}{
   author={Ragnarsson, K\'{a}ri},
   title={Completion of $G$-spectra and stable maps between classifying
   spaces},
   journal={Adv. Math.},
   volume={227},
   date={2011},
   number={4},
   pages={1539--1561},
   issn={0001-8708},
   review={\MR{2799804}},
   doi={10.1016/j.aim.2011.03.014},
}

\bib{Sin81}{article}{
   author={Singer, William M.},
   title={A new chain complex for the homology of the Steenrod algebra},
   journal={Math. Proc. Cambridge Philos. Soc.},
   volume={90},
   date={1981},
   number={2},
   pages={279--292},
   issn={0305-0041},
   review={\MR{620738}},
   doi={10.1017/S0305004100058746},
}

\bib{Sol69}{article}{
   author={Solomon, Louis},
   title={The Steinberg character of a finite group with $BN$-pair},
   conference={
      title={Theory of Finite Groups (Symposium, Harvard Univ., Cambridge,
      Mass., 1968)},
   },
   book={
      publisher={Benjamin, New York},
   },
   date={1969},
   pages={213--221},
   review={\MR{0246951}},
}

\bib{Ste62}{book}{
   author={Steenrod, N. E.},
   title={Cohomology operations},
   series={Annals of Mathematics Studies, No. 50},
   note={Lectures by N. E. Steenrod written and revised by D. B. A.
   Epstein},
   publisher={Princeton University Press, Princeton, N.J.},
   date={1962},
   pages={vii+139},
   review={\MR{0145525}},
}
\end{biblist}
\end{bibdiv}

\end{document}